\DeclareMathOperator*{\argmin}{arg\,min}
\tikzset{bullet/.style={circle,fill,inner sep=2pt}}
\title{ Tight Bounds for a Class of Data-Driven \\ Distributionally Robust Risk Measures}
\author{Derek Singh, \, Shuzhong Zhang}
\affil{Department of Industrial and Systems Engineering, University of Minnesota \\ singh644@umn.edu, \, zhangs@umn.edu}
\date{\vspace{-5ex}}
\theoremstyle{case}
\newtheorem{case}{Case}
\newtheorem{remark}{Remark}
\newtheorem*{theorem*}{Theorem}
\newtheorem*{prop*}{Proposition}
\providecommand{\keywords}[1]
{
  \small	
  \textbf{\textit{Keywords---}} #1
}
\DeclareMathAlphabet{\mathcal}{OMS}{cmsy}{m}{n}
\begin{document}

\maketitle

\begin{abstract}
This paper expands the notion of robust moment problems to incorporate distributional ambiguity using Wasserstein distance as the ambiguity measure. The classical Chebyshev-Cantelli (zeroth partial moment) inequalities, Scarf and Lo (first partial moment) bounds, and semideviation (second partial moment) in one dimension are investigated. The infinite dimensional primal problems are formulated and the simpler finite dimensional dual problems are derived. A principal motivating question is how does data-driven distributional ambiguity affect the moment bounds. Towards answering this question, some theory is developed and computational experiments are conducted for specific problem instances in inventory control and portfolio management. Finally some open questions and suggestions for future research are discussed.
\end{abstract}

\keywords{robust moment problems, Chebyshev-Cantelli inequality, Scarf and Lo bounds, partial moments, Wasserstein distance, Lagrangian duality}

\section{Introduction and Overview}
\subsection{Moment Problems}
\indent An important class of moment problems involves computing bounds for certain quantities such as shortfall probability, lost sales, semideviation, etc.\ given some known (fixed) moment information about the associated random variables. Applications of these results include problems in finance (option pricing and portfolio management), inventory control, and supply chain management. Some specific moment problems (that will be investigated in this paper) include the Chebyshev-Cantelli inequality \citep{cantelli1910intorno}, which is a type of classification probability, the Scarf \citep{scarf1958min} and Lo \citep{lo1987semi} bounds, and semideviation of portfolio returns. Some of the original work on bounds for classification probability (that a random variable belongs in a given set) under moment constraints was done by Gauss, Chebyshev \citep{chebyshev1874valeurs}, Markov \citep{markov1884certain}, and Cantelli \citep{cantelli1910intorno}. The term generalized Chebyshev inequalities refers to extensions of these early results. \par
Modern developments in this area involve the use of optimization methods and duality theory to solve particular moment problems in the class above. Pioneering this approach, Isii \citep{isii1960extrema} and Karlin \citep{studden1966tchebycheff}, independently and contemporaneously, showed the sharpness of certain Chebyshev inequalities for univariate random variables. Isii extended this work to the multivariate case \citep{isii1962sharpness}. Smith later replicated this and proposed various applications in decision analysis \citep{smith1995generalized}. Shapiro relaxed the compactness assumptions of Isii's work in his paper on conic linear problems \citep{shapiro2001duality}. More recent research (see next subsection) utilizes the tools of semidefinite programming (SDP) to investigate certain moment problems of interest in a variety of applications (finance, inventory control, supply chain management). \par
The purpose of this work is to extend the notion of a moment problem (MP) to a setting that incorporates ambiguity about the underlying distribution. We coin the term distributionally robust moment problem (DRMP) to reference such problems. These problems are formulated and solved via the framework of Wasserstein discrepancy between distributions and the corresponding (infinite dimensional) problem of moments duality results. The first steps are to define a notion of DRMPs and formulate a primal problem that measures the effect of ambiguity in distribution, as measured by $\delta$. With that in hand, next steps are to formulate and solve the simpler finite dimensional dual problems to quantify the bounds for robustness as a function of ambiguity $\delta$. 
An outline of this paper is as follows. Section 1 gives on overview of moment problems and robustness as well as a literature review. Sections 2 and 3 develop the main theoretical and computational results to characterize tight bounds for DRMPs in a univariate setting. The particular problems of interest are the Chebyshev-Cantelli inequality, the Scarf and Lo bounds, and semideviation. Section 4 applies our results to a few specific problem instances in inventory control and portfolio maangement using the methods of Sections 2 and 3. Section 5 discusses conclusions and suggestions for further research. All detailed proofs are relegated to the Appendix. \par

\subsection{Related Literature}
This subsection reviews some of the more recent research on moment problems using the tools of SDP to investigate certain moment problems in a variety of industry applications. In \citet{bertsimas2002relation}, the authors investigate best possible bounds on option prices given moment information on asset prices. For the univariate problem they show this can be done either in closed form or by using SDP. For the multivariate problem they find valid but not necessarily tight bounds using convex optimization and prove NP-hardness to find the tight bounds. In a subsequent paper, \citet{bertsimas2005optimal} develop an SDP approach to compute tight inequalities for $\Pr(X \in S)$ for a set $S$ defined by polynomial inequalities and a real random vector $X$ characterized by moment constraints up to order $k$. \citet{popescu2005semidefinite} shows how to use SDP to compute optimal bounds on expectations of functions of random variable(s) with given moment constraints and distributional properties such as unimodality, symmetry, convexity, or smoothness. \citet{zuluaga2005conic} use conic programming to study a special class of generalized Chebyshev inequalities. In particular they find optimal bounds on the expected value of piecewise polynomials where the the random variable(s) are subject to given moment constraints. Their focus is on numerical solutions using SDP. \citet{lasserre2008semidefinite} studies SDP relaxations of the generalized problem of moments (GDP) that successively converge to the optimal value. Furthermore, Lasserre explores particular applications in optimization, probability, financial economics, and optimal control. In another work, \citet{he2010bounding} develop primal-dual conic optimization methods to bound $\Pr(X \geq \mu + a)$ where $a$ is a small deviation in relation to the mean $\mu$, given first, second, and fourth moment constraints. They discuss applications to the max-cut problem. In \citet{chen2011tight}, the authors compute bounds for risk measures such as \textit{conditional value-at-risk} (CVaR) or \textit{value-at-risk} (VaR), applicable to robust portfolio selection models. The authors show that single stage problems can be solved analytically. In the particular case of no more than two additional chance constraints, they show the problem can be solved via SDP. In a later work, \citet{blanchet2018distributionally} develop closed form solutions to the distributionally robust Markowitz (mean-variance) portfolio selection problem. In this problem setting the random return vector is subject to a first moment constraint, using the Wasserstein distance metric to measure distributional ambiguity. Finally, we mention the work of \citet{singh2020distributionally} in which the authors develop analytic and SDP methods to characterize distributionally robust profit opportunities in financial markets where portfolio returns are subject to a first moment constraint (lower bound). Once again, the Wasserstein distance metric is used to measure distributional ambiguity. \par

\subsection{Notation and Definitions}
This subsection lays out the notation and definitions used to develop our framework to investigate DRMPs. The approach taken here is to start with the definitions of specific moment problems and introduce a notion of distributional ambiguity via the Wasserstein distance metric. As such, we include definitions for these terms as well as some commentary on the problem of moments duality result used to formulate the dual problems for DRMPs.
The empirical measure, $Q_n$, is defined as
$Q_n := \frac{1}{n} \sum_{i=1}^{n} \delta_{x_i}$
where $\delta_{x_i}$ is a Dirac measure. In our context, the uncertainty set for probability measures is
$U_{\delta}(Q_n) = \{Q: W_d(Q,Q_n) \leq \delta\}$
where $W_d$ is the Wasserstein metric with associated distance function $d(\cdot,\cdot)$ \citep{blanchet2018distributionally}. Define
\begin{equation*}
W_d(Q,Q') = \inf_\pi \{ \mathbb{E}^\pi [d(X,Y)]: X \sim Q, Y \sim Q' \} 
\end{equation*}
where $d(X,Y)$ is the distance between random variables $X$ and $Y$ that follow distributions $Q$ and $Q'$ respectively, and the $\inf$ is taken over all joint distributions $\pi$ with marginals $Q$ and $Q'$. This work uses the (squared) Euclidean distance function $d(x,y) = \| x-y \|^2_2 = \sum_{i=1}^m (x_i - y_i)^2$ \citep {zhao2018data}. \par
The $k^{\text{th}}$ moment of a random variable $X$ is defined as $M_k = \mathbb{E} [ X^k ]$ for $k \in \mathbb{N}_0 := \{0\} \cup \mathbb{N}$. By definition, $M_0 := 1$ and $M_1 = \mu$ denotes the mean of $X$. Certain properties are required to hold for moments of a random variable. For example, Cauchy-Schwarz inequality requires that $M^2_1 \leq M_2$. The $k^{\text{th}}$ \textit{centralized} moment of $X$ is defined as $C_k =  \mathbb{E} ( X  - \mathbb{E} [ X ] )^k$ for $k \in \mathbb{N}_0$. Note that $C_1 = 0$ and $C_2 = \sigma^2 \geq 0$ which denotes the variance of $X$. Furthermore, note that $C_2 = \sigma^2 = M_2 - M^2_1 \implies M_2 = \mu^2 + \sigma^2$. The $k^{\text{th}}$ \textit{lower partial} moment $\text{LPM}_k(\tau)$ of $X$ is defined as $\text{LPM}_k(\tau) =  \mathbb{E} [( \tau - X )^k_+]$ for $k \in \mathbb{N}_0$ \citep[cf e.g.][]{chen2011tight}. Similarly, the $k^{\text{th}}$ \textit{upper partial} moment $\text{UPM}_k(\tau)$ of $X$ is defined as $\text{UPM}_k(\tau) =  \mathbb{E} [( X - \tau )^k_+]$ for $k \in \mathbb{N}_0$. In particular, we have that $\text{LPM}_0(\tau) =  \mathbb{E} [ \mathbbm{1}_{\{X \leq \tau \}} ] = \Pr(X \leq \tau)$, and $\text{UPM}_0(\tau) =  \mathbb{E} [ \mathbbm{1}_{\{X \geq \tau \}} ] = \Pr(X \geq \tau)$. Let $\mathcal{C}_k := \{\text{LPM}_j(\tau): j \in \{0,1,...,k\}\} \cup \{\text{UPM}_j(\tau): j \in \{0,1,...,k\}\}$. Note that $\mathcal{C}_2$ will be the class of moment problems of interest in this research.


\subsubsection{Computing Wasserstein Distance in One Dimension}
This section introduces some standard results on computing Wasserstein distance between one-dimensional distributions. 
The standard results (below) are presented in the online document by \citet{LWonWD}. Wasserstein distance admits simple expressions for univariate distributions. The Wasserstein distance of order $p$ is defined over the set $\mathcal{P}$ of joint distributions $\pi$ with marginals $Q$ and $Q'$ as 
\[
W_p(Q,Q') = \left(  \inf_{\pi \in \mathcal{P}(X,Y)} \int \| x - y \|^p \: d \pi(x,y) \right)^{1/p}.
\]
Note that in this work we consider Wasserstein distance of order $p=2$. When $m=1$ there is the formula 
\[
W_p(Q,Q') = \left( \int_{0}^{1} | F^{-1}(z) - G^{-1}(z) |^p \: dz \right)^{1/p}.
\]
For empirical distributions with $n$ points, there is the formula using order statistics on $(X,Y)$
\[
W_p(Q,Q') = \left( \sum_{i=1}^{n} \| X_{(i)} - Y_{(i)} \|^p \right)^{1/p}.
\]
Additional closed forms are known for: (i) normal distributions, (ii) mappings that relate Wasserstein distance to multi-resolution $L_1$ distance. See \citet{LWonWD} for details. This concludes the brief survey of standard (closed form) results.

For discrete distributions, at least a couple of methods have been recently developed to compute approximate and/or (in the limit) exact Wasserstein distance. For distributions with finite support, and cost matrix $C$, one can compute $W(Q,Q') := \min_{\pi} \langle C, \pi \rangle$ with probability simplex constraints using linear programming (LP) methods of $O(n^3)$ complexity. An entropy regularized version of this, using regularizer $h(\pi) := \sum \pi_{i,j} \log \pi_{i,j}$ gives rise to the Sinkhorn distance
\[
W_{\epsilon}(Q,Q') := \min_{\pi} \langle C, \pi \rangle + \epsilon h(\pi)
\]
which can be solved using iterative Bregman projections via the Sinkhorn algorithm. See \citet{xie2018fast} for details.

\subsubsection{Mapping of Wasserstein Distance to (Statistical) Confidence Level}
A computable, approximate mapping of Wasserstein distance $\delta$ to (statistical) confidence level $\beta = 1 - \alpha$ can be constructed via the relation
\begin{equation}\label{R1}
	\Pr( W_2(Q,Q_n) \geq \delta) \lesssim \exp{\bigg(-n \frac{8r -2 \sqrt{16r^2+16r\delta+24r+12\delta+9}+4\delta+6}{3+4r}\bigg)} = \alpha \tag{R1}
\end{equation}
where $Q_n$ denotes the empirical measure on $n$ data points and $r$ denotes the radius of the support of $X \sim Q_n$. See Section 3.1 of \citet{carlsson2018wasserstein} for further details.

\subsubsection{Univariate Moment Problems}
As mentioned previously, the Chebyshev-Cantelli (zeroth partial moment) inequality and Scarf and Lo bounds on the first partial moment are classical moment problems. 
For random variable $X \in \mathbb{R}$, the lower tail Chebyshev-Cantelli inequality says 
\begin{equation}\label{C1}
\sup_{\{ X \,:\, M_1(X) = \mu,\, M_2(X) = \sigma^2 + \mu^2 \}} \Pr(X \leq \tau) = 
	\begin{cases}
        1, & \text{for } \tau \geq \mu,\\
        \frac{\sigma^2}{\sigma^2 + (\mu - \tau)^2}, & \text{for } \tau < \mu.
     \end{cases} \tag{C1}
\end{equation}
The upper tail Chebyshev-Cantelli inequality is
\begin{equation}\label{C2}
\sup_{\{ X \,:\, M_1(X) = \mu,\, M_2(X) = \sigma^2 + \mu^2 \}} \Pr(X \geq \tau) = 
	\begin{cases}
        1, & \text{for } \tau \leq \mu,\\
        \frac{\sigma^2}{\sigma^2 + (\tau - \mu)^2}, & \text{for } \tau > \mu.
    \end{cases} \tag{C2}
\end{equation}
The lower first partial moment bound says 
\begin{equation}\label{C3}
\sup_{\{ X \,:\, M_1(X) = \mu,\, M_2(X) = \sigma^2 + \mu^2 \}} \mathbb{E} \, [(\tau - X)_+] = 
        \frac{(\tau - \mu) + \sqrt{\sigma^2 + (\mu - \tau)^2}}{2}. \tag{C3}
\end{equation}
The corresponding upper first partial moment bound is
\begin{equation}\label{C4}
\sup_{\{ X \,:\, M_1(X) = \mu,\, M_2(X) = \sigma^2 + \mu^2 \}} \mathbb{E} \, [(X - \tau)_+] = 
        \frac{(\mu - \tau) + \sqrt{\sigma^2 + (\mu - \tau)^2}}{2}. \tag{C4}
\end{equation}
The lower second partial moment bound is given by
\begin{equation}\label{C5}
\sup_{\{ X \,:\, M_1(X) = \mu,\, M_2(X) = \sigma^2 + \mu^2 \}} \mathbb{E} \, [(\tau - X)^2_+] = 
        [(\tau - \mu)_+]^2 + \sigma^2. \tag{C5}
\end{equation}
Similarly, the upper second partial moment bound says 
\begin{equation}\label{C6}
\sup_{\{ X \,:\, M_1(X) = \mu,\, M_2(X) = \sigma^2 + \mu^2 \}} \mathbb{E} \, [(X - \tau)^2_+] = 
        [(\mu - \tau)_+]^2 + \sigma^2 \tag{C6}
\end{equation}
\citep[cf. e.g.][]{chen2011tight, popescu2005semidefinite}.

\subsubsection{Restatement of Problem of Moments Duality}
In Section 2 we formulate the primal and dual problems for the robust upper and lower tail probabilities and partial moments. A key step in our approach is to use duality results to formulate the simpler yet equivalent dual problems. In this context, to enforce the moment constraints for $Q \in \mathcal{U}_{\delta}(Q_n)$, we appeal to the strong duality of linear semi-infinite programs. The dual problem is much more tractable than the primal problem since it only involves the (finite dimensional) data-driven probability measure $Q_n$ as opposed to a continuum of probability measures. This allows us to solve an optimization problem under an empirical measure defined by the chosen data set. A brief restatement of this duality result follows next. See Appendix B of \citet{blanchet2019robust} and Proposition 2 of \citet{blanchet2018distributionally} for further details, or \citet{isii1962sharpness} Theorem 1 for the original result and commentary. \par
\textbf{The problem of moments}. Let $X$ be random vector in probability space $(\Omega, \mathcal{B},\mathcal{P})$ and $(\Omega, \mathcal{B},\mathcal{M}^+)$ where $\mathcal{P}$ and $\mathcal{M}^+$ denote the set of measures and non-negative measures respectively, such that Borel measurable functionals $g, f_1,\dots,f_k$ are integrable. Let $f = (f_1,\dots,f_k) : \Omega \rightarrow \mathbb{R}^k$ be a vector of moment functionals. For a real valued vector $q \in \mathbb{R}^k$, we are interested in the worst case bound
\begin{equation*}
v(q) := \sup \big( \: \mathbb{E}^\pi [g(X)] \: : \: \mathbb{E}^\pi [f(X)] = q \: ; \:\: \pi \in \mathcal{P} \: \big).
\end{equation*}
Adding a constant term by setting $f_0 = \mathbbm{1}_\Omega$, the constraint $\mathbb{E}^\pi[f_0(X)] = 1$, and defining $\tilde{f} = (f_0,f_1,\dots,f_k)$ and $\tilde{q} = (1,q_1,\dots,q_k)$ gives the following reformulation:
\begin{equation*}
v(q) := \sup \big( \: \int g(x) d\pi(x) : \int \tilde{f}(x) d\pi(x) = \tilde{q} \: ; \:\: \pi \in \mathcal{M}^+ \: \big).
\end{equation*}
If a certain Slater-type condition is satisfied, then one has the equivalent dual representation for the above:
\begin{prop*}
Let $\mathcal{Q}_{\tilde{f}} = \{ \int \tilde{f}(x) d\pi(x) : \pi \in \mathcal{M}^+ \}$. If $\tilde{q}$ is an interior point of $\mathcal{Q}_{\tilde{f}}$ then 
\[
v(q) = \inf \big( \: \sum_{i=0}^k a_i q_i : \:\: a_i \in \mathbb{R} ; \:\:\: \sum_{i=0}^k a_i \tilde{f}_i(x) \geq g(x) \:\: \forall x \in \Omega \: \big).
\]
\end{prop*}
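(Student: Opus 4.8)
The plan is to prove the two inequalities separately: weak duality $v(q)\le d(q)$ and strong duality $d(q)\le v(q)$, where I write $d(q)$ for the dual optimal value on the right-hand side. Weak duality is routine: if $\pi\in\mathcal{M}^+$ satisfies $\int\tilde f\,d\pi=\tilde q$ and $a\in\mathbb{R}^{k+1}$ satisfies $\sum_{i=0}^k a_i\tilde f_i(x)\ge g(x)$ for all $x\in\Omega$, then integrating this pointwise inequality against $\pi$ and using linearity of the integral gives $\int g\,d\pi\le\sum_{i=0}^k a_i\int\tilde f_i\,d\pi=\sum_{i=0}^k a_i\tilde q_i$; taking the supremum over feasible $\pi$ and the infimum over feasible $a$ yields $v(q)\le d(q)$. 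I would also record at the outset the elementary but essential consequence of the normalization $\tilde f_0=\mathbbm{1}_\Omega$: since $\int\tilde f_0\,d\pi=\pi(\Omega)$, the only $\pi\in\mathcal{M}^+$ with $\int\tilde f\,d\pi=0$ is $\pi\equiv 0$, so the primal value associated with right-hand side $0$ equals $0$.

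For strong duality I would work with the optimal-value (perturbation) function $V:\mathbb{R}^{k+1}\to\mathbb{R}\cup\{\pm\infty\}$ defined by $V(r):=\sup\{\int g\,d\pi:\int\tilde f\,d\pi=r,\ \pi\in\mathcal{M}^+\}$, so that $V(\tilde q)=v(q)$ and $\operatorname{dom}V=\mathcal{Q}_{\tilde f}$. Averaging feasible measures shows $V$ is concave. Assuming $v(q)<\infty$ (the case $v(q)=+\infty$ is disposed of separately: by weak duality the dual is then infeasible, so $d(q)=+\infty$ as well), the hypothesis $\tilde q\in\operatorname{int}\mathcal{Q}_{\tilde f}$ makes $V$ a proper concave function, finite on a neighborhood of $\tilde q$, and hence with a nonempty superdifferential at $\tilde q$. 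Pick a supergradient $a\in\mathbb{R}^{k+1}$, so that $V(r)\le V(\tilde q)+\langle a,\,r-\tilde q\rangle$ for every $r$. Applying this with $r=c\,\tilde f(x)$ (feasible via $\pi=c\,\delta_x$, which has objective value $c\,g(x)$) gives $c\,g(x)\le v(q)+c\langle a,\tilde f(x)\rangle-\langle a,\tilde q\rangle$ for all $c>0$; dividing by $c$ and letting $c\to\infty$ yields $g(x)\le\sum_{i=0}^k a_i\tilde f_i(x)$ for every $x\in\Omega$, i.e.\ $a$ is dual feasible. Applying the supergradient inequality with $r=0$ and using that the primal value at $0$ is $0$ gives $0\le v(q)-\langle a,\tilde q\rangle$, i.e.\ $\sum_{i=0}^k a_i\tilde q_i\le v(q)$. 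Therefore $d(q)\le\sum_{i=0}^k a_i\tilde q_i\le v(q)$, and combined with weak duality this gives $v(q)=d(q)$.

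The step I expect to be the crux — and the exact place the Slater-type interior-point hypothesis enters — is the existence of the supergradient: a proper concave function is subdifferentiable throughout the (relative) interior of its domain, but this generally fails on the boundary, which is precisely why the theorem requires $\tilde q$ to be an interior point of $\mathcal{Q}_{\tilde f}$ and not merely a feasible right-hand side. The two remaining technical chores are verifying that $V$ is proper (if $V$ were $+\infty$ somewhere while finite at the interior point $\tilde q$, concavity along a segment through $\tilde q$ would force $V(\tilde q)=+\infty$, a contradiction) and checking the measure-theoretic integrability/linearity facts underlying weak duality and the definition of $V$. As an alternative to the perturbation-function argument I would keep in mind the classical separating-hyperplane route of \citet{isii1962sharpness}: separate the point $(\tilde q,\,v(q)+\varepsilon)$ from the convex cone $\{(\int\tilde f\,d\pi,\ \int g\,d\pi - t):\pi\in\mathcal{M}^+,\ t\ge 0\}\subseteq\mathbb{R}^{k+1}\times\mathbb{R}$, show the multiplier on the last coordinate is strictly positive (else the resulting linear functional would be nonpositive on all of $\mathcal{Q}_{\tilde f}$ yet positive at the interior point $\tilde q$), normalize it to $1$, and read off dual feasibility together with $\sum_i a_i\tilde q_i\le v(q)+\varepsilon$ before sending $\varepsilon\downarrow 0$.
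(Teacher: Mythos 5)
The paper itself does not prove this proposition: it is presented as a restatement of a known background result, with the argument deferred to \citet{isii1962sharpness} and to the appendices of the cited Blanchet et al.\ papers. There is therefore no in-paper proof to match against; what you have written is a self-contained proof, and it is essentially correct. Your route is the perturbation-function one: concavity of the optimal-value function $V$, properness forced by the interior-point hypothesis, existence of a supergradient $a$ at $\tilde q$, dual feasibility of $a$ extracted by testing the supergradient inequality along the rays $r=c\,\tilde f(x)$ generated by scaled Dirac masses, and the bound $\langle a,\tilde q\rangle\le v(q)$ from testing at $r=0$, where the normalization $\tilde f_0=\mathbbm{1}_\Omega$ pins down $V(0)=0$. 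This is equivalent in substance to Isii's separating-hyperplane argument, which you correctly sketch as an alternative; either would serve, and the supergradient version is arguably cleaner because the strict positivity of the multiplier on the objective coordinate never has to be argued separately. Two small points to make explicit if you write this up: (i) the argument needs $V>-\infty$ on all of $\mathcal{Q}_{\tilde f}$ --- in particular at the auxiliary point $r_1$ on the extended segment in your properness argument and at $\tilde q$ itself --- which is exactly where the standing integrability assumption on $g,f_1,\dots,f_k$ enters; (ii) in the case $v(q)=+\infty$ you are invoking the convention that the infimum over an empty dual feasible set is $+\infty$. Neither affects correctness.
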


The primal problem is concerned with the worst case expected loss for some objective function $g$, under moment constraints.
Note that the primal problem is an infinite dimensional stochastic optimization problem and thus difficult to solve directly. The simplicity and tractability of the dual problem make it quite attractive. \par

\begingroup
\setlength{\parindent}{0pt}
\section{Theory: A Class of Distributionally Robust Risk Measures}
Section 2 develops a semi-analytic method that can solve the dual formulations of the distributionally robust moment problems in polynomial time. The approach involves solving the jointly convex dual problem via use of a one-dimensional line search method in conjunction with a directional descent (DD) method (see algorithm in Section 2.4) that is $\mathcal{O}(n^2)$. Note the DD method requires at most $\mathcal{O}(n^2)$ operations, as it searches at most $n^2$ line segments and regions that partition the half-plane, and is a descent method that only needs to traverse each line segment and/or region once.
Although this approach can compute solutions in polynomial time, it can be challenging to do so in practice, given the complexity of implementing the DD method. A simpler alternative, a computational approach which we refer to as the spherical method (SM), will presented in Section 3.

\subsection{Primal Formulations}
The distributionally robust moment problems are formed by extending the moment constraints to incorporate distributional ambiguity. Specifically, the new constraint set is $\{ X \,:\, M_1(X) = \mu,\, M_2(X) = \sigma^2 + \mu^2,\, Q \in \mathcal{U}_{\delta}(Q_n) \}$. Table 1 below lists the distributionally robust counterpart to each classical moment problem. The naming convention is as follows: (i) L for lower, (ii) U for upper, (iii) Z for zero, (iv) F for first, (v) S for second, (vi) and PM for partial moment. Our generic approach to solving these problems will consist of a few key steps: (i) use problem of moments duality (see Section 1.3.4) to formulation the convex finite dimensional dual problems $\inf_{\{\lambda_1 \geq 0, \lambda_2, \lambda_3\}} F(\lambda_1,\lambda_2,\lambda_3;\psi_\tau)$, (ii) apply an affine change of variables $\xi = \lambda_1 + \lambda_3$ that preserves convexity, (iii) develop a polynomial time algorithm (the DD method) to compute $f(\xi;\psi_\tau) := \min_{\{ \lambda_1 \geq 0, \lambda_2 \}} F(\lambda_1,\lambda_2,\xi;\psi_\tau)$ for fixed $\xi$, (iv) employ a line search method to evaluate $\min_{\{\xi > 0\}} f(\xi;\psi_\tau)$. This approach is applied to solve all six primal problems listed in Table 1. For $\lambda := (\lambda_1,\lambda_2,\lambda_3)$, the functional form for the corresponding dual problems is $\inf_{\{ \lambda_1  \geq 0, \lambda_2, \lambda_3\}} F(\lambda; \psi_\tau) := \lambda_1 \delta + \lambda_2 \mu + \lambda_3 (\sigma^2 + \mu^2) + \frac{1}{n} \sum_{i=1}^n \Psi_i(\lambda;\psi_\tau)$. Function $\Psi_i(\lambda;\psi_\tau)$ will embed a unique cost function $\psi_\tau$ for the corresponding primal problem. 

\renewcommand{\arraystretch}{1.75}
\begin{table}[H]
\normalsize
\begin{center}
\caption{Distributionally Robust Moment Problems}
\begin{tabular}{ |c|c|l| }
 \hline
\textit{Name} & \textit{Primal Problem} & \textit{Tag} \\
 \hline
LZPM & $\sup_{\{ X \,:\, M_1(X) = \mu,\, M_2(X) = \sigma^2 + \mu^2,\, Q \in \mathcal{U}_{\delta}(Q_n) \}} \mathbb{E} \, [\mathbbm{1}_{\{X \leq \tau\}}]$ & (P1) \\ 
\hline
UZPM & $\sup_{\{ X \,:\, M_1(X) = \mu,\, M_2(X) = \sigma^2 + \mu^2,\, Q \in \mathcal{U}_{\delta}(Q_n) \}} \mathbb{E} \, [\mathbbm{1}_{\{X \geq \tau\}}]$ & (P2) \\ 
\hline
LFPM & $\sup_{\{ X \,:\, M_1(X) = \mu,\, M_2(X) = \sigma^2 + \mu^2,\, Q \in \mathcal{U}_{\delta}(Q_n) \}} \mathbb{E} \, [(\tau - X)_+]$  & (P3) \\ 
\hline
UFPM & $\sup_{\{ X \,:\, M_1(X) = \mu,\, M_2(X) = \sigma^2 + \mu^2,\, Q \in \mathcal{U}_{\delta}(Q_n) \}} \mathbb{E} \, [(X - \tau)_+]$ & (P4) \\ 
\hline
LSPM & $\sup_{\{ X \,:\, M_1(X) = \mu,\, M_2(X) = \sigma^2 + \mu^2,\, Q \in \mathcal{U}_{\delta}(Q_n) \}} \mathbb{E} \, [(\tau - X)^2_+]$ & (P5) \\  
\hline
USPM & $\sup_{\{ X \,:\, M_1(X) = \mu,\, M_2(X) = \sigma^2 + \mu^2,\, Q \in \mathcal{U}_{\delta}(Q_n) \}} \mathbb{E} \, [(X - \tau)^2_+]$ & (P6) \\
\hline
\end{tabular} 
\end{center} 
\end{table}
\renewcommand{\arraystretch}{1}

\subsection{Dual Formulations}
As mentioned in the previous subsection, the functional form for the corresponding dual problems is $\inf_{\{ \lambda_1  \geq 0, \lambda_2, \lambda_3\}} F(\lambda;\psi_\tau) := \lambda_1 \delta + \lambda_2 \mu + \lambda_3 (\sigma^2 + \mu^2) + \frac{1}{n} \sum_{i=1}^n \Psi_i(\lambda;\psi_\tau)$. In particular, let us consider the primal problem (P1) for which $\psi_\tau = \mathbbm{1}_{\{x \leq \tau\}}$. Using a duality of moments argument leads to the following result:
\begin{proprep}
The dual problem to (P1) can be written as
\begin{equation}\label{D1}
\inf_{\{ \lambda_1  \geq 0, \lambda_2, \lambda_3\}} F(\lambda;\psi_\tau) := \lambda_1 \delta + \lambda_2 \mu + \lambda_3 (\sigma^2 + \mu^2) + \frac{1}{n} \sum_{i=1}^n \Psi_i(\lambda;\psi_\tau) \tag{D1}
\end{equation}
where $\Psi_i(\lambda;\psi_\tau) := \sup_{\{x \in \mathbb{R}\}} \,\, [ \mathbbm{1}_{\{x \leq \tau\}} - \lambda_1 (x - x_i)^2 - \lambda_2 x - \lambda_3 x^2 ] = - \lambda_1 x_i^2 + \sup_{\{x \in \mathbb{R}\}} \,\, [ \mathbbm{1}_{\{x \leq \tau\}} - (\lambda_1 + \lambda_3) x^2 + (2 \lambda_1 x_i - \lambda_2)x ]$.
\end{proprep}
\begin{proofsketch}
The key result to use here is problem of moments duality. The objective $\Pr(X \leq \tau)$ can be formulated as $\mathbb{E} \, [\mathbbm{1}_{\{X \leq \tau\}}]$. The empirical measure, moment constraints, and Wasserstein distance constraint can be formulated using $f$ as a vector of moment functionals and $q$ as a real valued vector. Since $\tilde{q}$ satisfies the interior point condition, invoking problem of moments duality yields the dual problem (D1) as specified above. See Appendix for the detailed proof.
\end{proofsketch}

\begin{appendixproof}
\begingroup
\setlength{\parindent}{0pt}
We follow the approach in Proposition 2 of \citet{blanchet2018distributionally}. Introduce a slack random variable $S := \alpha$ where $\alpha$ is a scalar value. Then the primal problem (P1) can be reformulated as
\begin{align*}\label{P1b}
\max \, \mathbb{E}^{\pi} [\mathbbm{1}_{\{ U \leq \tau \}}] \, : \{ \, \mathbb{E}^{\pi} [c(U,X)+S] = \alpha,\, \pi_X = Q_n,\, \pi(S = \alpha) = 1,\, \mathbb{E}^{\pi} [U] = \mu,\, \mathbb{E}^{\pi} [U^2] = \sigma^2 + \mu^2\, \} \tag{P1b}
\end{align*}
where $\pi \in \mathcal{P}(\mathbb{R}^m \times \mathbb{R}^m \times \mathbb{R}_+)$. Define $\Omega := \{(u,x,s) : c(u,x) < \infty,\, s \geq 0 \}$, and let 
\[
f(u,x,s) = \begin{bmatrix} \mathbbm{1}_{\{x = x_1\}}(u,x,s) \\ \vdots \\ \mathbbm{1}_{\{x = x_n\}}(u,x,s) \\ u \\ u^2 \\ \mathbbm{1}_{\{s = \alpha\}}(u,x,s) \\ c(u,x) + s \end{bmatrix}
\quad \text{and} \quad
q = \begin{bmatrix} \frac{1}{n} \\ \vdots \\ \frac{1}{n} \\ \mu \\ \sigma^2 + \mu^2 \\ 1 \\ \delta \end{bmatrix}.
\]
Then (P1b) can be reformulated as
\begin{align*}\label{P1c}
\max \, \mathbb{E}^{\pi} [\mathbbm{1}_{\{ U \leq \tau \}}] \, : \{ \, \mathbb{E}^{\pi} [f(U,X,S)] = q,\, \pi \in \mathcal{P}_\Omega\, \}. \tag{P1c}
\end{align*}
Now let $f_0 = \mathbbm{1}_\Omega,\, \tilde{f} = (f_0,f),\, \tilde{q} = (1,q),\, \mathcal{Q}_{\tilde{f}} := \{ \int \tilde{f}(x) d \pi(x) : \pi \in \mathcal{M}^+_\Omega \}$, where $\mathcal{M}^+_\Omega$ denotes the set of non-negative measures on $\Omega$. By inspection it is clear that $\tilde{q}$ lies in the interior of $\mathcal{Q}_{\tilde{f}}$. Therefore, by problem of moments duality, the optimal value of (P1c) is equal to that of its dual problem (D1c) below.
\begin{equation*}\label{D1c}
\inf_{\{a = (a_0,a_1,...,a_{n+3}) \in \mathbb{A}\}} \{ a_0 + \frac{1}{n} \sum_{i=1}^n a_i + a_{n+1} \mu + a_{n+2} (\sigma^2 + \mu^2) + a_{n+3} + a_{n+4} \delta\} \tag{D1c}
\end{equation*}
for $\mathbb{A} := \{ a : a_0 + \frac{1}{n} \sum_{i=1}^n a_i \mathbbm{1}_{\{ x = x_i \}} + a_{n+1} \mu + a_{n+2} (\sigma^2 + \mu^2) + a_{n+3} \mathbbm{1}_{\{ s = \alpha\}} + a_{n+4} [c(u,x)+s] \geq \mathbbm{1}_{\{ u \leq \tau \}},\, \forall \, (u,x,s) \in \Omega \}.$
Replacing $x=x_i$ in the definition of $\mathbb{A}$ gives the set of inequalities 
\[
a_0 + a_i + a_{n+3} \geq \sup_{\{ (u,s) \in \Omega \}} [ \mathbbm{1}_{\{ u \leq \tau \}} - a_{n+1} \mu - a_{n+2} (\sigma^2 + \mu^2) - a_{n+4} [c(u,x_i)+s] ] \, \forall i \, \in {\{1,...,n\}}.
\]
Furthermore, it follows that
\begin{align*}
&\sup_{\{ (u,s) \in \Omega \}} [ \mathbbm{1}_{\{ u \leq \tau \}} - a_{n+1} \mu - a_{n+2} (\sigma^2 + \mu^2) - a_{n+4} [c(u,x_i)+s] ] \\
&= \begin{cases}
	\infty, & \text{if } a_{n+4} < 0, \\ 
	\sup_{\{u \in \mathbb{R}\}} [ \mathbbm{1}_{\{ u \leq \tau \}} - a_{n+1} \mu - a_{n+2} (\sigma^2 + \mu^2) - a_{n+4} c(u,x_i) ], & \text{if } a_{n+4} \geq 0,
	 \end{cases}
\end{align*}
which leads to the following expression for the dual problem
\begin{equation*}\label{D1d}
\inf_{\{a \in \mathbb{R}^{n+4},\, a_{n+4} \geq 0\}} \{ a_0 + \sum_{i=1}^n a_i + a_{n+1} \mu + a_{n+2} (\sigma^2 + \mu^2) + a_{n+3} + a_{n+4} \delta :
 a_0 + a_i + a_{n+3} \geq \Psi_i(a;\psi_\tau) \, \forall i \, \in \{1,...,n\} \} \tag{D1d}
\end{equation*}
where $\Psi_i(a_{n+1},a_{n+2},a_{n+4};\psi_\tau) := \sup_{\{x \in \mathbb{R}\}} [ \mathbbm{1}_{\{ x \leq \tau \}} - a_{n+1} \mu - a_{n+2} (\sigma^2 + \mu^2) - a_{n+4} c(x,x_i) ]$.
Using $\lambda := (\lambda_1,\lambda_2,\lambda_3)$ to replace $(a_{n+4},a_{n+1},a_{n+2})$, the dual problem becomes
\begin{equation*}\label{D1e}
\inf_{\{ \lambda_1 \geq 0, \lambda_2, \lambda_3 \}} \lambda_1 \delta + \lambda_2 \mu + \lambda_3 (\sigma^2 + \mu^2) + \frac{1}{n} \sum_{i=1}^n \Psi_i(\lambda;\psi_\tau) \tag{D1e}
\end{equation*}
where $\Psi_i(\lambda;\psi_\tau) := \sup_{\{x \in \mathbb{R}\}} \,\, [ \mathbbm{1}_{\{x \leq \tau\}} - \lambda_1 (x - x_i)^2 - \lambda_2 x - \lambda_3 x^2 ]$.
\endgroup
\end{appendixproof}

The dual problems for the other primal problems (P2) through (P6) can be formulated in the same way. The proofs are similar to that for (P1) and are omitted.
See Table 2 below for a complete listing.

\renewcommand{\arraystretch}{1.75}
\begin{table}[H]
\normalsize
\begin{center}
\caption{Distributionally Robust Moment Problems}
\begin{tabular}{ |c|c|c||l| }
 \hline
\textit{Name} & $\psi_\tau$ & \textit{Dual Problem} $\Psi_i(\lambda;\psi_\tau)$ function & \textit{Tag} \\
 \hline
LZPM & $\mathbbm{1}_{\{x \leq \tau\}}$ & $\sup_{\{x \in \mathbb{R}\}} \,\, [ \mathbbm{1}_{\{x \leq \tau\}} - \lambda_1 (x - x_i)^2 - \lambda_2 x - \lambda_3 x^2 ]$ & (D1) \\ 
\hline
UZPM & $\mathbbm{1}_{\{x \geq \tau\}}$ & $\sup_{\{x \in \mathbb{R}\}} \,\, [ \mathbbm{1}_{\{x \geq \tau\}} - \lambda_1 (x - x_i)^2 - \lambda_2 x - \lambda_3 x^2 ]$ & (D2) \\ 
\hline
LFPM & $( \tau - x )_+$ & $\sup_{\{x \in \mathbb{R}\}} \,\, [ ( \tau - x )_+ - \lambda_1 (x - x_i)^2 - \lambda_2 x - \lambda_3 x^2 ]$ & (D3) \\ 
\hline
UFPM & $( x - \tau )_+$ & $\sup_{\{x \in \mathbb{R}\}} \,\, [ ( x - \tau )_+ - \lambda_1 (x - x_i)^2 - \lambda_2 x - \lambda_3 x^2 ]$ & (D4) \\ 
\hline
LSPM & $( \tau - x )^2_+$ & $\sup_{\{x \in \mathbb{R}\}} \,\, [  ( \tau - x )^2_+ - \lambda_1 (x - x_i)^2 - \lambda_2 x - \lambda_3 x^2 ]$ & (D5) \\  
\hline
USPM & $( x - \tau )^2_+$ & $\sup_{\{x \in \mathbb{R}\}} \,\, [ ( x - \tau )^2_+ - \lambda_1 (x - x_i)^2 - \lambda_2 x - \lambda_3 x^2 ]$ & (D6) \\
\hline
\end{tabular} 
\end{center} 
\end{table}
\renewcommand{\arraystretch}{1}

\subsection{Lemmas}

Towards solving the dual problems, we use lemmas to evaluate $\Psi_i \; \forall i \in \{1,...,n\}$ for (D1) through (D6). For Lemmas 2.1 - 2.4, let $a > 0$; for Lemmas 2.5 - 2.6, let $a > 1$. Define quadratic $g_0(x; a,b) := -a x^2 + 2b x$ and let the condensed notation $g_0(x)$ suppress the coefficients $\{a,b\}$. Also define $g(a,b;\psi_\tau) := \sup_{\{x \in \mathbb{R}\}} [ \psi_\tau + g_0(x;a,b) ]$. See Appendix for detailed proofs.

\begin{lemmarep}
For $\psi_\tau := \mathbbm{1}_{\{x \leq \tau\}}$,
\[
g(a,b;\psi_\tau) := \sup_{\{x \in \mathbb{R}\}} [ \mathbbm{1}_{\{x \leq \tau\}} + g_0(x;a,b) ]
= \begin{cases}
	1 + g_0(\frac{b}{a}), & \text{if } \tau \geq \frac{b}{a}, \\
	1 + g_0(\tau), & \text{if } \frac{b}{a} - \frac{1}{\sqrt{a}} < \tau < \frac{b}{a}, \\
	g_0(\frac{b}{a}), & \text{if } \tau \leq \frac{b}{a} - \frac{1}{\sqrt{a}}
	\end{cases}
\]
\end{lemmarep}
\begin{proofsketch}
The proof considers two cases: $\tau < x^*$ and $\tau \geq x^*$ where $x^* = \frac{b}{a}$ denotes the critical point for $g_0$. For the latter case, $g$ evaluates to $1 + g_0(x^*)$. For the former case, $g$ evaluates to $\max(1+g_0(\tau),g_0(x^*))$. Working out the bracketing $\tau$ values for these cases (and subcases) leads to the expression for $g$ as given above.
\end{proofsketch}
\begin{appendixproof}
\begingroup
\setlength{\parindent}{0pt}
Let $x^* = \frac{b}{a}$ denote the critical point for $g_0$. The proof proceeds in two cases.
\begin{case}
$x^* > \tau$\\
In this case $g = \sup_{\{x \in \mathbb{R}\}} \, [\mathbbm{1}_{\{x \leq \tau\}} + g_0] = \max( \sup_{\{x \leq \tau \}} \, [1 + g_0],\, \sup_{\{x > \tau\}} \, [g_0] ) = \max(1+g_0(\tau),g_0(x^*))$ since $x^* > \tau$. Thus,
\[
g = \begin{cases}
1 + g_0(\tau), & \text{if } g_0(\tau) > g_0(x^*) - 1, \\
g_0(x^*), & \text{if } g_0(\tau) \leq g_0(x^*) - 1, 
	\end{cases}
\]
where $g_0(\tau) > g_0(x^*) - 1 \implies g_0(x^*) - a(\tau - x^*)^2 > g_0(x^*) - 1$ by Taylor expansion of $g_0(\tau)$ since $x^*$ is the maximum. Simplifying gives $a (\tau - x^*)^2 < 1 \iff x^* - \frac{1}{\sqrt{a}} < \tau < x^*$.
\end{case}
\begin{case}
$x^* \leq \tau$\\
In this case $g = \sup_{\{x \in \mathbb{R}\}} \, [\mathbbm{1}_{\{x \leq \tau\}} + g_0] = \sup_{\{x \leq \tau \}} \, [\mathbbm{1}_{\{x \leq \tau\}} + g_0] = 
\sup_{\{x \leq \tau \}} \, [1 + g_0] = 1 + g_0(x^*)$.
\end{case}
Collecting function values for $g$ and bracketing conditions for $\tau$, we arrive at the expression for $g$ given in the lemma.
\endgroup
\end{appendixproof}

\begin{lemmarep}
For $\psi_\tau := \mathbbm{1}_{\{x \geq \tau\}}$,
\[
g(a,b;\psi_\tau) := \sup_{\{x \in \mathbb{R}\}} [ \mathbbm{1}_{\{x \geq \tau\}} + g_0(x; a,b) ]
= \begin{cases}
	1 + g_0(\frac{b}{a}), & \text{if } \tau \leq \frac{b}{a}, \\
	1 + g_0(\tau), & \text{if } \frac{b}{a} < \tau < \frac{b}{a} + \frac{1}{\sqrt{a}}, \\
	g_0(\frac{b}{a}), & \text{if } \tau \geq \frac{b}{a} + \frac{1}{\sqrt{a}}.
	\end{cases}
\]
\end{lemmarep}
\begin{proofsketch}
The approach is similar to the previous lemma; replace $\mathbbm{1}_{\{x \leq \tau\}}$ with $\mathbbm{1}_{\{x \geq \tau\}}$ in the calculations.
\end{proofsketch}
\begin{appendixproof}
\begingroup
\setlength{\parindent}{0pt}
Let $x^* = \frac{b}{a}$ denote the critical point for $g_0$. The proof proceeds in two cases.
\setcounter{case}{0}
\begin{case}
$x^* < \tau$\\
In this case $g = \sup_{\{x \in \mathbb{R}\}} \, [\mathbbm{1}_{\{x \geq \tau\}} + g_0] = \max( \sup_{\{x \geq \tau \}} \, [1 + g_0],\, \sup_{\{x < \tau\}} \, [g_0] ) = \max(1+g_0(\tau),g_0(x^*))$ since $x^* < \tau$. Thus,
\[
g = \begin{cases}
1 + g_0(\tau), & \text{if } g_0(\tau) > g_0(x^*) - 1, \\
g_0(x^*), & \text{if } g_0(\tau) \leq g_0(x^*) - 1, 
	\end{cases}
\]
where $g_0(\tau) > g_0(x^*) - 1 \implies g_0(x^*) - a(\tau - x^*)^2 > g_0(x^*) - 1$ by Taylor expansion of $g_0(\tau)$ \textit{since} $x^*$ is the maximum. Simplifying gives $a (\tau - x^*)^2 < 1 \iff x^* < \tau < x^* + \frac{1}{\sqrt{a}}$.
\end{case}
\begin{case}
$x^* \geq \tau$\\
In this case $g = \sup_{\{x \in \mathbb{R}\}} \, [\mathbbm{1}_{\{x \geq \tau\}} + g_0] = \sup_{\{x \geq \tau \}} \, [\mathbbm{1}_{\{x \geq \tau\}} + g_0] = 
\sup_{\{x \geq \tau \}} \, [1 + g_0] = 1 + g_0(x^*)$.
\end{case}
Combining the cases gives the result.
\endgroup
\end{appendixproof}

\begin{lemmarep}
For $\psi_\tau := ( \tau - x )_+$,
\[
g(a,b;\psi_\tau) := \sup_{\{x \in \mathbb{R}\}} [ ( \tau - x )_+ + g_0(x;a,b) ]
= \begin{cases}
	\frac{b^2}{a}, & \text{if } \tau \leq \frac{b}{a} - \frac{1}{4a},\\
	\tau + \frac{(b-1/2)^2}{a}, & \text{if } \tau > \frac{b}{a} - \frac{1}{4a}.
	\end{cases}
\]
\end{lemmarep}
\begin{proofsketch}
The proof considers three cases: $\tau < x^*$, $\tau > x^*$, and $\tau = x^*$, where $x^*$ denotes the critical point for $g$. 
For the first case, $g$ evaluates to $\frac{b^2}{a}$. For the second case, $g$ evaluates to $\tau + \frac{(b-1/2)^2}{a}$. Simplifying leads to $g$ as above.
\end{proofsketch}
\begin{appendixproof}
\begingroup
\setlength{\parindent}{0pt}
The first order optimality conditions (for left and right derivatives to bracket zero) say that 
\[
-\mathbbm{1}_{[0,\infty)}(\tau - x) + 2b - 2ax \geq 0 \geq -\mathbbm{1}_{(0,\infty)}(\tau - x) + 2b - 2ax
\]
which leads to three cases for the critical point $x^*$.
\setcounter{case}{0}
\begin{case}
$x^* > \tau$\\
$x^* > \tau \implies x^* = \frac{b}{a}$ \, for \, $\tau < \frac{b}{a}$.
\end{case}
\begin{case}
$x^* < \tau$\\
$x^* < \tau \implies x^* = \frac{b}{a} - \frac{1}{2a}$ \, for \, $\tau > \frac{b}{a} - \frac{1}{2a}$.
\end{case}
\begin{case}
$x^* = \tau$\\
This case violates the optimality condition and hence does not occur.
\end{case}
Consequently, there are three cases for $\tau$.
\setcounter{case}{0}
\begin{case}
$\tau \leq \frac{b}{a} - \frac{1}{2a}$\\
$x^* = \frac{b}{a} \implies g = \frac{b^2}{a}$.
\end{case}
\begin{case}
$\tau \geq \frac{b}{a}$\\
$x^* = \frac{b}{a} - \frac{1}{2a} \implies g = \tau + \frac{(b-1/2)^2}{a}$.
\end{case}
\begin{case}
$\tau \in (\frac{b}{a} - \frac{1}{2a},\frac{b}{a})$\\
$x^* \in \{ \frac{b}{a}, \frac{b}{a} - \frac{1}{2a} \} \implies g = \max{\{ \frac{b^2}{a},\tau + \frac{(b - 1/2)^2}{a} \}} \implies g
= \begin{cases}
	\frac{b^2}{a}, & \text{if } \frac{b}{a} - \frac{1}{2a} < \tau \leq \frac{b}{a} - \frac{1}{4a},\\
	\tau + \frac{(b-1/2)^2}{a}, & \text{if } \frac{b}{a} - \frac{1}{4a} < \tau < \frac{b}{a}.
	\end{cases}
$
\end{case}
As before, combining cases gives the result.
\endgroup
\end{appendixproof}
\begin{lemmarep}
For $\psi_\tau := ( x - \tau )_+$,
\[
g(a,b;\psi_\tau) := \sup_{\{x \in \mathbb{R}\}} [ ( x - \tau )_+ + g_0(x;a,b) ]
= \begin{cases}
	\frac{b^2}{a}, & \text{if } \tau \geq \frac{b}{a} + \frac{1}{4a},\\
	\frac{(b+1/2)^2}{a} - \tau, & \text{if } \tau < \frac{b}{a} + \frac{1}{4a}. 
	\end{cases}
\]
\end{lemmarep}
\begin{proofsketch}
Follow the approach in the previous lemma, exchanging $(\tau - x)_+$ with $(x - \tau)_+$.
\end{proofsketch}
\begin{appendixproof}
\begingroup
\setlength{\parindent}{0pt}
As before, the first order optimality conditions (for left and right derivatives) say that 
\[
\mathbbm{1}_{(0,\infty)}(x - \tau) + 2b - 2ax \geq 0 \geq \mathbbm{1}_{[0,\infty)}(x - \tau) + 2b - 2ax
\]
which leads to three cases for the critical point $x^*$.
\setcounter{case}{0}
\begin{case}
$x^* < \tau$\\
$x^* < \tau \implies x^* = \frac{b}{a}$ \, for \, $\tau > \frac{b}{a}$.
\end{case}
\begin{case}
$x^* > \tau$\\
$x^* > \tau \implies x^* = \frac{b}{a} + \frac{1}{2a}$ \, for \, $\tau < \frac{b}{a} + \frac{1}{2a}$.
\end{case}
\begin{case}
$x^* = \tau$\\
This case violates the optimality condition and hence does not occur.
\end{case}
Consequently, there are three cases for $\tau$.
\setcounter{case}{0}
\begin{case}
$\tau \geq \frac{b}{a} + \frac{1}{2a}$\\
$x^* = \frac{b}{a} \implies g = \frac{b^2}{a}$.
\end{case}
\begin{case}
$\tau \leq \frac{b}{a}$\\
$x^* = \frac{b}{a} + \frac{1}{2a} \implies g_4 = \frac{(b+1/2)^2}{a} - \tau$.
\end{case}
\begin{case}
$\tau \in (\frac{b}{a},\frac{b}{a} + \frac{1}{2a})$\\
$x^* \in \{ \frac{b}{a}, \frac{b}{a} + \frac{1}{2a} \} \implies g = \max{\{\frac{b^2}{a}, \frac{(b+1/2)^2}{a} - \tau \}} \implies g
= \begin{cases}
	\frac{b^2}{a}, & \text{if } \frac{b}{a} + \frac{1}{4a} \leq \tau < \frac{b}{a} + \frac{1}{2a},\\
	\frac{(b+1/2)^2}{a} - \tau, & \text{if } \frac{b}{a} < \tau < \frac{b}{a} + \frac{1}{4a}.  
	\end{cases}
$
\end{case}
Collecting function values for $g$ and bracketing conditions for $\tau$, we arrive at the expression for $g$ given in the lemma.
\endgroup
\end{appendixproof}

\begin{lemmarep}
For $\psi_\tau := ( \tau - x )^2_+$,
\[
g(a,b;\psi_\tau) := \sup_{\{x \in \mathbb{R}\}} [ ( \tau - x )^2_+ + g_0(x;a,b) ]
= \begin{cases}
	\frac{b^2}{a}, & \text{if } \tau \leq \frac{b}{a},\\
	\frac{b^2-2b\tau+a\tau^2}{a-1}, & \text{if } \tau > \frac{b}{a}.
	\end{cases}
\]
\end{lemmarep}
\begin{proofsketch}
Again, we have three cases: $\tau < x^*$, $\tau > x^*$, and $\tau = x^*$. 
For the first case, $g$ evaluates to $\frac{b^2}{a}$. For the second case, $g$ evaluates to $\frac{(b^2-2b\tau+a\tau^2)}{(a-1)}$. This leads to the result for $g$.
\end{proofsketch}
\begin{appendixproof}
\begingroup
\setlength{\parindent}{0pt}
In this case, from the first order optimality conditions, we have that
\[
-2(\tau - x) \cdot \mathbbm{1}_{[0,\infty)}(\tau - x) + 2b - 2ax \geq 0 \geq -2(\tau - x) \cdot \mathbbm{1}_{(0,\infty)}(\tau - x) + 2b - 2ax
\]
which leads to three cases for the critical point $x^*$.
\setcounter{case}{0}
\begin{case}
$x^* > \tau$\\
$x^* > \tau \implies x^* = \frac{b}{a}$ \, for \, $\tau < \frac{b}{a}$.
\end{case}
\begin{case}
$x^* < \tau$\\
$x^* < \tau \implies -2(\tau - x^*) - 2ax^* + 2b = 0 \implies  x^* = \frac{b - \tau}{a - 1}$ \, for \, $\tau > \frac{b - \tau}{a - 1} \implies \tau > \frac{b}{a}$
\, for \, $(a-1) > 0$.
\end{case}
\begin{case}
$x^* = \tau$\\
$x^* = \tau \implies x^* = \frac{b}{a}$ \, for \, $\tau = \frac{b}{a}$.
\end{case}
Consequently, there are two cases for $\tau$.
\setcounter{case}{0}
\begin{case}
$\tau \leq \frac{b}{a}$\\
$x^* = \frac{b}{a} \implies g = \frac{b^2}{a}$.
\end{case}
\begin{case}
$\tau > \frac{b}{a}$\\
$
x^* = \frac{b - \tau}{a - 1} \implies g = (\tau - \frac{b - \tau}{a - 1})^2 - a (\frac{b - \tau}{a - 1})^2 + 2 b (\frac{b - \tau}{a - 1}) = \tau^2 - \frac{2 \tau (b - \tau)}{a-1} - \frac{(b - \tau)^2}{a-1} + \frac{2b(b-\tau)}{a-1}   
 = \frac{b^2-2b\tau+a\tau^2}{a-1}
$ for $(a-1) > 0$, after doing some algebra.
\end{case}
Proceed as before to combine cases to arrive at the result.
\endgroup
\end{appendixproof}

\begin{lemmarep}
For $\psi_\tau := ( x - \tau )^2_+$,
\[
g(a,b;\psi_\tau) := \sup_{\{x \in \mathbb{R}\}} [ ( x - \tau )^2_+ + g_0(x;a,b) ]
= \begin{cases}
	\frac{b^2}{a}, & \text{if } \tau \geq \frac{b}{a},\\
	\frac{b^2-2b\tau+a\tau^2}{a-1}, & \text{if } \tau < \frac{b}{a}.
	\end{cases}
\]
\end{lemmarep}
\begin{proofsketch}
As before, follow the previous lemma, replacing $(\tau - x)^2_+$ with $(x - \tau)^2_+$.
\end{proofsketch}
\begin{appendixproof}
\begingroup
\setlength{\parindent}{0pt}
Once again, applying the first order optimality conditions gives
\[
2(x - \tau) \cdot \mathbbm{1}_{(0,\infty)}(x - \tau) + 2b - 2ax \geq 0 \geq 2(x - \tau) \cdot \mathbbm{1}_{[0,\infty)}(x - \tau ) + 2b - 2ax
\]
which leads to three cases for the critical point $x^*$.
\setcounter{case}{0}
\begin{case}
$x^* < \tau$\\
$x^* < \tau \implies x^* = \frac{b}{a}$ \, for \, $\tau > \frac{b}{a}$.
\end{case}
\begin{case}
$x^* > \tau$\\
$x^* > \tau \implies 2(x^* - \tau) - 2ax^* + 2b = 0 \implies  x^* = \frac{b - \tau}{a - 1}$ \, for \, $\tau < \frac{b - \tau}{a - 1} \implies \tau < \frac{b}{a}$
\, for \, $(a-1) > 0$.
\end{case}
\begin{case}
$x^* = \tau$\\
$x^* = \tau \implies x^* = \frac{b}{a}$ \, for \, $\tau = \frac{b}{a}$.
\end{case}
Consequently, there are two cases for $\tau$.
\setcounter{case}{0}
\begin{case}
$\tau \geq \frac{b}{a}$\\
$x^* = \frac{b}{a} \implies g = \frac{b^2}{a}$.
\end{case}
\begin{case}
$\tau < \frac{b}{a}$\\
$x^* = \frac{\tau - b}{1 - a} \implies g = (\frac{b - \tau}{a - 1} - \tau)^2 - a (\frac{b - \tau}{a - 1})^2 + 2 b (\frac{b - \tau}{a - 1}) = \frac{b^2-2b\tau+a\tau^2}{a-1}
$ for $(a-1) > 0$, using the result of Lemma 2.5, Case 2.
\end{case}
Once again, combining cases gives the result.
\endgroup
\end{appendixproof}

\subsection{Main Results}
The main results of this subsection solve the dual problems (D1) through (D6) and develop a polynomial time algorithm. Recall that the general form for these dual problems is $\inf_{\{ \lambda_1  \geq 0, \lambda_2, \lambda_3\}} F(\lambda;\psi_\tau) := \lambda_1 \delta + \lambda_2 \mu + \lambda_3 (\sigma^2 + \mu^2) + \frac{1}{n} \sum_{i=1}^n \Psi_i(\lambda;\psi_\tau)$. 

\begin{theorem}
The DD method evaluates $f(\xi;\psi_\tau) := \min_{\{ \lambda_1 \geq 0, \lambda_2\}} F(\lambda_1,\lambda_2,\xi;\psi_\tau)$ for $\xi := \lambda_1+\lambda_3 > 0$, in polynomial time.
\end{theorem}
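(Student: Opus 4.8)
The plan is to show that, once $\xi$ is frozen, the two-variable objective $F(\lambda_1,\lambda_2,\xi;\psi_\tau)$ is a convex, globally continuous, piecewise-quadratic function over an arrangement of $\mathcal{O}(n)$ lines in the half-plane $\{(\lambda_1,\lambda_2):\lambda_1\ge0\}$ --- an arrangement with $\mathcal{O}(n^2)$ cells and $\mathcal{O}(n^2)$ edges --- and then to verify that the DD method, which is a descent traversal of this arrangement, reaches the global minimizer after visiting each cell and edge at most once.

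First I would substitute $\lambda_3=\xi-\lambda_1$ and rewrite each $\Psi_i$ via the identity $\Psi_i(\lambda;\psi_\tau)=-\lambda_1 x_i^2+\sup_{x\in\mathbb{R}}\bigl[\psi_\tau-\xi x^2+(2\lambda_1 x_i-\lambda_2)x\bigr]=-\lambda_1 x_i^2+g\bigl(\xi,\ \lambda_1 x_i-\tfrac{\lambda_2}{2};\ \psi_\tau\bigr)$, with $g$ as defined in Section 2.3. Then I would invoke Lemmas 2.1--2.6: each $g(\xi,b;\psi_\tau)$ is piecewise in $b$, with at most two breakpoints (at values $b\in\{\xi\tau\}$, $\{\xi\tau\pm\sqrt{\xi}\}$, or $\{\xi\tau\pm\tfrac14\}$, all depending only on the frozen $\xi$ and on $\tau$) and with every piece a convex quadratic or affine function of $b$. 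Since $b_i=\lambda_1 x_i-\tfrac{\lambda_2}{2}$ is affine in $(\lambda_1,\lambda_2)$, each breakpoint of $g$ is a line in the $(\lambda_1,\lambda_2)$-plane, so the $n$ data points contribute at most $2n$ lines; together with $\{\lambda_1=0\}$ these form an arrangement subdividing the half-plane into $\mathcal{O}(n^2)$ convex polyhedral cells. On the closure of a single cell every $\Psi_i$ --- hence $F(\cdot,\cdot,\xi;\psi_\tau)$ --- coincides with one fixed quadratic in $(\lambda_1,\lambda_2)$; that quadratic is convex because $F$ is jointly convex in $(\lambda_1,\lambda_2,\xi)$ (each $\Psi_i$ is a supremum of affine functions of the multipliers, and $\xi=\lambda_1+\lambda_3$ is an affine change of variables), and the pieces furnished by the Lemmas agree at the breakpoints, so $F$ is continuous across cell boundaries. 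For (D5) and (D6) finiteness of the suprema requires $\xi>1$; when $0<\xi\le1$ some $\Psi_i=+\infty$, so $f(\xi;\psi_\tau)=+\infty$ and the method reports this immediately.

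With this representation in hand I would argue correctness and complexity of the DD method. Inside the current cell the method minimizes that cell's convex quadratic over the cell (including $\lambda_1\ge0$), a QP in two variables solvable in $\mathcal{O}(1)$ time; if its optimizer is a KKT point for the whole half-plane, then convexity of $F(\cdot,\cdot,\xi;\psi_\tau)$ certifies it as the global minimizer and $f(\xi;\psi_\tau)$ is returned. Otherwise the optimizer lies on a bounding edge, along which a feasible direction of strict descent into the adjacent cell exists, and the method crosses there. Because $F$ is convex and continuous, the objective strictly decreases along each transition, so no cell or edge is ever re-entered; hence the traversal halts after at most $\mathcal{O}(n^2)$ cells and $\mathcal{O}(n^2)$ line segments. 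Crossing a single edge switches exactly one $\Psi_i$ to a neighboring piece, changing the cell quadratic by at most a rank-one term, so each step admits an $\mathcal{O}(1)$ incremental update, giving an overall $\mathcal{O}(n^2)$ --- hence polynomial --- running time. Existence of a minimizer, needed so that the traversal terminates at a cell rather than escaping to infinity, follows from finiteness of the dual optimum (strong duality against the finite primal bounds (C1)--(C6)) together with the convexity and closedness of $F(\cdot,\cdot,\xi;\psi_\tau)$.

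The main obstacle is the correctness of the descent traversal: one must show that local optimality inside a cell together with the absence of a descending direction across \emph{every} bounding edge certifies \emph{global} optimality --- which is exactly where joint convexity of $F$ is indispensable, ruling out the method stalling at a spurious stationary point of a single piece --- and that the strict decrease at transitions bounds the iteration count by the $\mathcal{O}(n^2)$ size of the arrangement. Handling the boundary $\lambda_1=0$, the unbounded cells, and attainment of the infimum is a secondary technical point; carrying out the case analysis that writes each cell's quadratic from the Lemma pieces, and the explicit line equations of the arrangement, is routine.
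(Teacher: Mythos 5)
Your proposal is correct and follows essentially the same route as the paper's (much terser) proof: both rest on the observation that for fixed $\xi$ the half-plane $\{\lambda_1\ge 0,\lambda_2\}$ is partitioned by $\mathcal{O}(n)$ breakpoint lines into $\mathcal{O}(n^2)$ regions on each of which $F$ is a single convex quadratic, and that joint convexity of $F$ guarantees the descent traversal visits each region and edge at most once, giving an $\mathcal{O}(n^2)$ bound. Your write-up simply supplies the supporting details (the sup-of-affine argument for convexity, continuity across breakpoints, attainment of the minimum) that the paper leaves implicit.
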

\begin{proof}
 Note the DD method can evaluate $f(\xi;\psi_\tau)$ in at most $\mathcal{O}(n^2)$ operations, as it searches at most $n^2$ line segments and regions that partition the $\{ \lambda_1 \geq 0, \lambda_2 \}$ half-plane, and it is a descent method that only needs to traverse each line segment and/or region once. This once-only traversal property holds due to the joint convexity of $F(\lambda_1,\lambda_2,\xi;\psi_\tau)$.
\end{proof}
The polynomial time algorithm (to compute the solution) uses the DD method to evaluate $f(\xi;\psi_\tau)$ and a one-dimensional line search to minimize convex function $f(\xi;\psi_\tau)$ over $\xi > 0$. 

\begin{algorithm}[!htb]
\renewcommand{\thealgocf}{}
\SetAlgorithmName{DD}{} 
\DontPrintSemicolon
	\KwInput{$\{\xi\:,\: \{x_i\}\:,\: N\:,n\:,\:\delta\:,\:\mu\:,\:\sigma\}$}  
	\KwOutput{$\{y_\xi = f(\xi)\}$}
	Sort $\{ x_i \}$ Decreasing \; 
	Construct lines $\{\lambda_2 = U_i(\lambda_1 \geq 0) \}$ where $U_i ( \lambda_1 ) := 2\lambda_1 x_i - 2\xi \tau$ \;
	Construct lines $\{\lambda_2 = L_i(\lambda_1 \geq 0) \}$ where $L_i( \lambda_1 ) :=  2\lambda_1 x_i - 2(\xi \tau + \sqrt{\xi})$\;
	Compute $\{ V_m \}$, the set of vertices $(\lambda_1,\lambda_2)$ where either $\{U_i \cap L_j \neq \emptyset\}$ or $\lambda_2 \in \{ U_i(\lambda_1=0) \} \cup \{ L_i(\lambda_1=0) \} $ \;
    	Set $k=0$ and the initial search point to be $\lambda_c(k) = V_0$, the vertex with the smallest value for $F$ \;
	\While {$k < N$} 
	{
		Search adjacent regions $\Gamma$ for descent directions $\lambda^\circ_c(k) + t d_\gamma$ where we move towards the min value $\lambda^*_\gamma$ for $F_\Gamma$ \;
		\tcc*[h]{Here $F_\Gamma$ is defined such that $\{\Psi_i\}$ have the same functional form across the entire $(\lambda_1, \lambda_2)$ plane as in region $\Gamma$, where $\Gamma$ is defined by any supporting lines $U_i$ and $L_j. \;\; \lambda^\circ_c(k)$ is an interior point to region $\Gamma$ within $\epsilon$ of  $\lambda_c(k).$ The number of regions $\Gamma$ can vary from 1 to $n+1$.} 
		\\
		\If{ $F(\lambda^*_\gamma) < F(\lambda_c(k))$ }
		{
			\If{ $\lambda^\circ_c(k) + t d_\gamma \: \cap \: \{ U_i \cup L_i \} = \emptyset$ }
		    {
				$\lambda_c(k+1) := \lambda^*_\gamma$ \;
		    }
		    \Else
		    {
				$\{\lambda_j\} :=  \lambda^\circ_c(k) + t d_\gamma \: \cap \: \{ U_i \cup L_i \}$ \;
				$\lambda_c(k+1) := \argmin_{\{\lambda_j\}} \| \lambda_c(k) - \lambda_j \|$ \;
		    }
			$k = k+1$ \;
			\Continue \;

		}
		Search along adjacent rays $R$ (the line segments $\pm{\vec{U_i}}$ and $\pm{\vec{L_j}}$ emanating from point $\lambda_c(k)$) for descent directions $\lambda_c(k) + t d_r$ where we move towards a critical point $\lambda^*_r$ with zero directional derivative for $F$, so $D_{d_r} \, F(\lambda^*_r) = 0$ \;
		\If{ $\{d_r : D_{d_r} \, F(\lambda^*_r) = 0 \} \neq \emptyset$ }
		{
			$\lambda_c(k+1) := \argmin_{\{ \lambda^*_r \}} \| \lambda_c(k) - \lambda^*_r \|$  \;
			$k = k+1$ \;
		}
		\Else
		{
			\tcc*[h]{There are no descent directions via regions or rays so we are at the min value.} 
			\Return $y_\xi = F(\lambda_c(k))$ \;
		}
		
	}
\caption{Directional Descent Method to compute $f(\xi;\psi_\tau)$ for (D1) with $\psi_\tau := \mathbbm{1}_{\{x \leq \tau\}}$}
\end{algorithm}
\begin{remark}
A Matlab implementation of the DD method is available from the corresponding author upon reasonable request.
\end{remark}

\begin{remark}
For the following propositions, let abbreviation \textnormal{cbcipt} denote the phrase ``can be computed in polynomial time".
\end{remark}

\begin{proprep}
The solution to LZPM dual problem (D1) \textnormal{cbcipt} where
\begin{align*}
F(\lambda_1,\lambda_2,\xi;\psi_\tau)  &= \lambda_1 \delta + \lambda_2 \mu + (\xi - \lambda_1) (\sigma^2 + \mu^2) + \frac{1}{n} \sum_{i=1}^n \Psi_i(\lambda_1,\lambda_2,\xi;\psi_\tau), \\
\Psi_i(\lambda_1,\lambda_2,\xi;\psi_\tau) &= -\lambda_1 x_i^2 + 
	\begin{cases} 
	\infty, & \text{if } \xi \leq 0, \\
	1 + \frac{(2\lambda_1 x_i - \lambda_2)^2}{4\xi}, & \text{if } \tau \geq \frac{2\lambda_1 x_i - \lambda_2}{2\xi}, \\
	1 - \xi \tau^2 + (2\lambda_1 x_i - \lambda_2) \tau, & \text{if } \, \frac{2\lambda_1 x_i - \lambda_2}{2\xi} - \frac{1}{\sqrt{\xi}} < \tau < \frac{2\lambda_1 x_i - \lambda_2}{2\xi}, \\
	\frac{(2\lambda_1 x_i - \lambda_2)^2}{4\xi}, & \text{if } \tau \leq \frac{2\lambda_1 x_i - \lambda_2}{2\xi} - \frac{1}{\sqrt{\xi}}.
	\end{cases}
\end{align*}
\end{proprep}
\begin{proofsketch}
The dual problem (D1) is convex in $\lambda$ hence $f(\xi;\psi_\tau)$ is convex. For fixed $\xi$, $f(\xi;\psi_\tau)$ can be evaluated, using the DD method, in at most $\mathcal{O}(n^2)$ operations to find the (global) minimum of a piecewise convex quadratic function in $(\lambda_1,\lambda_2)$. Thus, one can apply a line search method on $f(\xi;\psi_\tau)$. The constraint $\xi > 0$ ensures the piecewise quadratics have finite local minima. Use Lemma 2.1 to do the calculations; see Appendix for a detailed proof.
\end{proofsketch}
\begin{appendixproof}
\begingroup
\setlength{\parindent}{0pt}
Applying Lemma 2.1 to evaluate $\Psi_i(\lambda;\psi_\tau)$ gives
\begin{align*}
\Psi_i(\lambda;\psi_\tau) &= \sup_{\{x \in \mathbb{R}\}} \,\, [ \mathbbm{1}_{\{x \leq \tau\}} - \lambda_1 (x - x_i)^2 - \lambda_2 x - \lambda_3 x^2 ] \\
	&= -\lambda_1 x_i^2 + \sup_{\{x \in \mathbb{R}\}} \,\, [ \mathbbm{1}_{\{x \leq \tau\}} - (\lambda_1 + \lambda_3) x^2 + (2 \lambda_1 x_i - \lambda_2) x ]\\
	&= -\lambda_1 x_i^2 + \sup_{\{x \in \mathbb{R}\}} \,\, [ \mathbbm{1}_{\{x \leq \tau\}} - \xi x^2 + (2 \lambda_1 x_i - \lambda_2) x ] \, (\text{for } \xi := \lambda_1 + \lambda_3) \\
	&= -\lambda_1 x_i^2 + g(\xi,\lambda_1 x_i - \lambda_2/2;\psi_\tau) \, (\text{for } a := \xi,\, b := \lambda_1 x_i - \lambda_2/2) \\
	&= -\lambda_1 x_i^2 + 
	\begin{cases}
	\infty, & \text{if } \xi \leq 0, \\
	1 + g_0(\frac{b}{a}), & \text{if } \tau \geq \frac{b}{a}, \\
	1 + g_0(\tau), & \text{if } \frac{b}{a} - \frac{1}{\sqrt{a}} < \tau < \frac{b}{a}, \\
	g_0(\frac{b}{a}), & \text{if } \tau \leq \frac{b}{a} - \frac{1}{\sqrt{a}}, 
	\end{cases} \\
	&= -\lambda_1 x_i^2 + 
	\begin{cases} 
	\infty, & \text{if } \xi \leq 0, \\
	1 + \frac{(2\lambda_1 x_i - \lambda_2)^2}{4\xi}, & \text{if } \tau \geq \frac{2\lambda_1 x_i - \lambda_2}{4\xi}, \\
	1 - \xi \tau^2 + (2\lambda_1 x_i - \lambda_2) \tau, & \text{if } \frac{2\lambda_1 x_i - \lambda_2}{2\xi} - \frac{1}{\sqrt{\xi}} < \tau < \frac{2\lambda_1 x_i - \lambda_2}{2\xi}, \\
	\frac{(2\lambda_1 x_i - \lambda_2)^2}{4\xi}, & \text{if } \tau \leq \frac{2\lambda_1 x_i - \lambda_2}{2\xi} - \frac{1}{\sqrt{\xi}}.
	\end{cases}
\end{align*}
Substituting this expression for $\Psi_i$ into $F$ gives the desired result. The constraint $\xi > 0$ ensures the piecewise quadratics have finite local minima. Recall that for fixed $\xi$, $f(\xi;\psi_\tau)$ can be evaluated, using the DD method, in at most $\mathcal{O}(n^2)$ operations to find the (global) minimum of a piecewise convex quadratic function in $(\lambda_1,\lambda_2)$.
\endgroup
\end{appendixproof}

\begin{prop}
The solution to UZPM dual problem (D2) \textnormal{cbcipt} where 
\begin{align*}
\Psi_i(\lambda_1,\lambda_2,\xi;\psi_\tau) &= -\lambda_1 x_i^2 + 
	\begin{cases} 
	\infty, & \text{if } \xi \leq 0, \\
	1 + \frac{(2\lambda_1 x_i - \lambda_2)^2}{4\xi}, & \text{if } \tau \leq \frac{2\lambda_1 x_i - \lambda_2}{2\xi}, \\
	1 - \xi \tau^2 + (2\lambda_1 x_i - \lambda_2) \tau, & \text{if } \, \frac{2\lambda_1 x_i - \lambda_2}{2\xi} < \tau < \frac{2\lambda_1 x_i - \lambda_2}{2\xi} + \frac{1}{\sqrt{\xi}}, \\
	\frac{(2\lambda_1 x_i - \lambda_2)^2}{4\xi}, & \text{if } \tau \geq \frac{2\lambda_1 x_i - \lambda_2}{2\xi} + \frac{1}{\sqrt{\xi}}.
	\end{cases}
\end{align*}
\end{prop}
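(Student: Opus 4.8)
The plan is to mirror exactly the argument used for Proposition~\ref{D1} (the LZPM dual), replacing the indicator $\mathbbm{1}_{\{x \leq \tau\}}$ with $\mathbbm{1}_{\{x \geq \tau\}}$ and invoking Lemma~2.2 in place of Lemma~2.1. Starting from the general dual form $\inf_{\{\lambda_1 \geq 0, \lambda_2, \lambda_3\}} F(\lambda;\psi_\tau)$ with $\psi_\tau = \mathbbm{1}_{\{x \geq \tau\}}$, I would first pull the $(x-x_i)^2$ term apart as $-\lambda_1(x-x_i)^2 = -\lambda_1 x_i^2 - \lambda_1 x^2 + 2\lambda_1 x_i x$, so that $\Psi_i(\lambda;\psi_\tau) = -\lambda_1 x_i^2 + \sup_{x}[\mathbbm{1}_{\{x \geq \tau\}} - (\lambda_1+\lambda_3)x^2 + (2\lambda_1 x_i - \lambda_2)x]$. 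Performing the affine change of variables $\xi := \lambda_1 + \lambda_3$ identifies the inner supremum with $g(a,b;\psi_\tau)$ from the Lemmas under the substitution $a := \xi$ and $b := \lambda_1 x_i - \lambda_2/2$, where $g_0(x;a,b) = -ax^2 + 2bx$.

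Next I would apply Lemma~2.2 directly. That lemma gives the three-case formula for $g(\xi, \lambda_1 x_i - \lambda_2/2; \mathbbm{1}_{\{x \geq \tau\}})$ in terms of the critical point $b/a = \frac{2\lambda_1 x_i - \lambda_2}{2\xi}$ and the threshold $b/a + 1/\sqrt{a} = \frac{2\lambda_1 x_i - \lambda_2}{2\xi} + \frac{1}{\sqrt{\xi}}$: the value is $1 + g_0(b/a)$ when $\tau \leq b/a$, equals $1 + g_0(\tau)$ on the intermediate band, and equals $g_0(b/a)$ when $\tau$ exceeds the threshold. Substituting $g_0(b/a) = b^2/a = \frac{(2\lambda_1 x_i - \lambda_2)^2}{4\xi}$ and $g_0(\tau) = -\xi\tau^2 + (2\lambda_1 x_i - \lambda_2)\tau$, and prepending the $-\lambda_1 x_i^2$ term, yields precisely the stated expression for $\Psi_i(\lambda_1,\lambda_2,\xi;\psi_\tau)$. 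As in Proposition~\ref{D1}, the case $\xi \leq 0$ forces the supremum to $+\infty$ (the quadratic $-\xi x^2$ is unbounded above when $\xi \le 0$), which is why the constraint $\xi > 0$ is imposed. Substituting this $\Psi_i$ into $F(\lambda_1,\lambda_2,\xi;\psi_\tau) = \lambda_1\delta + \lambda_2\mu + (\xi-\lambda_1)(\sigma^2+\mu^2) + \frac{1}{n}\sum_i \Psi_i$ completes the closed form.

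For the ``cbcipt'' (can be computed in polynomial time) claim I would again appeal to Theorem~2.1: for fixed $\xi > 0$ the function $F(\lambda_1,\lambda_2,\xi;\psi_\tau)$ is jointly convex in $(\lambda_1,\lambda_2)$ and piecewise quadratic, with at most $n^2$ regions/line segments partitioning the $\{\lambda_1 \geq 0, \lambda_2\}$ half-plane (the boundaries being the lines $\tau = \frac{2\lambda_1 x_i - \lambda_2}{2\xi}$ and $\tau = \frac{2\lambda_1 x_i - \lambda_2}{2\xi} + \frac{1}{\sqrt{\xi}}$ for each $i$); the DD method then evaluates $f(\xi;\psi_\tau) := \min_{\{\lambda_1 \geq 0, \lambda_2\}} F$ in $\mathcal{O}(n^2)$ operations, and an outer one-dimensional line search minimizes the convex map $\xi \mapsto f(\xi;\psi_\tau)$ over $\xi > 0$. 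The only mildly delicate point—and the one I would be most careful about—is bookkeeping the inequality directions when converting the three-case structure of Lemma~2.2: the band orientation flips relative to Lemma~2.1 (here $\tau \le b/a$ gives the interior-maximizer case, whereas for the LZPM problem it was $\tau \ge b/a$), so one must verify the chain $\tau \le b/a \;\Rightarrow\; \tau < b/a + 1/\sqrt{\xi} \;\Rightarrow\; \tau \ge b/a + 1/\sqrt{\xi}$ is reflected with the correct strict/non-strict endpoints. Everything else is routine algebra identical to the LZPM case, so the proof is relegated to the Appendix with only the Lemma~2.2 substitution spelled out.
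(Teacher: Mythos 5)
Your proposal is correct and follows essentially the same route as the paper, whose proof of this proposition is simply ``Follow the approach for Proposition 2.2, using Lemma 2.2'': you carry out the same decomposition of $-\lambda_1(x-x_i)^2$, the same substitution $a:=\xi$, $b:=\lambda_1 x_i - \lambda_2/2$ into Lemma~2.2, and the same appeal to the DD method plus an outer line search for the polynomial-time claim. The algebra ($g_0(b/a)=b^2/a=\tfrac{(2\lambda_1 x_i-\lambda_2)^2}{4\xi}$, $g_0(\tau)=-\xi\tau^2+(2\lambda_1 x_i-\lambda_2)\tau$) checks out and matches the stated case structure.
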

\begin{proof}
Follow the approach for Proposition 2.2, using Lemma 2.2.
\end{proof}

\begin{proprep}
The solution to LFPM dual problem (D3) \textnormal{cbcipt} where
\begin{align*}
\Psi_i(\lambda_1,\lambda_2,\xi;\psi_\tau) &= -\lambda_1 x_i^2 + 
	\begin{cases} 
	\infty, & \text{if } \xi \leq 0, \\
	\frac{(2\lambda_1 x_i - \lambda_2)^2}{4\xi}, & \text{if } \tau \leq \frac{4\lambda_1 x_i - 2 \lambda_2 - 1}{4\xi},\\
	\tau + \frac{(2\lambda_1 x_i - \lambda_2 - 1)^2}{4\xi}, & \text{if } \tau > \frac{4\lambda_1 x_i - 2 \lambda_2 - 1}{4\xi}. 
	\end{cases}
\end{align*}
\end{proprep}
\begin{proofsketch}
Details are similar as before with one exception: note that for fixed $\xi$, $f(\xi;\psi_\tau)$ can be evaluated, using a reduction of the DD method, in at most $\mathcal{O}(n)$ operations. There is no intersection of lines and the $(\lambda_1 \geq 0, \lambda_2)$ half-plane is partitioned into $(n+1)$ adjacent regions. Use Lemma 2.3; see Appendix for a detailed proof.
\end{proofsketch}
\begin{appendixproof}
\begingroup
\setlength{\parindent}{0pt}
Applying lemma 2.5 to evaluate $\Psi_i(\lambda;\psi_\tau)$ gives
\begin{align*}
\Psi_i(\lambda;\psi_\tau) &= \sup_{\{x \in \mathbb{R}\}} \,\, [ (\tau - x)_+ - \lambda_1 (x - x_i)^2 - \lambda_2 x - \lambda_3 x^2 ] \\
	&= -\lambda_1 x_i^2 + \sup_{\{x \in \mathbb{R}\}} \,\, [  (\tau - x)_+ - (\lambda_1 + \lambda_3) x^2 + (2 \lambda_1 x_i - \lambda_2) x ]\\
	&= -\lambda_1 x_i^2 + \sup_{\{x \in \mathbb{R}\}} \,\, [ (\tau - x)_+ - \xi x^2 + (2 \lambda_1 x_i - \lambda_2) x ] \, (\text{for } \xi := \lambda_1 + \lambda_3) \\
	&= -\lambda_1 x_i^2 + g(\xi,\lambda_1 x_i - \lambda_2/2;\psi_\tau) \, (\text{for } a := \xi,\, b := \lambda_1 x_i - \lambda_2/2) \\
	&= -\lambda_1 x_i^2 + 
	\begin{cases}
	\infty, & \text{if } \xi \leq 0, \\
	\frac{b^2}{a}, & \text{if } \tau \leq \frac{b}{a} - \frac{1}{4a},\\
	\tau + \frac{(b-1/2)^2}{a}, & \text{if } \tau > \frac{b}{a} - \frac{1}{4a},
	\end{cases} \\
	&= -\lambda_1 x_i^2 + 
	\begin{cases} 
	\infty, & \text{if } \xi \leq 0, \\
	\frac{(2\lambda_1 x_i - \lambda_2)^2}{4\xi}, & \text{if } \tau \leq \frac{4\lambda_1 x_i - 2 \lambda_2 - 1}{4\xi},\\
	\tau + \frac{(2\lambda_1 x_i - \lambda_2 - 1)^2}{4\xi}, & \text{if } \tau > \frac{4\lambda_1 x_i - 2 \lambda_2 - 1}{4\xi}.
	\end{cases}
\end{align*}
Substituting this expression for $\Psi_i$ into $F$ gives the desired result. The constraint $\xi > 0$ ensures the piecewise quadratics have finite local minima.
\endgroup
\end{appendixproof}


\begin{prop}
The solution to UFPM dual problem (D4) \textnormal{cbcipt} where
\begin{align*}
\Psi_i(\lambda_1,\lambda_2,\xi;\psi_\tau) &= -\lambda_1 x_i^2 + 
	\begin{cases} 
	\infty, & \text{if } \xi \leq 0, \\
	\frac{(2\lambda_1 x_i - \lambda_2)^2}{4\xi}, & \text{if } \tau \geq \frac{4\lambda_1 x_i - 2\lambda_2+1}{4\xi}, \\
	\frac{(2\lambda_1 x_i - \lambda_2 + 1)^2}{4\xi} - \tau, & \text{if } \tau < \frac{4\lambda_1 x_i - 2\lambda_2+1}{4\xi}.
	\end{cases}
\end{align*}
\end{prop}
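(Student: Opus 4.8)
The plan is to specialize the by-now-standard derivation (as in Propositions 2.2 and 2.4) to the cost function $\psi_\tau = (x-\tau)_+$. First I would expand $\Psi_i(\lambda;\psi_\tau) = \sup_{x \in \mathbb{R}}[\,(x-\tau)_+ - \lambda_1(x-x_i)^2 - \lambda_2 x - \lambda_3 x^2\,]$, pull the $x$-independent term $-\lambda_1 x_i^2$ out of the supremum, and rewrite the remainder, after the affine change of variables $\xi := \lambda_1 + \lambda_3$, as $-\lambda_1 x_i^2 + \sup_{x \in \mathbb{R}}[\,(x-\tau)_+ - \xi x^2 + (2\lambda_1 x_i - \lambda_2)x\,]$. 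In the notation of the Lemmas this is exactly $-\lambda_1 x_i^2 + g(a,b;\psi_\tau)$ with $a := \xi$ and $b := \lambda_1 x_i - \tfrac12\lambda_2$.

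Next I would invoke Lemma 2.4 for the closed form of $g(a,b;(x-\tau)_+)$ when $a > 0$, adjoining the case $\xi \leq 0$: when $\xi \leq 0$ the quadratic $-\xi x^2$ has nonnegative curvature, so the inner supremum diverges and $\Psi_i = +\infty$, which is what forces the effective constraint $\xi > 0$ in (D4). Substituting $a = \xi$, $b = \lambda_1 x_i - \tfrac12\lambda_2$ into the two branches of Lemma 2.4, the value $b^2/a$ becomes $(2\lambda_1 x_i - \lambda_2)^2/(4\xi)$, the value $(b+\tfrac12)^2/a - \tau$ becomes $(2\lambda_1 x_i - \lambda_2 + 1)^2/(4\xi) - \tau$, and the threshold $\tau \geq b/a + 1/(4a)$ becomes $\tau \geq (4\lambda_1 x_i - 2\lambda_2 + 1)/(4\xi)$. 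Plugging the resulting $\Psi_i$ back into $F(\lambda_1,\lambda_2,\xi;\psi_\tau) = \lambda_1\delta + \lambda_2\mu + (\xi - \lambda_1)(\sigma^2 + \mu^2) + \tfrac1n\sum_{i=1}^n \Psi_i$ yields the stated expression.

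For the polynomial-time claim I would argue exactly as in the LFPM case (Proposition 2.4). For each fixed $\xi > 0$, every $\Psi_i(\cdot,\cdot,\xi;\psi_\tau)$ is a convex piecewise quadratic in $(\lambda_1,\lambda_2)$ whose two pieces are separated by the single line $\lambda_2 = 2\lambda_1 x_i - 2\xi\tau + \tfrac12$, i.e.\ $U_i$ translated by the constant $\tfrac12$. These $n$ lines all share the intercept $\tfrac12 - 2\xi\tau$ and differ only in slope $2x_i$, so they meet only at the single boundary point $(0,\tfrac12 - 2\xi\tau)$ and cut the half-plane $\{\lambda_1 \geq 0,\,\lambda_2\}$ into $n+1$ adjacent wedge-like regions; hence the reduced DD method evaluates $f(\xi;\psi_\tau) := \min_{\lambda_1 \geq 0,\,\lambda_2} F$ in $\mathcal{O}(n)$ operations, and wrapping a one-dimensional line search over $\xi > 0$ around it (using joint convexity of $F$, hence convexity of $f$, and the fact that $\xi > 0$ keeps the local minima of the quadratic pieces finite) gives the polynomial-time solution.

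I do not expect a genuine obstacle: the content is a mechanical specialization of Lemma 2.4 together with the complexity argument already established for (D3). The only points needing a little care are the sign bookkeeping in the threshold $\tau \geq (4\lambda_1 x_i - 2\lambda_2 + 1)/(4\xi)$ and confirming that the region-separating line is $U_i$ shifted \emph{up} by $\tfrac12$ (rather than down by $\tfrac12$, as in the LFPM case), so that the ``no genuine intersections, $n+1$ regions'' structure of the reduced DD method still applies.
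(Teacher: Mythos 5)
Your proposal is correct and follows exactly the route the paper intends: the paper's own proof of this proposition is the one-line remark ``Similar to that for Proposition 2.4; use Lemma 2.4 and the simplified DD method,'' and your substitutions $a=\xi$, $b=\lambda_1 x_i - \lambda_2/2$ into Lemma 2.4, the resulting threshold $(4\lambda_1 x_i - 2\lambda_2+1)/(4\xi)$, and the $n{+}1$-region reduced-DD complexity argument all check out. You have simply written out the details the paper leaves implicit.
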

\begin{proof}
Similar to that for Proposition 2.4; use Lemma 2.4 and the simplified DD method.
\end{proof}

\begin{prop}
The solution to LSPM dual problem (D5) \textnormal{cbcipt} where 
\begin{align*}
\Psi_i(\lambda_1,\lambda_2,\xi;\psi_\tau) &= -\lambda_1 x_i^2 + 
	\begin{cases} 
	\infty, & \text{if } \xi \leq 1, \\
	\frac{(2\lambda_1 x_i - \lambda_2)^2}{4\xi}, & \text{if } \tau \leq \frac{2\lambda_1 x_i - \lambda_2}{2\xi}, \\
	\frac{(\lambda_1 x_i - \lambda_2/2)^2 - (2\lambda_1 x_i - \lambda_2)\tau + \xi \tau^2 }{\xi-1}, & \text{if } \tau > \frac{2\lambda_1 x_i - \lambda_2}{2\xi}.
	\end{cases}
\end{align*}
\end{prop}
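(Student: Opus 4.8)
The plan is to follow the template already used for Propositions~2.2 and~2.4, substituting Lemma~2.5 (the $\psi_\tau := (\tau-x)^2_+$ case) for Lemmas~2.1 and~2.3. Starting from the dual expression $\Psi_i(\lambda;\psi_\tau) = \sup_{x}[(\tau-x)^2_+ - \lambda_1(x-x_i)^2 - \lambda_2 x - \lambda_3 x^2]$, I would expand $\lambda_1(x-x_i)^2$ and collect the quadratic and linear terms in $x$ to write
\[
\Psi_i(\lambda;\psi_\tau) = -\lambda_1 x_i^2 + \sup_{x\in\mathbb{R}}\big[(\tau-x)^2_+ - \xi x^2 + (2\lambda_1 x_i - \lambda_2)x\big], \qquad \xi := \lambda_1+\lambda_3,
\]
which is exactly $-\lambda_1 x_i^2 + g(a,b;\psi_\tau)$ of Lemma~2.5 with $a := \xi$ and $b := \lambda_1 x_i - \lambda_2/2$.

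Next I would invoke Lemma~2.5 directly. Its hypothesis $a>1$ translates to $\xi>1$; when $\xi\le 1$ the restriction of the bracketed objective to $\{x\le\tau\}$ has leading coefficient $1-\xi\ge 0$ and hence (generically) diverges, producing the $\infty$ branch. For $\xi>1$, substituting $a=\xi$ and $2b=2\lambda_1 x_i-\lambda_2$ into the two branches of Lemma~2.5 gives $b/a = (2\lambda_1 x_i-\lambda_2)/(2\xi)$, $b^2/a = (2\lambda_1 x_i-\lambda_2)^2/(4\xi)$, and $(b^2-2b\tau+a\tau^2)/(a-1) = ((\lambda_1 x_i-\lambda_2/2)^2 - (2\lambda_1 x_i-\lambda_2)\tau + \xi\tau^2)/(\xi-1)$; reading these into the cases $\tau\le b/a$ and $\tau>b/a$ reproduces the claimed expression for $\Psi_i$, and substituting into $F(\lambda;\psi_\tau) = \lambda_1\delta + \lambda_2\mu + \lambda_3(\sigma^2+\mu^2) + \frac{1}{n}\sum_i \Psi_i$ with $\lambda_3 = \xi-\lambda_1$ yields the stated objective.

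For the polynomial-time claim I would argue as in Proposition~2.4. Each $\Psi_i$ is a supremum over $x$ of functions affine in $(\lambda_1,\lambda_2,\lambda_3)$, hence convex, and this convexity is preserved by the affine substitution $\xi=\lambda_1+\lambda_3$; thus $F(\lambda_1,\lambda_2,\xi;\psi_\tau)$ is jointly convex and, for fixed $\xi>1$, a piecewise convex quadratic in $(\lambda_1,\lambda_2)$ whose pieces are delimited by the $n$ lines $\lambda_2 = 2x_i\lambda_1 - 2\xi\tau$. Since these lines all pass through the common point $(0,-2\xi\tau)$, they cut the half-plane $\{\lambda_1\ge 0\}$ into $n+1$ adjacent wedges with no interior intersections, so a reduction of the DD method evaluates $f(\xi;\psi_\tau) := \min_{\{\lambda_1\ge 0,\,\lambda_2\}} F(\lambda_1,\lambda_2,\xi;\psi_\tau)$ in $\mathcal{O}(n)$ operations by minimizing the convex quadratic on each wedge; a one-dimensional line search of the convex function $f(\xi;\psi_\tau)$ over $\xi>1$ then completes the computation.

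The only delicate point—and the one I would verify carefully—is the behaviour at the branch boundary $\tau = (2\lambda_1 x_i-\lambda_2)/(2\xi)$: unlike the Scarf/Lo Lemmas~2.3 and~2.4, the two branches of Lemma~2.5 agree there (setting $\tau=b/a$ in $(b^2-2b\tau+a\tau^2)/(a-1)$ returns $b^2/a$), so there is no intervening ``max of two candidates'' sub-region and the piecewise quadratic is continuous across each line; I would also make sure the domain restriction $\xi>1$ (rather than $\xi>0$, as in D1--D4) is propagated consistently into the line search and into the finiteness/convexity argument.
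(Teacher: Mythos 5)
Your proposal is correct and follows essentially the same route as the paper: instantiate Lemma~2.5 with $a=\xi$, $b=\lambda_1 x_i-\lambda_2/2$ after completing the square in $\Psi_i$, note that finiteness now requires $\xi>1$ rather than $\xi>0$, and then run the (reduced) DD method plus a one-dimensional line search exactly as in the earlier propositions. Your added checks --- the continuity of the two branches at $\tau=(2\lambda_1 x_i-\lambda_2)/(2\xi)$ and the observation that the breakpoint lines meet only at $\lambda_1=0$, giving $n+1$ wedges --- are correct details that the paper leaves implicit.
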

\begin{proof}
Details are similar as before, including use of a variation of the DD method, with one exception: the constraint $\xi > 1$ ensures the piecewise quadratics have finite local minima. Use Lemma 2.5.
\end{proof}


\begin{prop}
The solution to USPM dual problem (D6) \textnormal{cbcipt} where 
\begin{align*}
\Psi_i(\lambda_1,\lambda_2,\xi;\psi_\tau) &= -\lambda_1 x_i^2 + 
	\begin{cases} 
	\infty, & \text{if } \xi \leq 1, \\
	\frac{(2\lambda_1 x_i - \lambda_2)^2}{4\xi}, & \text{if } \tau \geq \frac{2\lambda_1 x_i - \lambda_2}{2\xi}, \\
	\frac{(\lambda_1 x_i - \lambda_2/2)^2 - (2\lambda_1 x_i - \lambda_2)\tau + \xi \tau^2 }{\xi-1}, & \text{if } \tau < \frac{2\lambda_1 x_i - \lambda_2}{2\xi}.
	\end{cases}
\end{align*}
\end{prop}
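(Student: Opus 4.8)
The plan is to mirror the derivation used for the LZPM dual problem in Proposition~2.2, swapping in the cost function $\psi_\tau := (x-\tau)^2_+$ that defines (P6)/(D6) and invoking Lemma~2.6 in place of Lemma~2.1. First I would recall that the problem-of-moments duality argument in the appendix proof of the (D1) proposition is insensitive to the particular objective functional: introducing the slack variable $S=\alpha$, writing the moment functional vector $f$ with the empirical-measure indicators, the first two power moments, the slack indicator, and the transport cost $c(u,x)+s$, and checking the interior-point (Slater-type) condition all go through verbatim with $\mathbbm{1}_{\{u\le\tau\}}$ replaced by $(u-\tau)^2_+$. This yields the dual $\inf_{\{\lambda_1\ge 0,\lambda_2,\lambda_3\}} \lambda_1\delta + \lambda_2\mu + \lambda_3(\sigma^2+\mu^2) + \frac1n\sum_{i=1}^n \Psi_i(\lambda;\psi_\tau)$ with $\Psi_i(\lambda;\psi_\tau) = \sup_{\{x\in\mathbb{R}\}}[(x-\tau)^2_+ - \lambda_1(x-x_i)^2 - \lambda_2 x - \lambda_3 x^2]$, exactly the (D6) entry in Table~2.

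Next I would perform the same algebraic reduction as in the other appendix proofs: expand $-\lambda_1(x-x_i)^2 = -\lambda_1 x_i^2 + 2\lambda_1 x_i x - \lambda_1 x^2$, collect the quadratic terms under $\xi := \lambda_1 + \lambda_3$, and set $b := \lambda_1 x_i - \lambda_2/2$, so that $\Psi_i(\lambda;\psi_\tau) = -\lambda_1 x_i^2 + \sup_{\{x\in\mathbb{R}\}}[(x-\tau)^2_+ - \xi x^2 + 2bx] = -\lambda_1 x_i^2 + g(\xi,\, \lambda_1 x_i - \lambda_2/2;\, \psi_\tau)$ in the notation of Section~2.3. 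Applying Lemma~2.6 with $a := \xi$ (which requires $a>1$) then gives $g = \frac{b^2}{a}$ when $\tau \ge \frac{b}{a}$ and $g = \frac{b^2 - 2b\tau + a\tau^2}{a-1}$ when $\tau < \frac{b}{a}$. Substituting $a = \xi$ and $b = \lambda_1 x_i - \lambda_2/2$ and simplifying $\frac{b}{a} = \frac{2\lambda_1 x_i - \lambda_2}{2\xi}$ produces precisely the stated case expression for $\Psi_i(\lambda_1,\lambda_2,\xi;\psi_\tau)$, with the $\infty$ branch for $\xi \le 1$ coming from the fact that Lemma~2.6's hypothesis $a>1$ fails there — indeed for $\xi \le 1$ the leading coefficient of $(x-\tau)^2_+ - \xi x^2 + 2bx$ on the ray $x>\tau$ is $1-\xi \ge 0$, so the supremum is $+\infty$. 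Substituting the resulting $\Psi_i$ into $F$ completes the functional-form claim.

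For the ``cbcipt'' (polynomial-time) part I would argue as in Proposition~2.7 for (D5): for fixed $\xi>1$, $F(\lambda_1,\lambda_2,\xi;\psi_\tau)$ is a jointly convex, piecewise convex-quadratic function of $(\lambda_1,\lambda_2)$ whose pieces are delimited by the $n$ lines $\tau = \frac{2\lambda_1 x_i - \lambda_2}{2\xi}$, so a variation of the DD method of Theorem~2.1 finds its global minimum $f(\xi;\psi_\tau)$ in $\mathcal{O}(n^2)$ operations (here there is no second family of lines, as in the LFPM/UFPM cases, so the partition is even coarser); the constraint $\xi>1$ is exactly what guarantees each quadratic piece has a finite local minimum, since on the ``$\tau < b/a$'' branch the Hessian in $(\lambda_1,\lambda_2)$ is positive definite only when $\xi-1>0$. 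One then minimizes the convex univariate function $f(\xi;\psi_\tau)$ over $\xi>1$ by a one-dimensional line search. The main obstacle — and the only place real care is needed — is the bookkeeping in translating Lemma~2.6's two cases and their boundary $\tau = b/a$ into the $(\lambda_1,\lambda_2,\xi)$ coordinates and confirming that the degenerate regime $\xi \le 1$ genuinely forces $\Psi_i = +\infty$ (so that it may be excluded without loss); everything else is a direct transcription of the earlier propositions' proofs.
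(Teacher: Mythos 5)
Your proposal is correct and follows essentially the same route as the paper: the paper's proof of this proposition is a one-line pointer to the (D5)/LSPM argument ("follow the approach for Proposition 2.6; use Lemma 2.6"), which is exactly the chain you spell out — problem-of-moments duality as in the (D1) derivation, the substitution $a=\xi=\lambda_1+\lambda_3$, $b=\lambda_1 x_i-\lambda_2/2$ into Lemma 2.6, the $\xi\le 1$ degeneracy, and a DD-method variation plus line search for the cbcipt claim. Your write-up is simply a more explicit version of what the paper leaves implicit.
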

\begin{proof}
Follow the approach for Proposition 2.6; use Lemma 2.6.
\end{proof}
\endgroup

\begingroup
\setlength{\parindent}{0pt}
\section{Spherical Method}
In Section 2, we presented semi-analytic solutions to convex transformations of the dual DRMPs. While these solutions can be computed in polynomial time, with respect to variables $\lambda_1$ and $\lambda_2$, it can be practically challenging to do so. In this section we develop a computational method, the spherical method (SM), which is simpler to implement and reasonably accurate (on our set of test cases in Section 4). The trade-offs between the two methods are that the DD method is more difficult to code but runs faster; SM is simpler to code but runs slower. Note, however, that \textit{parfor} loops (parallel computing) can be used to significantly speed up computational time for SM. The main idea for SM is to change variables to spherical coordinates (for the dual DRMPs) and conduct a grid search on the angles $\theta$ and $\phi$. For the dual formulations $F(\lambda;\psi_\tau)$ in Section 2, let us set
\[
\lambda_1 := r \sin(\theta) \cos(\phi), \quad
\lambda_2 := r \cos(\theta), \quad
\lambda_3 := r \sin(\theta) \sin(\phi)
\]
where $\text{radius } r \geq 0, \; \text{inclination angle } \theta \in [0,\pi], \; \text{and azimuthal angle } \phi \in [0,2\pi)$. The constraint $\lambda_1 \geq 0$ maps to $r \sin(\theta) \cos(\phi) \geq 0$. It turns out that applying spherical transformations to $\lambda$ generates $F(r,\theta,\phi;\psi_\tau)$ which has a simple structure in $r$ such that computing the extremal point $r^*$ given $(\theta,\phi)$ can be done using straightforward methods. The details for each moment problem are worked out in this section.

\subsection{Dual Reformulations}
Let us transform the dual problem (D1), using spherical coordinates, into the dual problem (SD1) given below.
\begin{equation}\label{SD1}
\inf_{\{r \geq 0, \theta \in [0,\pi], \phi \in [0,2\pi)\}} F(r,\theta,\phi;\psi_\tau) := r \sin(\theta) \cos(\phi) \delta + r \cos(\theta) \mu + r \sin(\theta) \sin(\phi) (\sigma^2 + \mu^2) + \frac{1}{n} \sum_{k=1}^n \Psi_k(r,\theta,\phi;\psi_\tau) \tag{SD1}
\end{equation}
where $\Psi_k(r,\theta,\phi;\psi_\tau) := \sup_{\{x \in \mathbb{R}\}} \,\, [ \mathbbm{1}_{\{x \leq \tau\}} - r \sin(\theta) \cos(\phi) (x - x_k)^2 - r \cos(\theta) x - r \sin(\theta) \sin(\phi) x^2 ] = -r \sin(\theta) \cos(\phi) x^2_k + \sup_{\{x \in \mathbb{R}\}}$
$[ \mathbbm{1}_{\{x \leq \tau\}} - r \sin(\theta) (\cos(\phi) + \sin(\phi)) x^2 + (2r\sin(\theta)\cos(\phi)x_k - r\cos(\theta)) x]$ and $(r\sin(\theta) (\cos(\phi) + \sin(\phi)) > 0)$ guarantees a finite value for $\Psi_k$.
The reformulations for the other dual problems (D2) through (D6) can be done in the same way. 
See Table 3 below for a complete listing.

\renewcommand{\arraystretch}{1.75}
\begin{table}[H]
\normalsize
\begin{center}
\caption{Distributionally Robust Moment Problems}
\begin{tabular}{ |c|c|c|l| }
 \hline
\textit{Name} & $\psi_\tau$ & \textit{Reformulated Dual Problem} $\;\Psi_i(\lambda;\psi_\tau)\;$ function & \textit{Tag} \\
 \hline
LZPM & $\mathbbm{1}_{\{x \leq \tau\}}$ & $\sup_{\{x \in \mathbb{R}\}} \,\, [ \mathbbm{1}_{\{x \leq \tau\}} - r \sin(\theta) \cos(\phi) (x - x_k)^2 - r \cos(\theta) x - r \sin(\theta) \sin(\phi) x^2 ]$ & (SD1) \\ 
\hline
UZPM & $\mathbbm{1}_{\{x \geq \tau\}}$ & $\sup_{\{x \in \mathbb{R}\}} \,\, [ \mathbbm{1}_{\{x \geq \tau\}} - r \sin(\theta) \cos(\phi) (x - x_k)^2 - r \cos(\theta) x - r \sin(\theta) \sin(\phi) x^2 ]$ & (SD2) \\ 
\hline
LFPM & $( \tau - x )_+$ & $\sup_{\{x \in \mathbb{R}\}} \,\, [ ( \tau - x )_+ - r \sin(\theta) \cos(\phi) (x - x_k)^2 - r \cos(\theta) x - r \sin(\theta) \sin(\phi) x^2 ]$ & (SD3) \\ 
\hline
UFPM & $( x - \tau )_+$ & $\sup_{\{x \in \mathbb{R}\}} \,\, [ ( x - \tau )_+ - r \sin(\theta) \cos(\phi) (x - x_k)^2 - r \cos(\theta) x - r \sin(\theta) \sin(\phi) x^2 ]$ & (SD4) \\ 
\hline
LSPM & $( \tau - x )^2_+$ & $\sup_{\{x \in \mathbb{R}\}} \,\, [  ( \tau - x )^2_+ - r \sin(\theta) \cos(\phi) (x - x_k)^2 - r \cos(\theta) x - r \sin(\theta) \sin(\phi) x^2 ]$ & (SD5) \\  
\hline
USPM & $( x - \tau )^2_+$ & $\sup_{\{x \in \mathbb{R}\}} \,\, [ ( x - \tau )^2_+ - r \sin(\theta) \cos(\phi) (x - x_k)^2 - r \cos(\theta) x - r \sin(\theta) \sin(\phi) x^2 ]$ & (SD6) \\
\hline
\end{tabular} 
\end{center} 
\end{table}
\renewcommand{\arraystretch}{1}

\subsection{Lemmas}

Towards solving the reformulated dual problems (SD1) through (SD6), we use the following lemmas to evaluate $\Psi_i \; \forall i \in \{1,...,n\}$ for (SD1) through (SD6). For Lemmas 3.1 - 3.4, let $a > 0$; for Lemmas 3.5 - 3.6, let $a > 1$. As before, let quadratic $g_0(x; a,b) := -a x^2 + 2b x$ and let the condensed notation $g_0(x)$ suppress the coefficients $\{a,b\}$. Also define $g(a,b;\psi_\tau) := \sup_{\{x \in \mathbb{R}\}} [ \psi_\tau + g_0(x;a,b) ]$.

\begin{lemma}
For $\psi_\tau := \mathbbm{1}_{\{x \leq \tau\}}$,
\[
g(a,b;\psi_\tau) := \sup_{\{x \in \mathbb{R}\}} [ \mathbbm{1}_{\{x \leq \tau\}} + g_0(x;a,b) ]
= \max( 1 + g_0(\tau), \mathbbm{1}_{\{(b/a) \leq \tau\}} + g_0(b/a)).
\]
\end{lemma}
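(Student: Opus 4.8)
The plan is to compute the unconstrained supremum $\sup_{x\in\mathbb{R}}[\mathbbm{1}_{\{x\le\tau\}}+g_0(x;a,b)]$ by splitting the real line at the threshold $\tau$, exactly as in the earlier Lemma with $\psi_\tau=\mathbbm{1}_{\{x\le\tau\}}$, but this time packaging the answer into the single expression $\max(1+g_0(\tau),\,\mathbbm{1}_{\{(b/a)\le\tau\}}+g_0(b/a))$ rather than the three-case form. First I would write $\sup_x[\mathbbm{1}_{\{x\le\tau\}}+g_0(x)]=\max\big(\sup_{x\le\tau}[1+g_0(x)],\,\sup_{x>\tau}[g_0(x)]\big)$. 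Since $a>0$, the parabola $g_0(x)=-ax^2+2bx$ is strictly concave with unique maximizer $x^*=b/a$.

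Next I would evaluate each of the two pieces. For the first piece, $\sup_{x\le\tau}[1+g_0(x)]=1+g_0(b/a)$ if $b/a\le\tau$ and $1+g_0(\tau)$ otherwise (the constrained maximum of a concave function is at the interior vertex if feasible, else at the boundary $\tau$). For the second piece, $\sup_{x>\tau}g_0(x)=g_0(b/a)$ if $b/a>\tau$, and otherwise it is a supremum over $x>\tau$ of a decreasing function, giving the unattained value $g_0(\tau)$. Combining: when $b/a\le\tau$ the overall sup is $\max(1+g_0(b/a),\,g_0(\tau))=1+g_0(b/a)=\mathbbm{1}_{\{(b/a)\le\tau\}}+g_0(b/a)$, which also equals $\max(1+g_0(\tau),\mathbbm{1}_{\{(b/a)\le\tau\}}+g_0(b/a))$ since $g_0(\tau)\le g_0(b/a)$ here; when $b/a>\tau$ the overall sup is $\max(1+g_0(\tau),\,g_0(b/a))$, which is exactly $\max(1+g_0(\tau),\mathbbm{1}_{\{(b/a)\le\tau\}}+g_0(b/a))$ since the indicator vanishes. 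In both regimes the claimed formula holds.

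The only mildly delicate point is verifying that the two regimes glue to give the stated $\max$ uniformly, in particular checking the $b/a\le\tau$ case where one must observe $g_0(\tau)\le g_0(b/a)$ (because $b/a$ is the global maximizer) so that $1+g_0(\tau)$ is dominated by $1+g_0(b/a)$ and can be harmlessly included inside the $\max$. I would also note that when $b/a>\tau$ the supremum $\sup_{x>\tau}g_0(x)=g_0(b/a)$ is genuinely attained, whereas if $b/a\le\tau$ the term $\sup_{x>\tau}g_0(x)=g_0(\tau)$ is an approached-but-not-attained value; this does not affect the value of the overall supremum because in that regime it is dominated by $1+g_0(b/a)$. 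Collecting these observations yields the lemma; the remaining algebra is routine and can be deferred to the Appendix.
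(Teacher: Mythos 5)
Your proposal is correct and follows essentially the same route as the paper: both split into the two cases $b/a \leq \tau$ and $b/a > \tau$ (equivalently, decompose the supremum at the threshold $\tau$), evaluate the constrained suprema of the concave quadratic, and observe that the resulting case values are uniformly captured by the single $\max$ expression. Your added remarks on attainment and on why $1+g_0(\tau)$ can be harmlessly folded into the $\max$ when $b/a\leq\tau$ are the same observations the paper makes (namely $1+g_0(b/a)\geq 1+g_0(\tau)$), just spelled out more explicitly.
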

\begin{proof}
This characterization follows from inspection of the proof of Lemma 2.1 which considers two cases: $x^* > \tau$ and $x^* \leq \tau$ where $x^* = \frac{b}{a}$ denotes the critical point for $g_0$. For the latter case, $g$ evaluates to $1 + g_0(x^*) = \mathbbm{1}_{\{(b/a) \leq \tau\}} + g_0(b/a) \geq 1 + g_0(\tau)$. For the former case, $g$ evaluates to $\max(1+g_0(\tau),g_0(x^*))$ where $g_0(x^*) = \mathbbm{1}_{\{(b/a) \leq \tau\}} + g_0(b/a)$. Taking the max over both cases gives the expression in Lemma 3.1 above.
\end{proof}

\begin{lemma}
For $\psi_\tau := \mathbbm{1}_{\{x \geq \tau\}}$,
\[
g(a,b;\psi_\tau) := \sup_{\{x \in \mathbb{R}\}} [ \mathbbm{1}_{\{x \geq \tau\}} + g_0(x;a,b) ]
= \max( 1 + g_0(\tau), \mathbbm{1}_{\{(b/a) \geq \tau\}} + g_0(b/a)).
\]
\end{lemma}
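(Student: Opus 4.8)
The plan is to follow the template of the proof of Lemma 3.1, which in turn rests on the case analysis used for Lemma 2.2. First I would split the unconstrained supremum according to the indicator: on the region $\{x \ge \tau\}$ the objective equals $1 + g_0(x)$, while on $\{x < \tau\}$ it equals $g_0(x)$, so that
\[
g(a,b;\psi_\tau) = \max\Bigl(\, \sup_{x \ge \tau}\,[1 + g_0(x)],\ \ \sup_{x < \tau}\, g_0(x)\,\Bigr).
\]
Since $a > 0$, the quadratic $g_0(x;a,b) = -ax^2 + 2bx$ is strictly concave with unique maximizer $x^* = b/a$ and maximal value $g_0(x^*) = b^2/a$; this is the only structural fact about $g_0$ I need.

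Next I would split into the two cases $x^* \ge \tau$ and $x^* < \tau$, exactly the two cases appearing in the appendix proof of Lemma 2.2. When $x^* \ge \tau$, the first supremum is attained at $x^*$ and equals $1 + g_0(x^*)$, while the second (a supremum of a function increasing on $(-\infty,\tau]$) equals $g_0(\tau) \le g_0(x^*)$; hence $g = 1 + g_0(x^*)$. When $x^* < \tau$, the first supremum is attained at the boundary point $x = \tau$ and equals $1 + g_0(\tau)$, while the second is attained at $x^*$ and equals $g_0(x^*)$; hence $g = \max(1 + g_0(\tau),\, g_0(x^*))$.

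Finally I would collapse both cases into the single stated formula using the indicator. If $x^* = b/a \ge \tau$ then $\mathbbm{1}_{\{(b/a)\ge\tau\}} = 1$, so $1 + g_0(b/a) = \mathbbm{1}_{\{(b/a)\ge\tau\}} + g_0(b/a)$, and since $g_0(b/a) \ge g_0(\tau)$ this also equals $\max(1 + g_0(\tau),\ \mathbbm{1}_{\{(b/a)\ge\tau\}} + g_0(b/a))$. If $b/a < \tau$ then $\mathbbm{1}_{\{(b/a)\ge\tau\}} = 0$, so $g_0(b/a) = \mathbbm{1}_{\{(b/a)\ge\tau\}} + g_0(b/a)$ and $g = \max(1 + g_0(\tau),\ \mathbbm{1}_{\{(b/a)\ge\tau\}} + g_0(b/a))$ as well. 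Both cases yield the claimed identity.

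I do not expect a genuine obstacle here. The only points needing a touch of care are that the supremum over the open half-line $\{x < \tau\}$ is not attained when $x^* \ge \tau$ (but still equals $g_0(\tau)$ by monotonicity), and checking in the case $x^* \ge \tau$ that $1 + g_0(x^*)$ dominates $1 + g_0(\tau)$ so that the $\max$-form is genuinely consistent with $g = 1 + g_0(x^*)$. Since this mirrors the already-recorded argument for Lemma 3.1 almost verbatim, the write-up should be brief.
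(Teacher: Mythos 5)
Your proposal is correct and follows essentially the same route as the paper: the paper proves this by pointing to the two-case analysis ($x^* \ge \tau$ versus $x^* < \tau$, with $x^* = b/a$) from the corresponding closed-form lemma for the upper-tail indicator and then observing that both cases collapse into the single $\max$ expression via the indicator $\mathbbm{1}_{\{(b/a)\ge\tau\}}$, exactly as you do. Your write-up is merely a more explicit and self-contained version of that argument (including the minor care about the supremum over the open half-line not being attained), so no further changes are needed.
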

\begin{proof}
The approach is similar to the previous lemma; replace $\mathbbm{1}_{\{x \leq \tau\}}$ with $\mathbbm{1}_{\{x \geq \tau\}}$ in the calculations.
\end{proof}

\begin{lemma}
For $\psi_\tau := ( \tau - x )_+$,
\[
g(a,b;\psi_\tau) := \sup_{\{x \in \mathbb{R}\}} [ ( \tau - x )_+ + g_0(x;a,b) ]
= \frac{b^2}{a} + \bigg(\tau - \bigg(\frac{b}{a} - \frac{1}{4a}\bigg) \bigg)_+
\]
\end{lemma}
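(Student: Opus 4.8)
The plan is to derive this directly from Lemma 2.3, which already evaluates the same supremum as a two‑piece formula, and then show that the two pieces collapse into the single compact expression stated here. Lemma 2.3 gives $g(a,b;\psi_\tau) = \frac{b^2}{a}$ when $\tau \le \frac{b}{a} - \frac{1}{4a}$ and $g(a,b;\psi_\tau) = \tau + \frac{(b-1/2)^2}{a}$ when $\tau > \frac{b}{a} - \frac{1}{4a}$, so the entire task reduces to a short algebraic identity.

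First I would treat the regime $\tau \le \frac{b}{a} - \frac{1}{4a}$: here $\big(\tau - (\frac{b}{a}-\frac{1}{4a})\big)_+ = 0$, so the claimed right‑hand side equals $\frac{b^2}{a}$, matching Lemma 2.3 exactly. Next I would treat $\tau > \frac{b}{a}-\frac{1}{4a}$: here $\big(\tau - (\frac{b}{a}-\frac{1}{4a})\big)_+ = \tau - \frac{b}{a} + \frac{1}{4a}$, so the claimed right‑hand side is $\frac{b^2}{a} + \tau - \frac{b}{a} + \frac{1}{4a}$. I would then expand the Lemma 2.3 value, $\tau + \frac{(b-1/2)^2}{a} = \tau + \frac{b^2 - b + 1/4}{a} = \tau + \frac{b^2}{a} - \frac{b}{a} + \frac{1}{4a}$, and observe it is term‑for‑term identical. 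Since both formulas agree (and are continuous) at the breakpoint $\tau = \frac{b}{a}-\frac{1}{4a}$, the two descriptions coincide for all $\tau \in \mathbb{R}$, which is the assertion of the lemma.

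There is essentially no genuine obstacle here — the content is entirely in Lemma 2.3, and what remains is bookkeeping with the $(\cdot)_+$ operator. The one point worth stating carefully is that the compact form packages exactly the switch in the identity of the maximizer $x^*$ (from the unconstrained vertex $\frac{b}{a}$ of $g_0$ when the kink at $\tau$ is to the left of it, to the shifted vertex $\frac{b}{a}-\frac{1}{2a}$ of $(\tau-x)+g_0(x)$ otherwise), so I would add a sentence recalling that interpretation so the reader sees why a positive‑part expression is the natural closed form. If one preferred a self‑contained argument, the alternative is to redo the three‑case first‑order analysis of Lemma 2.3's appendix proof verbatim and then repackage, but reducing to Lemma 2.3 is shorter and I would present it that way.
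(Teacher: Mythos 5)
Your proposal is correct and follows essentially the same route as the paper's own proof: reduce to Lemma 2.3 and verify in each of its two regimes that the single positive-part expression reproduces the piecewise formula, with the key algebraic step $\tau + \frac{(b-1/2)^2}{a} = \frac{b^2}{a} + \tau - \frac{b}{a} + \frac{1}{4a}$. The additional remark about the maximizer switching between $\frac{b}{a}$ and $\frac{b}{a}-\frac{1}{2a}$ is a nice interpretive touch but not needed beyond what the paper does.
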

\begin{proof}
The expression above equals the result for Lemma 2.3 in both cases. Observe $(\tau - (\frac{b}{a} - \frac{1}{4a}) )_+$ is zero for $\tau \leq \frac{b}{a} - \frac{1}{4a}$. And for $\tau > (\frac{b}{a} - \frac{1}{4a})$, $\frac{b^2}{a} + (\tau - (\frac{b}{a} - \frac{1}{4a}) )_+ = \tau + \frac{(b - 1/2)^2}{a}$.
\end{proof}

\begin{lemma}
For $\psi_\tau := ( x - \tau )_+$,
\[
g(a,b;\psi_\tau) := \sup_{\{x \in \mathbb{R}\}} [ ( x - \tau )_+ + g_0(x;a,b) ]
= \frac{b^2}{a} + \bigg( \bigg(\frac{b}{a} + \frac{1}{4a}\bigg)  - \tau \bigg)_+
\]
\end{lemma}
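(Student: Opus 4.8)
The plan is to reduce Lemma 3.4 to the already-established Lemma 2.4 and then check that the compact ``positive part'' expression agrees, branch by branch, with the piecewise formula. First I would recall that Lemma 2.4 gives
\[
g(a,b;\psi_\tau) = \begin{cases} \frac{b^2}{a}, & \text{if } \tau \geq \frac{b}{a} + \frac{1}{4a}, \\ \frac{(b+1/2)^2}{a} - \tau, & \text{if } \tau < \frac{b}{a} + \frac{1}{4a}, \end{cases}
\]
so it suffices to show the right-hand side of Lemma 3.4, namely $\frac{b^2}{a} + \big(\big(\frac{b}{a} + \frac{1}{4a}\big) - \tau\big)_+$, collapses onto each branch over the corresponding range of $\tau$.

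For the first branch, when $\tau \geq \frac{b}{a} + \frac{1}{4a}$ the argument $\big(\frac{b}{a} + \frac{1}{4a}\big) - \tau$ of the positive part is nonpositive, so $\big(\big(\frac{b}{a}+\frac{1}{4a}\big) - \tau\big)_+ = 0$ and the expression reduces to $\frac{b^2}{a}$, matching Lemma 2.4. For the second branch, when $\tau < \frac{b}{a} + \frac{1}{4a}$ the positive part equals $\frac{b}{a} + \frac{1}{4a} - \tau$, so the expression becomes $\frac{b^2}{a} + \frac{b}{a} + \frac{1}{4a} - \tau$; expanding $\frac{(b+1/2)^2}{a} = \frac{b^2 + b + 1/4}{a} = \frac{b^2}{a} + \frac{b}{a} + \frac{1}{4a}$ shows this equals $\frac{(b+1/2)^2}{a} - \tau$, again matching Lemma 2.4. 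This is precisely the style used in the proof of Lemma 3.3.

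As an alternative, one could prove it directly, following the appendix proof of Lemma 2.4 verbatim: impose the first-order optimality condition $\mathbbm{1}_{(0,\infty)}(x-\tau) + 2b - 2ax \geq 0 \geq \mathbbm{1}_{[0,\infty)}(x-\tau) + 2b-2ax$, identify the two candidate critical points $x^* = \frac{b}{a}$ (admissible when $x^* < \tau$, i.e. $\tau > \frac{b}{a}$) and $x^* = \frac{b+1/2}{a}$ (admissible when $x^* > \tau$, i.e. $\tau < \frac{b}{a}+\frac{1}{2a}$), note the kink case $x^* = \tau$ cannot occur, evaluate $g_0$ (augmented by the appropriate piece of $(x-\tau)_+$) at each, and take the maximum on the overlap interval $\tau \in \big(\frac{b}{a},\frac{b}{a}+\frac{1}{2a}\big)$, where the two values cross exactly at $\tau = \frac{b}{a} + \frac{1}{4a}$. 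Either route produces the stated formula.

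Since this is an algebraic bookkeeping exercise, there is no substantive obstacle; the only care needed is in expanding $(b+1/2)^2$ correctly and in tracking which side of $\frac{b}{a} + \frac{1}{4a}$ the threshold $\tau$ falls on so that the positive part is resolved properly. The hypothesis $a > 0$ is used only to make $-ax^2 + 2bx$ strictly concave, guaranteeing that the supremum is attained and finite.
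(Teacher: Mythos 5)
Your proposal is correct and follows essentially the same route as the paper: the paper's proof of this lemma likewise reduces to Lemma 2.4 and verifies branch by branch that the positive-part expression vanishes for $\tau \geq \frac{b}{a} + \frac{1}{4a}$ and expands to $\frac{(b+1/2)^2}{a} - \tau$ otherwise. The algebraic check $\frac{b^2}{a} + \frac{b}{a} + \frac{1}{4a} = \frac{(b+1/2)^2}{a}$ is exactly the step the paper relies on.
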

\begin{proof}
This is equivalent to Lemma 2.4. Note that $( (\frac{b}{a} + \frac{1}{4a}) - \tau)_+$ is zero for $\tau \geq \frac{b}{a} + \frac{1}{4a}$. And for $\tau < (\frac{b}{a} + \frac{1}{4a})$, $\frac{b^2}{a} + ( (\frac{b}{a} + \frac{1}{4a})  - \tau)_+ = \frac{(b + 1/2)^2}{a} - \tau$.
\end{proof}

\begin{lemma}
For $\psi_\tau := ( \tau - x )^2_+$,
\[
g(a,b;\psi_\tau) := \sup_{\{x \in \mathbb{R}\}} [ ( \tau - x )^2_+ + g_0(x;a,b) ]
= \frac{b^2}{a} + \frac{a}{a-1} \bigg(\tau - \frac{b}{a} \bigg)^2_+
\]
\end{lemma}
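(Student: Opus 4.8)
The plan is to reduce this to the already-established Lemma 2.5 and verify that the compact closed form matches its two-case description, exactly mirroring the way Lemma 3.3 repackages Lemma 2.3. Since Lemma 2.5 gives
\[
g(a,b;\psi_\tau) = \begin{cases} \dfrac{b^2}{a}, & \tau \le \frac{b}{a}, \\[1ex] \dfrac{b^2 - 2b\tau + a\tau^2}{a-1}, & \tau > \frac{b}{a}, \end{cases}
\]
it suffices to show that the single expression $\frac{b^2}{a} + \frac{a}{a-1}\bigl(\tau - \frac{b}{a}\bigr)_+^2$ coincides with each branch on its respective region of $\tau$.

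First I would handle the region $\tau \le \frac{b}{a}$: here $\bigl(\tau - \frac{b}{a}\bigr)_+ = 0$, so the compact expression collapses to $\frac{b^2}{a}$, which is precisely the first branch of Lemma 2.5. Next I would handle $\tau > \frac{b}{a}$, where $\bigl(\tau - \frac{b}{a}\bigr)_+ = \tau - \frac{b}{a}$, so the compact expression becomes $\frac{b^2}{a} + \frac{a}{a-1}\cdot\frac{(a\tau-b)^2}{a^2} = \frac{b^2}{a} + \frac{(a\tau-b)^2}{a(a-1)}$. Placing both terms over the common denominator $a(a-1)$ gives $\frac{b^2(a-1) + (a\tau-b)^2}{a(a-1)}$; expanding the numerator yields $ab^2 - b^2 + a^2\tau^2 - 2ab\tau + b^2 = a(b^2 + a\tau^2 - 2b\tau)$, and cancelling the factor $a$ produces $\frac{b^2 - 2b\tau + a\tau^2}{a-1}$, matching the second branch.

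Since the two branch regions $\{\tau \le b/a\}$ and $\{\tau > b/a\}$ partition $\mathbb{R}$ and the compact form agrees with Lemma 2.5 on each, the two functions are identically equal, which is the claim. The hypothesis $a > 1$ (carried over from the setting of Lemmas 3.5--3.6) is exactly what makes $a - 1 > 0$, so all the divisions above are legitimate and the local maxima in the underlying supremum are finite; no additional obstacle arises. I expect the only mildly delicate point to be bookkeeping the algebra in the $\tau > b/a$ case, but that is routine and already carried out in essentially the same form in the appendix proof of Lemma 2.5, Case 2, which can simply be cited.
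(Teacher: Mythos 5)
Your proposal is correct and matches the paper's own proof, which likewise establishes the lemma by checking that the compact expression $\frac{b^2}{a} + \frac{a}{a-1}\bigl(\tau - \frac{b}{a}\bigr)^2_+$ reproduces both branches of Lemma~2.5 (the $(\cdot)_+$ term vanishing for $\tau \le \frac{b}{a}$, and the algebraic expansion recovering $\frac{b^2 - 2b\tau + a\tau^2}{a-1}$ for $\tau > \frac{b}{a}$). Your version simply spells out the common-denominator computation that the paper compresses into one line.
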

\begin{proof}
This formula agrees with Lemma 2.5. Observe $(\tau - \frac{b}{a})^2_+$ is zero for $\tau \leq \frac{b}{a}$. And for $\tau > \frac{b}{a}$, $\frac{b^2}{a} + \frac{a}{a-1} (\tau - \frac{b}{a})^2_+ =  \frac{b^2}{a} + \frac{a\tau^2-2b\tau+(b^2/a)}{a-1} = \frac{b^2 - 2b\tau + a\tau^2}{a-1}$ using a partial fractions decomposition.
\end{proof}

\begin{lemma}
For $\psi_\tau := ( x - \tau )^2_+$,
\[
g(a,b;\psi_\tau) := \sup_{\{x \in \mathbb{R}\}} [ ( x - \tau )^2_+ + g_0(x;a,b) ]
= \frac{b^2}{a} + \frac{a}{a-1} \bigg(\frac{b}{a} - \tau \bigg)^2_+
\]
\end{lemma}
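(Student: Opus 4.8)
The plan is to invoke Lemma 2.6 directly and then verify that the compact formula in the statement reproduces its two-case expression, exactly mirroring how Lemma 3.5 was obtained from Lemma 2.5. Lemma 2.6 already establishes that, for $a > 1$ and $\psi_\tau = (x-\tau)^2_+$,
\[
g(a,b;\psi_\tau) = \begin{cases} \dfrac{b^2}{a}, & \text{if } \tau \geq \dfrac{b}{a},\\[4pt] \dfrac{b^2 - 2b\tau + a\tau^2}{a-1}, & \text{if } \tau < \dfrac{b}{a}, \end{cases}
\]
so it suffices to check that the claimed closed form $\frac{b^2}{a} + \frac{a}{a-1}\big(\frac{b}{a} - \tau\big)^2_+$ agrees with each branch.

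First I would handle the case $\tau \geq b/a$. Here $\big(\tfrac{b}{a} - \tau\big)_+ = 0$, so the proposed expression collapses to $\tfrac{b^2}{a}$, matching the first branch of Lemma 2.6.

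Next I would treat $\tau < b/a$, where $\big(\tfrac{b}{a} - \tau\big)^2_+ = \big(\tfrac{b}{a} - \tau\big)^2 = \tfrac{(b-a\tau)^2}{a^2}$. Substituting gives $\tfrac{b^2}{a} + \tfrac{a}{a-1}\cdot\tfrac{(b-a\tau)^2}{a^2} = \tfrac{b^2}{a} + \tfrac{(b-a\tau)^2}{a(a-1)}$. Placing this over the common denominator $a(a-1)$ and expanding the numerator yields $ab^2 - 2ab\tau + a^2\tau^2 = a\big(b^2 - 2b\tau + a\tau^2\big)$, so the expression simplifies to $\tfrac{b^2 - 2b\tau + a\tau^2}{a-1}$, matching the second branch. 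Since $a > 1$, the denominators are positive throughout, so every manipulation is legitimate.

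There is no genuine obstacle here: the only care needed is the bookkeeping in the partial-fractions-style recombination in the case $\tau < b/a$, which is purely algebraic. Alternatively, one could reprove the supremum from scratch via the first-order optimality conditions as in the appendix proof of Lemma 2.6, noting that the critical point is $x^* = b/a$ when $\tau \geq b/a$ and $x^* = \frac{b-\tau}{a-1}$ when $\tau < b/a$, but reusing Lemma 2.6 is cleaner.
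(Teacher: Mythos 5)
Your proposal is correct and follows essentially the same route as the paper: both reduce the claim to Lemma 2.6 and verify algebraically that the compact formula $\frac{b^2}{a} + \frac{a}{a-1}\big(\frac{b}{a}-\tau\big)^2_+$ reproduces each of its two branches. The case split and the recombination over the common denominator $a(a-1)$ match the paper's computation, so nothing further is needed.
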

\begin{proof}
Compare vs.\ Lemma 2.6. Note that $(\frac{b}{a} - \tau)^2_+$ is zero for $\tau \geq \frac{b}{a}$. And for $\tau < \frac{b}{a}$, $\frac{b^2}{a} + \frac{a}{a-1} (\frac{b}{a} - \tau)^2_+ =  \frac{b^2}{a} + \frac{a\tau^2-2b\tau+(b^2/a)}{a-1} = \frac{(b^2 - 2b\tau + a\tau^2)}{a-1}$.
\end{proof}

\subsection{Main Results}
The main results compute approximate numerical solutions to the reformulated dual problems (SD1) through (SD6). Recall that the general form is $\inf_{\{r \geq 0, \theta \in [0,\pi], \phi \in [0,2\pi)\}} F(r,\theta,\phi;\psi_\tau) := r \sin(\theta) \cos(\phi) \delta + r \cos(\theta) \mu + r \sin(\theta) \sin(\phi) (\sigma^2 + \mu^2) + \frac{1}{n} \sum_{k=1}^n \Psi_k(r,\theta,\phi;\psi_\tau)$. SM, described below, is used to evaluate $\inf_{\{r \geq 0\}} F(r,\theta_i,\phi_j)$ for $(\theta_i,\phi_j)$ given. Let us begin with (SD1). 

\begin{algorithm}[!htb]
\renewcommand{\thealgocf}{}
\SetAlgorithmName{SM}{} 
\DontPrintSemicolon
	\KwInput{$\{\theta_i \:,\: \phi_j \:,\: \{x_k\}\:,\: N\:,\: \delta\}$ }
  	\KwOutput{ $\{y_{(\theta_i,\phi_j)} = \inf_{\{r \geq 0\}} F(r,\theta_i,\phi_j)\}$ }
	Select suitable $\epsilon$ such that $\epsilon \gtrsim 0$ \;
	Compute $\{ r_k \}$ such that $1 + g_0(\tau; a_k,b_k) = \mathbbm{1}_{\{(b_k/a_k) \leq \tau\}} + g_0(\frac{b_k}{a_k};a_k,b_k),$ where $r_k \geq \epsilon$ \;
	\If{ $\{r_k \} = \emptyset$ }
    {
		return $y_{(\theta_i,\phi_j)} = F(\epsilon,\theta_i,\phi_j)$ \;
    }
    \Else
    {
		Sort $\{ r_k \}$ Increasing \;
    		$k = 1$ \;
		\While{ $k \leq \vert \{ r_k \} \vert$  }
		{
			\If{ $\partial^-_r F(r_{k}) \leq 0 \leq \partial^+_r F(r_{k})$ }
			{
				$k^* = k$ \;
				return $y_{(\theta_i,\phi_j)} = F(r_{k^*},\theta_i,\phi_j)$ \;
			}
			\Else
			{
			$k = k + 1$ \;
			}
		}
    }
\caption{Spherical Method  to compute $y_{(\theta_i,\phi_j)} = \inf_{\{r \geq 0\}} F(r,\theta_i,\phi_j;\psi_\tau)$ for (SD1) with $\psi_\tau := \mathbbm{1}_{\{x \leq \tau\}}$}
\end{algorithm}

\begin{proprep}
The solution to LZPM dual problem (SD1) can be computed (approximately) using a two-dimensional grid search in angles $(\theta,\phi)$ subject to the constraint $\sin(\theta) (\cos(\phi) + \sin(\phi)) > 0$ and evaluating function $F$ for each point in a list $L$ of tuples $(\theta_i,\phi_j,r_{k^*})$. The list $L$ can be constructed by applying SM to do a linear search on at most $n$ breakpoints $r_k$ to find the extremal point $r_{k^*}$ such that either $0 \in \partial F(r_{k^*})$ or ${r_{k^*}} \gtrsim 0$ given $(\theta_i,\phi_j)$,
where points $r_k$ either satisfy the following relation for $a_k := r_k \sin(\theta_i) (\cos(\phi_j) + \sin(\phi_j)),\; b_k := r_k \sin(\theta_i) \cos(\phi_j) x_k - (\frac{r_k}{2}) \cos(\theta_i),$
\[
1 + g_0(\tau; a_k,b_k) = \mathbbm{1}_{\{(b_k/a_k) \leq \tau\}} + g_0(\frac{b_k}{a_k};a_k,b_k)
\]
or default to $\epsilon \gtrsim 0$ otherwise. The optimality condition for subgradient $\partial F(r_{k^*})$ says that
\[
\partial^-_r F(r_{k^*}) \leq 0 \leq \partial^+_r F(r_{k^*})
\]
where the left and right partial derivatives of $F$ evaluated at $r = r_{k^*}$ for $k^* \in \{1,...,n\}$ are given by
\begin{align*}
\partial^{-}_r F(r_{k^*}) &= \alpha  - \frac{\alpha_0}{n} \sum_k x^2_k  + \frac{1}{n} \bigg[ \; \sum_{k} g_0(\tau;\tilde{a}_k,\tilde{b}_k) + \sum_{k \in K_1} g_0(\frac{\tilde{b}_k}{\tilde{a}_k};\tilde{a}_k,\tilde{b}_k) - g_0(\tau;\tilde{a}_k,\tilde{b}_k) \; \bigg], \\
\partial^{+}_r F(r_{k^*}) &= \alpha - \frac{\alpha_0}{n} \sum_k x^2_k  + \frac{1}{n} \bigg[ \; \sum_{k} g_0(\tau;\tilde{a}_k,\tilde{b}_k) + \sum_{k \in K_2} g_0(\frac{\tilde{b}_k}{\tilde{a}_k};\tilde{a}_k,\tilde{b}_k) - g_0(\tau;\tilde{a}_k,\tilde{b}_k) \; \bigg],
\end{align*}
where \; $\alpha_0 := \sin(\theta_i) \cos(\phi_j) $,\; $\alpha := \alpha_0 \delta + \cos(\theta_i) \mu + \sin(\theta_i) \sin(\phi_j) (\sigma^2 + \mu^2)$,\; $\tilde{a}_k := \sin(\theta_i) (\cos(\phi_j) + \sin(\phi_j)),\; \\ 
\tilde{b}_k := \sin(\theta_i) \cos(\phi_j) x_k - (\frac{1}{2}) \cos(\theta_i),\,$ 
\begin{align*}
K_1 &:= \{ k: 1 + g_0(\tau;a_{k^*},b_{k,k^*}) < \mathbbm{1}_{\{(b_{k,k^*}/a_{k^*}) \leq \tau\}} + g_0(\frac{b_{k,k^*}}{a_{k^*}};a_{k^*},b_{k,k^*}) \},\\
K_2 &:= \{ k: 1 + g_0(\tau;a_{k^*},b_{k,k^*}) \leq \mathbbm{1}_{\{(b_{k,k^*}/a_{k^*}) \leq \tau\}} + g_0(\frac{b_{k,k^*}}{a_{k^*}};a_{k^*},b_{k,k^*}) \},
\end{align*}
for $b_{k,k^*} := r_{k^*} \sin(\theta_i) \cos(\phi_j) x_k - (\frac{r_{k^*}}{2}) \cos(\theta_i)$. 
\end{proprep}
\begin{proofsketch}
Compute the (at most) $n$ breakpoints $r_k$ such that $1 + g_0(\tau; a_k,b_k) = \mathbbm{1}_{\{(b_k/a_k) \leq \tau\}} + g_0(\frac{b_k}{a_k};a_k,b_k)$. Use Lemma 3.1 to deduce that the critical value $x^*_k = b_k/a_k$ for $\Psi_k(r_k,\theta_i,\phi_j;\psi_\tau) = -r_k \sin(\theta_i) \cos(\phi_j) x^2_k + g(a_k,b_k;\psi_\tau)$ does not depend on $r_k$. 
Furthermore, for fixed $k^*$, $\partial^-_r F(r_{k^*})$ and $\partial^+_r F(r_{k^*})$ only depend on $r_{k^*}$ through the index set relations $k \in K_{\{1,2\}}$.
 As the functions $\Psi_k$ and hence $F$ are convex in $r$ for $(\theta_i,\phi_j)$ fixed, it follows that one of these breakpoints $r_k$ must be the extremal point $r_{k^*}$ such that either $0 \in \partial F(r_{k^*})$ or ${r_{k^*}} \gtrsim 0$. Note this set is non-empty since the dual problem (SD1) has finite value. As we are doing a grid search in angles $(\theta,\phi)$, concatentate a list $L$ of tuples $(\theta_i,\phi_j,r_{k^*})$ under the constraint $\sin(\theta_i) (\cos(\phi_j) + \sin(\phi_j)) > 0$, evaluate $F(r_{k^*},\theta_i,\phi_j;\psi_\tau)$ for each point in the list $L$, and return the $\min$ of all these as the (approximate) solution to the dual problem (SD1). See Appendix for details.
\end{proofsketch}
\begin{appendixproof}
\begingroup
\setlength{\parindent}{0pt}
Suppose $(\theta_i,\phi_j)$ are fixed and $a_k := r_k \sin(\theta_i) (\cos(\phi_j) + \sin(\phi_j)),\; b_k := r_k \sin(\theta_i) \cos(\phi_j) x_k - (\frac{r_k}{2}) \cos(\theta_i),$ where points $r_k$ for $k \in \{1,...,n\}$ satisfy the relation 
\[
1 + g_0(\tau; a_k,b_k) = \mathbbm{1}_{\{(b_k/a_k) \leq \tau\}} + g_0(\frac{b_k}{a_k};a_k,b_k)
\]
as in the statement of the theorem. Applying Lemma 3.1 to $\Psi_k(r_k,\theta_i,\phi_j;\psi_\tau) = -r_k \sin(\theta_i) \cos(\phi_j) x^2_k + g(a_k,b_k;\psi_\tau)$, we deduce that the critical value $x^*_k = b_k/a_k$ does not depend on $r_k$. For $k^* \in \{1,...,n\}$, a straightforward calculation shows that the left and right partial derivatives of $F$ evaluated at $r = r_{k^*}$ are given by
\begin{align*}
\partial^{-}_r F(r_{k^*}) &= \alpha - \frac{\alpha_0}{n} \sum_k x^2_k  + \frac{1}{n} \bigg[ \; \sum_{k} g_0(\tau;\tilde{a}_k,\tilde{b}_k) + \sum_{k \in K_1} g_0(\frac{\tilde{b}_k}{\tilde{a}_k};\tilde{a}_k,\tilde{b}_k) - g_0(\tau;\tilde{a}_k,\tilde{b}_k) \; \bigg], \\
\partial^{+}_r F(r_{k^*}) &= \alpha - \frac{\alpha_0}{n} \sum_k x^2_k + \frac{1}{n} \bigg[ \; \sum_{k} g_0(\tau;\tilde{a}_k,\tilde{b}_k) + \sum_{k \in K_2} g_0(\frac{\tilde{b}_k}{\tilde{a}_k};\tilde{a}_k,\tilde{b}_k) - g_0(\tau;\tilde{a}_k,\tilde{b}_k)\; \bigg],
\end{align*}
where \; $\alpha_0 := \sin(\theta_i) \cos(\phi_j) $,\; $\alpha := \alpha_0 \delta + \cos(\theta_i) \mu + \sin(\theta_i) \sin(\phi_j) (\sigma^2 + \mu^2)$,\; $\tilde{a}_k := \sin(\theta_i) (\cos(\phi_j) + \sin(\phi_j)),\; \\
\tilde{b}_k := \sin(\theta_i) \cos(\phi_j) x_k - (\frac{1}{2}) \cos(\theta_i)\,$ 
\begin{align*}
K_1 &:= \{ k: 1 + g_0(\tau;a_{k^*},b_{k,k^*}) < \mathbbm{1}_{\{(b_{k,k^*}/a_{k^*}) \leq \tau\}} + g_0(\frac{b_{k,k^*}}{a_{k^*}};a_{k^*},b_{k,k^*}) \},\\
K_2 &:= \{ k: 1 + g_0(\tau;a_{k^*},b_{k,k^*}) \leq \mathbbm{1}_{\{(b_{k,k^*}/a_{k^*}) \leq \tau\}} + g_0(\frac{b_{k,k^*}}{a_{k^*}};a_{k^*},b_{k,k^*}) \},
\end{align*}
for $a_{k^*} := r_{k^*} \sin(\theta_i) (\cos(\phi_j) + \sin(\phi_j)),\; b_{k,k^*} := r_{k^*} \sin(\theta_i) \cos(\phi_j) x_k - (\frac{r_{k^*}}{2}) \cos(\theta_i)$. 
Furthermore, $\partial^-_r F(r_{k^*})$ and $\partial^+_r F(r_{k^*})$ only depend on $r_{k^*}$ through the index set relations $k \in K_{\{1,2\}}$. As the functions $\Psi_k$ are a pointwise max of a family of affine functions in $r$, we conclude that $\Psi_k$ and hence $F$ are convex in $r$ for $(\theta_i,\phi_j)$ fixed. It follows that one of these breakpoints $r_k$ must be the extremal point $r_{k^*}$ such that either $\partial^-_r F(r_{k^*}) \leq 0 \leq \partial^+_r F(r_{k^*})$ and hence $0 \in \partial F(r_{k^*})$ or ${r_{k^*}} \gtrsim 0$. Note the set $\{ r > 0 : \partial F(r) \geq 0 \}$ is non-empty since the dual problem (SD1) has finite value. As we are doing a grid search in angles $(\theta,\phi)$, concatentate a list $L$ of tuples $(\theta_i,\phi_j,r_{k^*})$ under the constraint $\sin(\theta_i) (\cos(\phi_j) + \sin(\phi_j)) > 0$, evaluate $F(r_{k^*},\theta_i,\phi_j;\psi_\tau)$ for each point in the list $L$, and return the $\min$ of all these as the (approximate) solution to the dual problem (SD1). 

\endgroup
\end{appendixproof}


\begin{prop}
The solution to UZPM dual problem (SD2) can be computed (approximately) by replacing $\mathbbm{1}_{\{x \leq \tau\}}$ with 
$\mathbbm{1}_{\{x \geq \tau\}}$, and following the approach described in Proposition 3.1.
\end{prop}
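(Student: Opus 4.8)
The plan is to follow the proof of Proposition 3.1 line by line, replacing $\mathbbm{1}_{\{x \leq \tau\}}$ by $\mathbbm{1}_{\{x \geq \tau\}}$ throughout and invoking Lemma 3.2 in place of Lemma 3.1. First I would apply the spherical change of variables $\lambda_1 = r\sin(\theta)\cos(\phi)$, $\lambda_2 = r\cos(\theta)$, $\lambda_3 = r\sin(\theta)\sin(\phi)$ to the dual (D2) and complete the square inside the inner supremum, so that for fixed $(\theta,\phi)$ one has $\Psi_k(r,\theta,\phi;\psi_\tau) = -r\sin(\theta)\cos(\phi)\,x_k^2 + g(a_k,b_k;\psi_\tau)$ with $a_k := r\sin(\theta)(\cos(\phi)+\sin(\phi))$ and $b_k := r\sin(\theta)\cos(\phi)\,x_k - (r/2)\cos(\theta)$; the constraint $\sin(\theta)(\cos(\phi)+\sin(\phi)) > 0$ forces $a_k > 0$, which guarantees the supremum is finite.

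Second, I would substitute the closed form of Lemma 3.2, namely $g(a_k,b_k;\psi_\tau) = \max(1 + g_0(\tau;a_k,b_k),\, \mathbbm{1}_{\{(b_k/a_k) \geq \tau\}} + g_0(b_k/a_k;a_k,b_k))$. The decisive structural observation, identical to the LZPM case, is that the critical point $x_k^* = b_k/a_k$ is independent of $r$ (the factor $r$ cancels in the ratio), so $F(r,\theta,\phi;\psi_\tau)$ is, for fixed $(\theta,\phi)$, a pointwise maximum of finitely many affine functions of $r$ --- hence convex and piecewise linear in $r$ with at most $n$ breakpoints $r_k \geq \epsilon$, determined by the equation $1 + g_0(\tau;a_k,b_k) = \mathbbm{1}_{\{(b_k/a_k) \geq \tau\}} + g_0(b_k/a_k;a_k,b_k)$.

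Third, I would write down the left and right partial derivatives $\partial^{-}_r F(r_{k^*})$ and $\partial^{+}_r F(r_{k^*})$ in the same shape as in Proposition 3.1, the only change being that the index sets become $K_1 := \{k : 1 + g_0(\tau;a_{k^*},b_{k,k^*}) < \mathbbm{1}_{\{(b_{k,k^*}/a_{k^*}) \geq \tau\}} + g_0(b_{k,k^*}/a_{k^*};a_{k^*},b_{k,k^*})\}$ and $K_2$ the analogue with $\leq$ in the outer inequality. Since $F$ is convex in $r$ and the dual (SD2) has finite value, the subgradient optimality condition $\partial^{-}_r F(r_{k^*}) \leq 0 \leq \partial^{+}_r F(r_{k^*})$ holds at one of these breakpoints, or else the infimum over $r \geq 0$ is attained at $r \gtrsim 0$; SM performs the linear scan over the sorted breakpoints to locate $r_{k^*}$. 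Finally, I would run the two-dimensional grid search over $(\theta_i,\phi_j)$ restricted to $\sin(\theta_i)(\cos(\phi_j)+\sin(\phi_j)) > 0$, build the list $L$ of tuples $(\theta_i,\phi_j,r_{k^*})$, evaluate $F$ at each point, and return the minimum as the approximate solution to (SD2).

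The only genuine obstacle is bookkeeping: one must ensure that every event $\{b/a \leq \tau\}$ is replaced by $\{b/a \geq \tau\}$ and that the corresponding case split in Lemma 3.2 (critical point above versus below $\tau$) is threaded consistently through the derivative formulas and the definitions of $K_1$ and $K_2$. The convexity-in-$r$ argument, the $r$-independence of $x_k^*$, and the termination of the linear search are all inherited unchanged from Proposition 3.1.
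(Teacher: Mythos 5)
Your proposal is correct and matches the paper's own argument, which is literally ``replace $\mathbbm{1}_{\{x \leq \tau\}}$ with $\mathbbm{1}_{\{x \geq \tau\}}$, apply Lemma 3.2, and use the approach of Proposition 3.1''; you have simply spelled out the bookkeeping (the flipped indicator in the breakpoint equation and in $K_1$, $K_2$) that the paper leaves implicit. No gaps.
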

\begin{proof}
Replace $\mathbbm{1}_{\{x \leq \tau\}}$ with $\mathbbm{1}_{\{x \geq \tau\}}$, apply Lemma 3.2, and use the above approach.
\end{proof}

%

\begin{prop}
The solution to LFPM dual problem (SD3) can be computed (approximately) using a variation of the previous approach. 
For the variation, the list $L$ is now constructed by doing a linear search on $r \geq 0\,$ for each tuple $(\theta_i,\phi_j)$ to find the critical point $r^*_{ij}$ 
where the optimality condition for subgradient $\partial F(r^*_{ij})$ says that
\[
\partial^-_r F(r^*_{ij}) \leq 0 \leq \partial^+_r F(r^*_{ij})
\]
where the left and right partial derivatives of $F$ evaluated at $r = r^*_{ij}$ are
\begin{align*}
\partial^{-}_r F(r^*_{ij}) &= \alpha - \frac{\alpha_0}{n} \sum_k x^2_k + \frac{1}{n} \bigg[ \; \sum_{k} \beta_k - \sum_{k \in K_1} \frac{1}{4 (r^*_{ij})^2 \tilde{a}} \; \bigg], \\
\partial^{+}_r F(r^*_{ij}) &= \alpha - \frac{\alpha_0}{n} \sum_k x^2_k + \frac{1}{n} \bigg[ \; \sum_{k} \beta_k - \sum_{k \in K_2} \frac{1}{4 (r^*_{ij})^2 \tilde{a}} \; \bigg],
\end{align*}
where \; $\alpha_0 := \sin(\theta_i) \cos(\phi_j) $,\; $\alpha := \alpha_0 \delta + \cos(\theta_i) \mu + \sin(\theta_i) \sin(\phi_j) (\sigma^2 + \mu^2)$,\; $\tilde{a} := \sin(\theta_i) (\cos(\phi_j) + \sin(\phi_j)),\; \\
\tilde{b}_k := \sin(\theta_i) \cos(\phi_j) x_k - (\frac{1}{2}) \cos(\theta_i)\,$, $\beta_k =  (\tilde{b}^2_k / \tilde{a})$, 
\begin{align*}
K_1(r^*_{ij}) &:= \left\{ k: \bigg(\tau - \bigg(\frac{\tilde{b}_k}{\tilde{a}} - \frac{1}{4 r^*_{ij} \tilde{a}}\bigg)\bigg) \geq 0 \right\},\\
K_2(r^*_{ij}) &:= \left\{ k: \bigg(\tau - \bigg(\frac{\tilde{b}_k}{\tilde{a}} - \frac{1}{4 r^*_{ij} \tilde{a}}\bigg)\bigg) > 0 \right\}.
\end{align*}
\end{prop}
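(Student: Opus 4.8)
The plan is to follow the template of Proposition 3.1, replacing Lemma 3.1 by Lemma 3.3 and carefully tracking how the spherical substitution enters the positive-part term. Fix a grid point $(\theta_i,\phi_j)$ with $\tilde{a} := \sin(\theta_i)(\cos(\phi_j)+\sin(\phi_j)) > 0$. Setting $a := r\tilde{a}$ and $b := r\tilde{b}_k$, where $\tilde{b}_k := \sin(\theta_i)\cos(\phi_j)x_k - \tfrac{1}{2}\cos(\theta_i)$, the identity $-\lambda_1(x-x_k)^2 - \lambda_2 x - \lambda_3 x^2 = -\lambda_1 x_k^2 + g_0(x;a,b)$ together with Lemma 3.3 gives
\[
\Psi_k(r,\theta_i,\phi_j;\psi_\tau) = -r\alpha_0 x_k^2 + r\frac{\tilde{b}_k^2}{\tilde{a}} + \bigg(\tau - \frac{\tilde{b}_k}{\tilde{a}} + \frac{1}{4r\tilde{a}}\bigg)_{\!+},
\]
with $\alpha_0 := \sin(\theta_i)\cos(\phi_j)$. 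Substituting into (SD3) yields $F(r,\theta_i,\phi_j;\psi_\tau) = r\big(\alpha + \tfrac{1}{n}\sum_k(\beta_k - \alpha_0 x_k^2)\big) + \tfrac{1}{n}\sum_k(\tau - \tfrac{\tilde{b}_k}{\tilde{a}} + \tfrac{1}{4r\tilde{a}})_+$, where $\alpha$ and $\beta_k := \tilde{b}_k^2/\tilde{a}$ are as in the statement. First I would observe that on $r>0$ each positive-part summand is a nondecreasing convex function composed with the convex map $r\mapsto \tau - \tfrac{\tilde{b}_k}{\tilde{a}} + \tfrac{1}{4r\tilde{a}}$, so that $F(\cdot,\theta_i,\phi_j;\psi_\tau)$ is convex in $r$.

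Next I would locate the breakpoints of this piecewise-smooth convex function. The $k$-th positive part is active precisely when $\tau - \tfrac{\tilde{b}_k}{\tilde{a}} + \tfrac{1}{4r\tilde{a}} > 0$; for $\tilde{b}_k \le \tilde{a}\tau$ this holds for all $r>0$, while for $\tilde{b}_k > \tilde{a}\tau$ it holds iff $r < \tfrac{1}{4(\tilde{b}_k - \tilde{a}\tau)}$. Thus there are at most $n$ breakpoints, partitioning $(0,\infty)$ into at most $n+1$ intervals on each of which the active index set $K$ is constant and $F$ is $C^1$ with classical derivative $\alpha - \tfrac{\alpha_0}{n}\sum_k x_k^2 + \tfrac{1}{n}\big[\sum_k\beta_k - \sum_{k\in K}\tfrac{1}{4r^2\tilde{a}}\big]$. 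Evaluating the left and right limits of this derivative at a candidate point $r^*_{ij}$ --- where, moving leftward, the indices with $\tau - \tfrac{\tilde{b}_k}{\tilde{a}} + \tfrac{1}{4r^*_{ij}\tilde{a}} = 0$ switch on, giving the closed set $K_1$, and, moving rightward, they switch off, giving the open set $K_2$ --- produces the stated formulas for $\partial^-_r F(r^*_{ij})$ and $\partial^+_r F(r^*_{ij})$; since $K_1 \supseteq K_2$ and $\tilde{a}>0$ we get $\partial^-_r F(r^*_{ij}) \le \partial^+_r F(r^*_{ij})$, consistent with convexity.

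It then remains to argue that a finite search locates the minimizer. Since (SD3) has finite value (the classical bound (C3) is finite), $\inf_{r>0} F(r,\theta_i,\phi_j;\psi_\tau) > -\infty$, so the convex function $F(\cdot,\theta_i,\phi_j;\psi_\tau)$ either attains its infimum at some $r^*_{ij}>0$, characterised by $0\in\partial F(r^*_{ij})$, i.e.\ $\partial^-_r F(r^*_{ij}) \le 0 \le \partial^+_r F(r^*_{ij})$, or the infimum is approached at the boundary, in which case one takes $r^*_{ij}\gtrsim 0$. On each smooth interval the stationarity equation collapses to $\tfrac{|K|}{4n\tilde{a}\,r^2} = \alpha - \tfrac{\alpha_0}{n}\sum_k x_k^2 + \tfrac{1}{n}\sum_k\beta_k$, which has at most one admissible root, so sorting the breakpoints and checking, for each, the subgradient condition (and, in each intervening interval, the closed-form interior root) returns $r^*_{ij}$ in $O(n)$ work. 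Appending $(\theta_i,\phi_j,r^*_{ij})$ to the list $L$ over the angular grid restricted to $\tilde{a}>0$, evaluating $F$ at every element of $L$, and returning the minimum then gives the approximate solution to (SD3). The main obstacle, relative to Proposition 3.1, is precisely that here the activation of each positive-part term depends on $r$ through the shift $\tfrac{1}{4r\tilde{a}}$, so the breakpoints and the index sets $K_1, K_2$ are genuinely $r$-dependent; verifying the one-sided derivative formulas across these moving breakpoints, and that the convexity argument still yields a finite linear search including the boundary-at-zero case, is the crux of the argument.
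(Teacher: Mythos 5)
Your proof is correct and follows essentially the same route as the paper's: apply Lemma 3.3 under the spherical substitution $a=r\tilde{a}$, $b=r\tilde{b}_k$, establish convexity of $F$ in $r$, read off the one-sided derivatives from the $r$-dependent activation sets $K_1\supseteq K_2$, and invoke finiteness of (SD3) to guarantee a point with $0\in\partial F(r^*_{ij})$ before sweeping the angular grid. If anything, you are slightly more careful than the paper in noting that, because of the $\tfrac{1}{4r\tilde{a}}$ shift, the minimizer may lie strictly inside a smooth interval and can be found there from the closed-form stationarity equation rather than only at breakpoints.
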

\begin{proof}
Use Lemma 3.3 to deduce that the breakpoint for the value of $\Psi_k(r_{ij},\theta_i,\phi_j;\psi_\tau) = -r_{ij} \sin(\theta_i) \cos(\phi_j) x^2_k + g(a,b_k;\psi_\tau)$ for $a := r_{ij} \sin(\theta_i) (\cos(\phi_j) + \sin(\phi_j)),\; b_k := r_{ij} \sin(\theta_i) \cos(\phi_j) x_k - (\frac{r_{ij}}{2}) \cos(\theta_i),$ occurs when $(\tau - (\frac{\tilde{b}_k}{\tilde{a}} - \frac{1}{4 r_{ij} \tilde{a}})) = 0$. 
Note the functions $\Psi_k$ and hence $F$ are convex in $r$ for $(\theta_i,\phi_j)$ fixed. Also, under the constraint $r \sin(\theta) (\cos(\phi) + \sin(\phi)) > 0$, recall the dual problem (SD3) has finite value which implies $F$ has finite value below and hence a non-negative subgradient $\partial F$ for some $r_{ij}$ with $(\theta_i,\phi_j)$ fixed. Note that $\partial^-_r F(r_{ij})$ and $\partial^+_r F(r_{ij})$ only depend on $r_{ij}$ through the $K_{\{1,2\}}$ summation terms. Therefore, the asymptotic properties of $F$ are such that its subgradient $\partial F$ crosses zero as $r_{ij}$ sweeps from $0$ to $\infty$ (see \citet{singh2019distributionally} for a similar but more detailed argument). It follows that there exists a critical point $r^*_{ij} > 0$ such that $\partial^-_r F(r^*_{ij}) \leq 0 \leq \partial^+_r F(r^*_{ij})$ and hence $0 \in \partial F(r^*_{ij})$. 
As before, concatentate a list $L$ of tuples $(\theta_i,\phi_j,r^*_{ij})$ under the constraint $\sin(\theta_i) (\cos(\phi_j) + \sin(\phi_j)) > 0$, evaluate $F(r^*_{ij},\theta_i,\phi_j;\psi_\tau)$, and return the $\min$ as the (approximate) solution to dual problem (SD3). 
\end{proof}

%

\begin{prop}
The solution to UFPM dual problem (SD4) can be computed (approximately) by redefining the sets $K_{\{1,2\}}$, and using the approach described in Proposition 3.3.
In particular, the sets $K_{\{1,2\}}$ are now defined as
\begin{align*}
K_1(r^*_{ij}) &:= \left\{ k: \bigg( \bigg(\frac{\tilde{b}_k}{\tilde{a}} + \frac{1}{4 r^*_{ij} \tilde{a}}\bigg) - \tau \bigg) \geq 0 \right\},\\
K_2(r^*_{ij}) &:= \left\{ k: \bigg( \bigg(\frac{\tilde{b}_k}{\tilde{a}} + \frac{1}{4 r^*_{ij} \tilde{a}}\bigg) - \tau \bigg) > 0 \right\}.
\end{align*}
\end{prop}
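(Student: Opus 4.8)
The plan is to mirror the proof of Proposition 3.3 essentially verbatim, substituting Lemma 3.4 for Lemma 3.3 and tracking the sign change induced by the cost function $(x-\tau)_+$ in place of $(\tau-x)_+$. First I would fix a grid point $(\theta_i,\phi_j)$ and write, with $a := r_{ij}\sin(\theta_i)(\cos(\phi_j)+\sin(\phi_j))$ and $b_k := r_{ij}\sin(\theta_i)\cos(\phi_j)x_k - (\frac{r_{ij}}{2})\cos(\theta_i)$,
\[
\Psi_k(r_{ij},\theta_i,\phi_j;\psi_\tau) = -r_{ij}\sin(\theta_i)\cos(\phi_j)x_k^2 + g(a,b_k;\psi_\tau),
\]
and invoke Lemma 3.4 to get $g(a,b_k;\psi_\tau) = \frac{b_k^2}{a} + \big(\big(\frac{b_k}{a} + \frac{1}{4a}\big) - \tau\big)_+$. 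Since the ratio $\frac{b_k}{a} = \frac{\tilde{b}_k}{\tilde{a}}$ is independent of $r_{ij}$ (with $\tilde{a}$ and $\tilde{b}_k$ as in Proposition 3.3), the breakpoint in the value of $\Psi_k$ occurs precisely where $\big(\frac{\tilde{b}_k}{\tilde{a}} + \frac{1}{4 r_{ij}\tilde{a}}\big) - \tau = 0$; this is the point at which the index sets $K_1, K_2$ must be redefined as stated, with the inequalities oriented oppositely to Proposition 3.3 because the $(\cdot)_+$ in Lemma 3.4 activates for $\tau$ below (rather than above) the breakpoint.

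Next I would differentiate $F(r,\theta_i,\phi_j;\psi_\tau)$ in $r$ from the left and the right at a breakpoint $r^*_{ij}$. The term $\frac{b_k^2}{a}$ again contributes $\beta_k = \frac{\tilde{b}_k^2}{\tilde{a}}$, the smooth part ($\alpha$ together with the $-\frac{\alpha_0}{n}\sum_k x_k^2$ term) is unchanged from Proposition 3.3, and the activated $(\cdot)_+$ terms each contribute $-\frac{1}{4(r^*_{ij})^2\tilde{a}}$ for $k$ in the appropriate one of $K_1, K_2$; collecting these yields exactly the stated formulas for $\partial^-_r F(r^*_{ij})$ and $\partial^+_r F(r^*_{ij})$ with the redefined sets.

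The remainder of the argument is identical to Proposition 3.3. For $(\theta_i,\phi_j)$ fixed, each $\Psi_k$ is a pointwise maximum of affine functions of $r$, hence convex, so $F$ is convex in $r$; the constraint $r\sin(\theta_i)(\cos(\phi_j)+\sin(\phi_j)) > 0$ keeps (SD4) finite-valued, and since the one-sided derivatives depend on $r^*_{ij}$ only through the summation index sets $K_{\{1,2\}}$, the asymptotics force $\partial F$ to cross zero as $r$ sweeps from $0$ to $\infty$ (see \citet{singh2019distributionally} for the detailed version of this argument), giving a critical point $r^*_{ij} > 0$ with $\partial^-_r F(r^*_{ij}) \leq 0 \leq \partial^+_r F(r^*_{ij})$. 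Finally I would build the list $L$ of tuples $(\theta_i,\phi_j,r^*_{ij})$ over the grid subject to $\sin(\theta_i)(\cos(\phi_j)+\sin(\phi_j)) > 0$, evaluate $F(r^*_{ij},\theta_i,\phi_j;\psi_\tau)$ at each, and return the minimum as the approximate solution to (SD4).

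The main obstacle I expect is purely a matter of signs and orientation: because $(x-\tau)_+$ is increasing in $x$ while $(\tau-x)_+$ is decreasing, the active region of the $(\cdot)_+$ in Lemma 3.4 lies on the opposite side of $\tau$, which reverses the inequalities defining $K_1, K_2$ and flips where the $\frac{1}{4(r^*_{ij})^2\tilde{a}}$ term enters the one-sided derivatives — care is needed to get this exactly right, whereas everything else is a routine transcription. A secondary point worth checking is that the breakpoint equation $\big(\frac{\tilde{b}_k}{\tilde{a}} + \frac{1}{4 r_{ij}\tilde{a}}\big) - \tau = 0$ has at most one positive root in $r_{ij}$ for each $k$, so that the linear search over breakpoints remains well-defined.
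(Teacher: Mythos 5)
Your proposal is correct and follows exactly the route the paper takes: the paper's proof of this proposition simply says to redefine $K_{\{1,2\}}$, apply Lemma 3.4 in place of Lemma 3.3, and otherwise repeat the argument of Proposition 3.3, which is precisely what you have written out (with the sign/orientation bookkeeping for $(x-\tau)_+$ versus $(\tau-x)_+$ done correctly). No gaps; your version is just a more explicit transcription of the same argument.
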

\begin{proof}
Use the new definition for the sets $K_{\{1,2\}}$ and apply Lemma 3.4 instead of 3.3. Otherwise, the details are similar to those in the proof of Proposition 3.3 and are omitted.
\end{proof}

%

\begin{prop}
The solution to dual problem (SD5) can be computed (approximately) using a variation of the approach in Proposition 3.3.
For the variation, the list $L$ is now constructed by doing a linear search on $\{r : r \sin(\theta_i) (\cos(\phi_j) + \sin(\phi_j)) > 1\}$ for each tuple $(\theta_i,\phi_j)$ to find the extremal point $r^{*}_{ij}$
such that either $\partial_r F(r^*_{ij}) = 0$ or $r^*_{ij} \gtrsim 1/\sin(\theta_i) (\cos(\phi_j) + \sin(\phi_j))$ given $(\theta_i,\phi_j)$.
Furthermore, the continuous partial derivative of $F$ evaluated at $r = r^*_{ij}$ is
\begin{align*}
\partial_r F(r^*_{ij}) &= \alpha - \frac{\alpha_0}{n} \sum_k x^2_k + \frac{1}{n} \bigg[ \; \sum_{k} \beta_k - \sum_{k \in K_1} \frac{\tilde{a}C_k}{(r^*_{ij} \tilde{a} - 1)^2}  \; \bigg], 
\end{align*}
where \; $\alpha_0 := \sin(\theta_i) \cos(\phi_j) $,\; $\alpha := \alpha_0 \delta + \cos(\theta_i) \mu + \sin(\theta_i) \sin(\phi_j) (\sigma^2 + \mu^2)$,\; $\tilde{a} := \sin(\theta_i) (\cos(\phi_j) + \sin(\phi_j)),\; \\
\tilde{b}_k := \sin(\theta_i) \cos(\phi_j) x_k - (\frac{1}{2}) \cos(\theta_i)\,$, $\beta_k =  (\tilde{b}^2_k / \tilde{a}),\, C_k = (\tau - \tilde{b}_k / \tilde{a})^2$, 
\begin{align*}
K_1 &:= \left\{ k: \bigg(\tau - \frac{\tilde{b}_k}{\tilde{a}} \bigg) > 0 \right\}.
\end{align*}
\end{prop}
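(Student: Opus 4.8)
The plan is to adapt the proof of Proposition 3.3, replacing Lemma 3.3 by Lemma 3.5. First I would use Lemma 3.5 to evaluate the summands of (SD5): with $a := r\sin(\theta_i)(\cos(\phi_j)+\sin(\phi_j))$ and $b_k := r\sin(\theta_i)\cos(\phi_j)x_k - (r/2)\cos(\theta_i)$, and on the feasible region $a>1$ (equivalently $r\sin(\theta_i)(\cos(\phi_j)+\sin(\phi_j))>1$, which is exactly what guarantees a finite value of $\Psi_k$ given the $a>1$ hypothesis of Lemma 3.5), one obtains
\[
\Psi_k(r,\theta_i,\phi_j;\psi_\tau) = -r\sin(\theta_i)\cos(\phi_j)\,x_k^2 + \frac{b_k^2}{a} + \frac{a}{a-1}\Big(\tau - \frac{b_k}{a}\Big)^2_+ .
\]
The key structural point, exactly as in Proposition 3.3, is that $b_k/a = \tilde b_k/\tilde a$ is independent of $r$, so the active index set $K_1 = \{k : \tau - \tilde b_k/\tilde a > 0\}$ is $r$-independent; this is what makes $F(\cdot,\theta_i,\phi_j;\psi_\tau)$ \emph{continuously} differentiable in $r$ on the feasible half-line, with no kinks. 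Substituting $b_k^2/a = r\beta_k$, $(\tau-b_k/a)^2_+ = C_k\mathbbm{1}_{\{k\in K_1\}}$ and $a/(a-1) = r\tilde a/(r\tilde a - 1)$ yields $F = r\alpha - \tfrac{r\alpha_0}{n}\sum_k x_k^2 + \tfrac{r}{n}\sum_k\beta_k + \tfrac{1}{n}\sum_{k\in K_1}\tfrac{r\tilde a}{r\tilde a - 1}C_k$, and differentiating, using $\tfrac{d}{dr}\big(\tfrac{r\tilde a}{r\tilde a - 1}\big) = -\tilde a/(r\tilde a - 1)^2$, produces exactly the stated formula for $\partial_r F(r^*_{ij})$.

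Next I would establish that the extremal point exists. Each $\Psi_k$ is a pointwise supremum over $x$ of functions that are affine in $r$ (since $\lambda_1,\lambda_2,\lambda_3$ are linear in $r$), hence $\Psi_k$, and therefore $F(\cdot,\theta_i,\phi_j;\psi_\tau)$, is convex in $r$ on $\{r : r\tilde a > 1\}$; thus $\partial_r F$ is nondecreasing. Because the dual problem (SD5) has finite value, $\inf_r F(r,\theta_i,\phi_j;\psi_\tau)$ is finite, so convexity forces $\partial_r F$ to be nonnegative for all large $r$. Combining this with the behaviour near the boundary — when $K_1\neq\emptyset$, the term $\tfrac{r\tilde a}{r\tilde a-1}C_k\to\infty$ as $r\tilde a\to 1^+$, so the minimizer is strictly interior and $\partial_r F$ crosses zero at some $r^*_{ij}$; when $K_1=\emptyset$, $F$ is linear in $r$ and boundedness forces a nonnegative slope, so the infimum is approached at $r^*_{ij}\gtrsim 1/(\sin(\theta_i)(\cos(\phi_j)+\sin(\phi_j)))$ — pins down which of the two alternatives in the statement occurs; I would cite \citet{singh2019distributionally} for the analogous monotone-crossing argument, as in Proposition 3.3.

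Finally, assembling the algorithm: perform the two-dimensional grid search in $(\theta,\phi)$ restricted to the cone $\sin(\theta_i)(\cos(\phi_j)+\sin(\phi_j))>0$; for each grid point run a one-dimensional linear search over $\{r : r\sin(\theta_i)(\cos(\phi_j)+\sin(\phi_j))>1\}$ to locate $r^*_{ij}$ with $\partial_r F(r^*_{ij})=0$ (or default to $r^*_{ij}\gtrsim 1/\tilde a$); concatenate the list $L$ of tuples $(\theta_i,\phi_j,r^*_{ij})$, evaluate $F$ at each, and return the minimum as the approximate solution. I expect the only genuinely non-routine step to be the boundary analysis near $r\tilde a = 1$: unlike (SD1)--(SD4), here the $a>1$ requirement of Lemma 3.5 creates a true singular boundary of $F$, so one must argue carefully — splitting on whether $K_1$ is empty — that $\partial_r F$ actually attains (or crosses) zero rather than $F$ decreasing all the way down to the boundary.
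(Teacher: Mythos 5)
Your proposal is correct and follows essentially the same route as the paper's proof: apply Lemma 3.5, observe that $b_k/a = \tilde b_k/\tilde a$ is independent of $r$ so that $K_1$ is fixed and $F$ is continuously differentiable in $r$, derive $\partial_r F$ by the substitutions $b_k^2/a = r\beta_k$ and $a/(a-1) = r\tilde a/(r\tilde a - 1)$, and use convexity in $r$ plus finiteness of (SD5) to locate $r^*_{ij}$ before assembling the grid search. Your explicit split on whether $K_1$ is empty (blow-up at $r\tilde a \to 1^+$ versus linearity in $r$) is a slightly more careful justification of the two alternatives than the paper gives, but it is the same argument.
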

\begin{proof}
Use Lemma 3.5 to deduce that the breakpoint for the value of $\Psi_k(r_{ij},\theta_i,\phi_j;\psi_\tau) = -r_{ij} \sin(\theta_i) \cos(\phi_j) x^2_k + g(a,b_k;\psi_\tau)$ for $a := r_{ij} \sin(\theta_i) (\cos(\phi_j) + \sin(\phi_j)),\; b_k := r_{ij} \sin(\theta_i) \cos(\phi_j) x_k - (\frac{r_{ij}}{2}) \cos(\theta_i),$ occurs when $(\tau - \frac{\tilde{b}_k}{\tilde{a}} ) = 0$. 
Note the functions $\Psi_k$ and hence $F$ are convex in $r$ for $(\theta_i,\phi_j)$ fixed. Also, under the constraint $r \sin(\theta) (\cos(\phi) + \sin(\phi)) > 1$, recall the dual problem (SD5) has finite value which implies $F$ has finite value below and hence a non-negative partial derivative $\partial_r F$ for some $r_{ij}$ with $(\theta_i,\phi_j)$ fixed. Note that $\partial_r F$ is continuous and only depends on $r_{ij}$ through the $K_1$ summand. It follows that there exists an extremal point $r^{*}_{ij}$ such that either $\partial_r F(r^*_{ij}) = 0$ or $r^*_{ij} \gtrsim 1/\sin(\theta_i) (\cos(\phi_j) + \sin(\phi_j))$ given $(\theta_i,\phi_j)$. 
Once again, concatentate a list $L$ of tuples $(\theta_i,\phi_j,r^{*}_{ij})$ under the constraint $\sin(\theta_i) (\cos(\phi_j) + \sin(\phi_j)) > 0$, evaluate $F(r^{*}_{ij},\theta_i,\phi_j;\psi_\tau)$, and return the $\min$.
\end{proof}

%

\begin{prop}
The solution to UFPM dual problem (SD6) can be computed (approximately) by redefining the set $K_1$, and using the approach described in Proposition 3.5.
In particular, the set $K_1$ is now defined as
\begin{align*}
K_1 &:= \left\{ k: \bigg( \frac{\tilde{b}_k}{\tilde{a}} - \tau \bigg) > 0 \right\}.
\end{align*}
\end{prop}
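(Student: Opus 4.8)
The plan is to mirror the argument of Proposition 3.5 verbatim, substituting Lemma 3.6 for Lemma 3.5 and reversing the orientation of the branch-switch inequality. First I would write $\Psi_k(r_{ij},\theta_i,\phi_j;\psi_\tau) = -r_{ij}\sin(\theta_i)\cos(\phi_j)\,x_k^2 + g(a,b_k;\psi_\tau)$ with $a := r_{ij}\sin(\theta_i)(\cos(\phi_j)+\sin(\phi_j))$ and $b_k := r_{ij}\sin(\theta_i)\cos(\phi_j)\,x_k - (r_{ij}/2)\cos(\theta_i)$, and apply Lemma 3.6 to evaluate $g$. Because Lemma 3.6 (the $(x-\tau)^2_+$ cost) switches from $\tfrac{b^2}{a}$ to $\tfrac{b^2}{a} + \tfrac{a}{a-1}(\tfrac{b}{a}-\tau)^2_+$ exactly at $\tau = b/a$, and because $b_k/a = \tilde b_k/\tilde a$ is independent of $r_{ij}$ (the radial factor $r_{ij}$ cancels), the relevant breakpoint condition is $\tilde b_k/\tilde a - \tau = 0$; this is precisely the redefined index set $K_1 := \{k : \tilde b_k/\tilde a - \tau > 0\}$. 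The single substantive change from Proposition 3.5 is this sign flip: the rational branch of $g$ is now active for $\tau < b/a$ rather than $\tau > b/a$, so every occurrence of $(\tau - \tilde b_k/\tilde a)$ in the index sets and in $C_k$ is replaced by $(\tilde b_k/\tilde a - \tau)$.

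Next I would re-derive the radial derivative exactly as in Proposition 3.5. On the feasible cone $\{r : r\sin(\theta_i)(\cos(\phi_j)+\sin(\phi_j)) > 1\}$, each $\Psi_k$ is a pointwise maximum of affine functions of $r$, hence convex in $r$; differentiating $F$ in $r$ with the active rational branch of $g$ gives
\[
\partial_r F(r^*_{ij}) = \alpha - \frac{\alpha_0}{n}\sum_k x_k^2 + \frac{1}{n}\Bigl[\,\sum_k \beta_k - \sum_{k \in K_1} \frac{\tilde a\,C_k}{(r^*_{ij}\tilde a - 1)^2}\,\Bigr],
\]
with $\beta_k = \tilde b_k^2/\tilde a$ and $C_k = (\tilde b_k/\tilde a - \tau)^2$ and $\alpha,\alpha_0,\tilde a,\tilde b_k$ as in Proposition 3.5. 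Since the contributing branch of $g$ is $C^1$ in its argument, $\partial_r F$ is continuous in $r$, and it depends on $r^*_{ij}$ only through membership of the finite index set $K_1$, so it is a smooth function with finitely many pieces.

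Then I would invoke finiteness of the dual problem (SD6): for fixed $(\theta_i,\phi_j)$ the map $r \mapsto F(r,\theta_i,\phi_j;\psi_\tau)$ is convex and bounded below, so by convexity plus continuity of $\partial_r F$ there is an extremal point $r^*_{ij}$ with either $\partial_r F(r^*_{ij}) = 0$ or $r^*_{ij} \gtrsim 1/\sin(\theta_i)(\cos(\phi_j)+\sin(\phi_j))$ (the boundary of the feasible cone). A linear search over the at most $n$ candidate breakpoints of $K_1$, together with the boundary value, locates $r^*_{ij}$. Finally, performing the two-dimensional grid search over $(\theta,\phi)$ subject to $\sin(\theta_i)(\cos(\phi_j)+\sin(\phi_j)) > 0$, I would assemble the list $L$ of tuples $(\theta_i,\phi_j,r^*_{ij})$, evaluate $F(r^*_{ij},\theta_i,\phi_j;\psi_\tau)$ at each, and return the minimum as the approximate solution.

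I expect the only genuine obstacle is bookkeeping the sign reversal consistently — confirming that with $(x-\tau)^2_+$ in place of $(\tau-x)^2_+$ Lemma 3.6 indeed produces the branch switch at $\tau = b/a$ with the ``$<$'' orientation, so that $K_1 = \{k : \tilde b_k/\tilde a - \tau > 0\}$ is correct and the derivative formula of Proposition 3.5 transfers after the substitution $C_k = (\tilde b_k/\tilde a - \tau)^2$. Everything else (convexity in $r$, continuity of $\partial_r F$, existence of $r^*_{ij}$ from finiteness of the dual, the grid-search wrapper) is identical to Proposition 3.5 and requires no new idea; hence the proof can be stated briefly by reference.
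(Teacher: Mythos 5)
Your proposal is correct and follows exactly the route the paper takes: the paper's own proof is simply ``use the new definition of $K_1$, apply Lemma 3.6 instead of Lemma 3.5, and otherwise follow the proof of Proposition 3.5,'' which is precisely your plan, only you have written out the sign flip, the radial derivative, and the convexity/finiteness argument explicitly. No gaps; your version is just a more detailed rendering of the same argument.
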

\begin{proof}
Use the new definition for the set $K_1$ and apply Lemma 3.6 instead of 3.5. Otherwise, follow the approach in the proof of Proposition 3.5.
\end{proof}

\endgroup
\section{Applications}
Let us now investigate some practical applications of the theory and algorithms developed in this work. A couple of examples from inventory control and financial markets are considered. In general, one can compare ``delta trajectories" to reach the classical limits, across moment problems and/or across data sets, on a relative basis. A mapping between Wasserstein distance $\delta$ and (statistical) confidence level $\beta = 0.95$ is done via the relation \ref{R1} given in Section 1.3.2. The DD and SM methods, from Sections 2.4 and 3.3, are used to evaluate $y_\theta$ and $y_{\{\theta_i,\phi_j\}}$ respectively. The algorithms are adapted to solve the particular moment problems of interest (e.g. zeroth partial moment (ZPM), first partial moment (FPM), and second partial moment (SPM), for lower and/or upper tail). The algorithms are coded in Matlab and make use of standard functions such as \textit{bisection}, \textit{fminbnd}, \textit{intersectLines}. No special Matlab toolboxes are needed (although parallel computing via \textit{parfor} loops for SM requires use of that toolbox). Results of the two methods are compared vs.\ each other and vs.\ the classical results (without the distributional ambiguity constraint) for consistency. It is useful to implement both methods to provide an additional check to the calculations. Note that SM gives results consistent with the DD method, within 1e-3 (on these examples), for a $(\theta,\phi)$ mesh of 750 by 750 grid points on the $[0,\pi] x [0,2 \pi)$ domain. As the mesh partition gets finer, the DD and SM results converge. 

\subsection{Two Point Example}
Consider a two point example with $x = \{10, 12\}$ which implies $\mu = 11$ and $\sigma = 1$. For the lower Chebyshev-Cantelli (LC) problem, with $\tau = \tau_1 = \mu$, the classical result is $\Pr(X \leq \tau) = 1$. For the upper Chebyshev-Cantelli (UC) problem, with $\tau = \tau_2 = \mu+1/2$, the classical result is $\Pr(X \geq \tau) = 0.8$. Let us investigate the solution trajectory for the robust problems as a function of distributional ambiguity, $\delta$.  For the LC problem, figure 1 shows a plot of the $U_{\{1,2\}}$ and $L_{\{1,2\}}$ lines across which the $\Psi_{\{1,2\}}$ functions and hence F change value. Figure 2 shows the solutions to the robust LC and UC problems, using the directional descent (DD) method, as a function of $\delta$. One can see that for the LC problem, the robust solution approaches the classical solution (CC) at $\delta^* \approx 2$. For the UC problem, the robust solution approaches the classical solution (CC) at $\delta^* \approx 1$. 

\begin{figure}[H]
	\centering
	\caption{DD Method: ZPM Plots for $\xi=1$}%
	\subfloat[U and L Lines]{\scalebox{0.2}[0.15]{\includegraphics{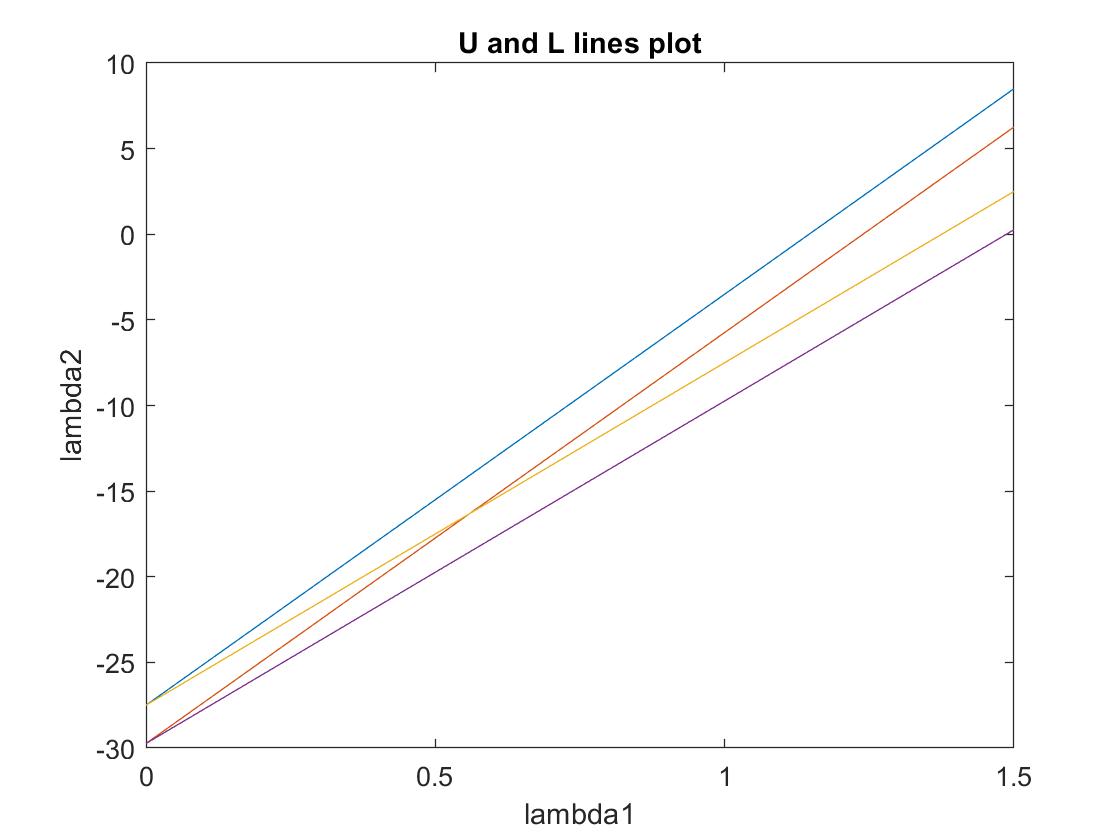}}}%
	\quad
	\subfloat[Surface Plot]{\scalebox{0.225}[0.15]{\includegraphics{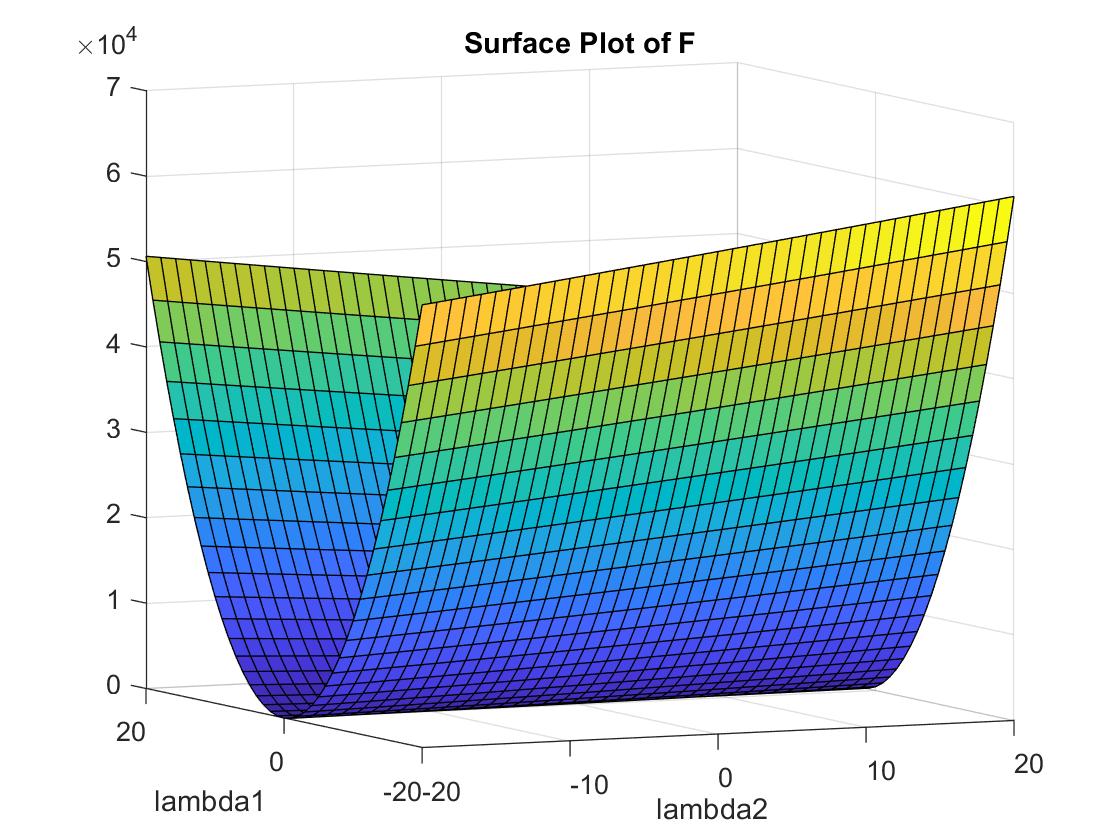}}}%
\end{figure}

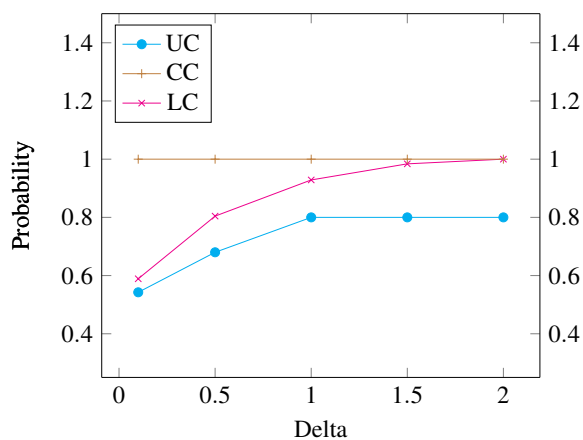
\begin{figure}[H]
\caption{Chebyshev-Cantelli Probabilities}
\begin{center}
\scalebox{0.85}{
\begin{tikzpicture}
	\begin{axis}[legend pos=north west,
		xlabel=Delta,
		ylabel=Probability,
		ymin = 0.25, ymax = 1.5,
		axis y line* = left ]
	\addplot[color=magenta,mark=x] coordinates {
		(0.1,0.589)
		(0.50,0.8043)
		(1.0,0.9286)
		(1.5,0.9839)
		(2.0,1.0)
	}; \label{plot1_y1}
	\end{axis}

	\begin{axis}[legend pos=north west,
		xlabel=Delta,
		ylabel=Probability,
		ymin = 0.25, ymax = 1.5,
		axis y line* = right,
		axis x line = none ]
	\addplot[color=cyan,mark=*] coordinates {
		(0.1,0.5425)
		(0.50,0.68)
		(1.0,0.8)
		(1.5,0.8)
		(2.0,0.80)
	}; \label{plot1_y2}

	\addplot[color=brown,mark=+] coordinates {
		(0.1,1.0)
		(0.50,1.0)
		(1.0,1.0)
		(1.5,1.0)
		(2.0,1.0)
	}; \label{plot1_y3}

    \addlegendimage{/pgfplots/refstyle=plot1_y1}\addlegendentry{UC}
	\addlegendimage{/pgfplots/refstyle=plot1_y2}\addlegendentry{CC}
	\addlegendimage{/pgfplots/refstyle=plot1_y3}\addlegendentry{LC}
	\end{axis}
\end{tikzpicture}
}
\end{center}
\end{figure}

\subsection{Inventory Control}

Consider the historical data set (in units of millions) in Table 4 for Apple iPhones sales, taken from the statista website \citep{statistaPhones}. Note that Apple stopped reporting iPhone sales in 2019. Let us investigate the robust probability and number of stockouts (lost sales). Here $\mu = 122.345, \sigma = 85.326$. Matlab calculates the quantiles as shown in Table 5. Let us perform a stockout analysis by setting $\tau = 221.77$ (the $90\textsuperscript{th}$ percentile) and calculating the robust upper zeroth and first partial moments. Figures 4 and 6 show the solutions using the DD algorithm. Results were cross-checked using SM. Using the empirical (reference) data set, the expected annual lost sales, for an order quantity $\tau$, is 0.7875 million units. Robust estimates for probability and number of stockouts at $\beta = 0.95$ (which corresponds to $\delta \approx 290$ via \ref{R1} with $r \approx 231$) would be 38\% and 4.45 million units respectively. The classical limits are 42.4\% and 15.8 million units. To go further, one could extend our framework to construct worst case distributions, as a function of $\delta$, to ``back out" the sales distributions that give rise to the corresponding level of stockouts given by the solution to the dual problem (D4). See \citet{singh2020robust} for further details. 

\begin{table}[!htb]
\begin{center}
\caption{Apple iPhone Historical Sales (Worldwide)}
\begin{tabular}{ |c|c|c|c|c|c|c|c|c|c|c|c|c| }
 \hline
Year & 2007 & 2008 & 2009 & 2010 & 2011 & 2012 & 2013 & 2014 & 2015 & 2016 & 2017 & 2018 \\
 \hline
Sales & 1.39 & 11.63 & 20.73 & 39.99 & 72.29 & 125.05 & 150.26 & 169.22 & 231.22 & 211.88 & 216.76 & 217.72 \\
 \hline
\end{tabular}
\end{center}
\end{table}

\begin{table}[!htb]
\begin{center}
\caption{Quantiles: Apple iPhone Sales}
\begin{tabular}{ |c|c|c|c|c|c|c|c|c|c|c|c|c| }
 \hline
Year & 0.1 & 0.2 & 0.3 & 0.4 & 0.5 & 0.6 & 0.7 & 0.8  & 0.9 & 1.0 \\
 \hline
Sales & 8.558 & 19.82 & 43.22 & 88.118 & 137.655 & 163.532 & 207.614 & 216.856 & 221.77 & 231.22 \\
 \hline
\end{tabular}
\end{center}
\end{table}

\begin{figure}[!htb]
	\centering
	\caption{DD Method: ZPM Plots for $\xi=1$}%
	\subfloat{\scalebox{0.2}[0.2]{\includegraphics{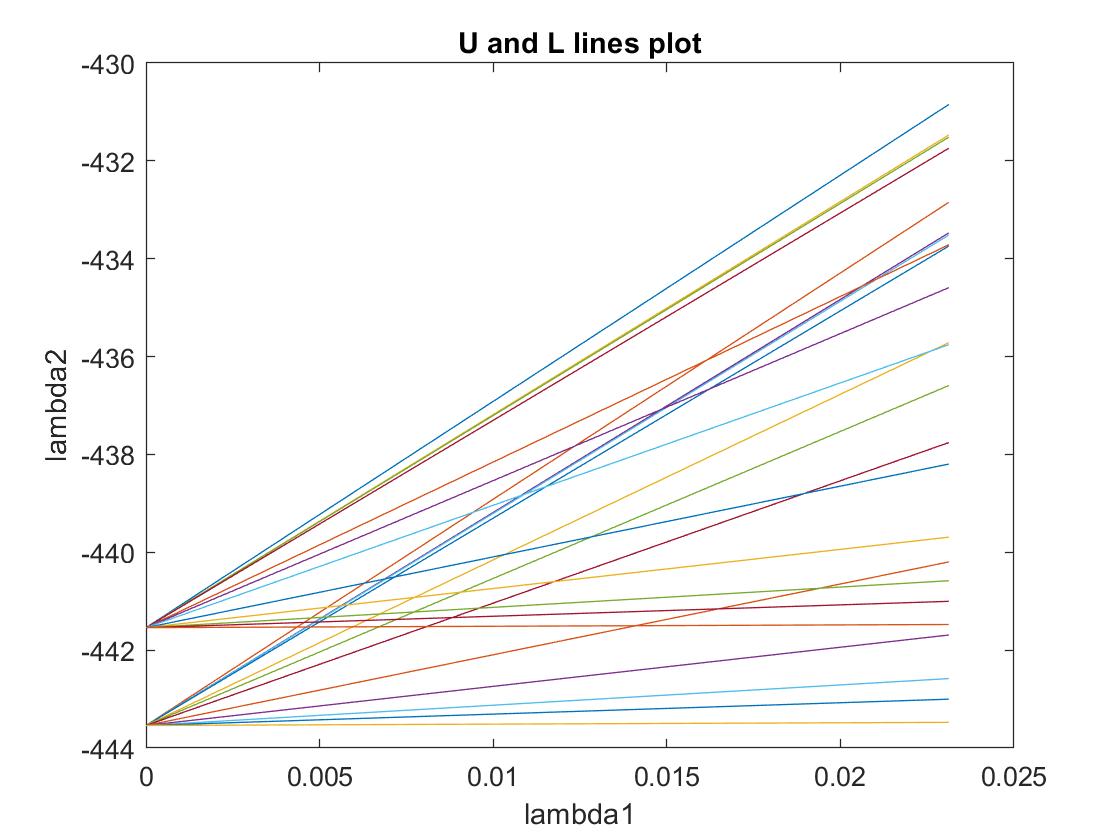}}}%
	\quad
	\subfloat{\scalebox{0.225}[0.2]{\includegraphics{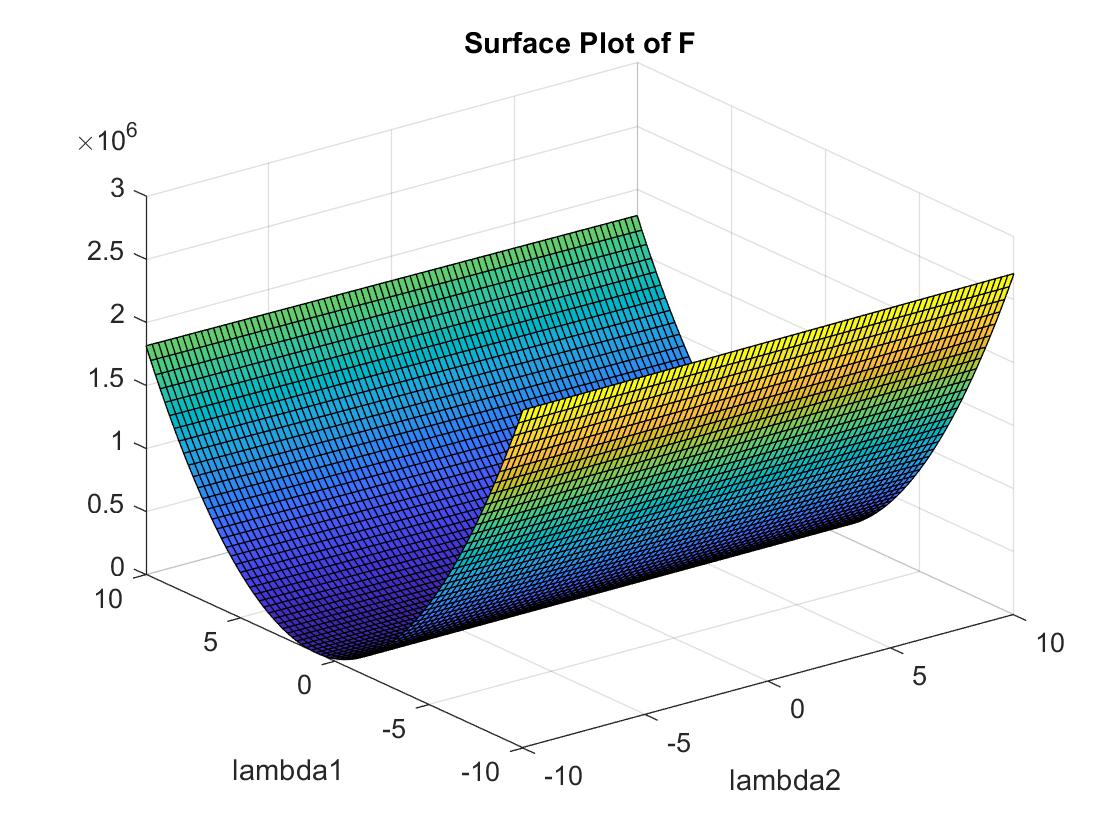}}}%
\end{figure}

\begin{figure}[H]
\caption{Probability of Stockout at $90\textsuperscript{th}$ Percentile }
\begin{center}
\scalebox{0.9}{
\begin{tikzpicture}[scale=0.975]
	\begin{axis}[legend pos=north west,
		xlabel=Delta (thousands),
		ylabel=Lost Sales (mn),
		ymin = 0.0, ymax = 0.60 ]
	\addplot[color=brown,mark=x] coordinates {
		(.001,0.1368)
		(.005,0.2402)
		(.010,0.2751)
		(.025,0.3334)
		(.100,0.3518)
		(.250,0.3762)
		(.500,0.3984)
		(.600,0.4040)
		(.800,0.4119)
		(.900,0.4147)
		(1.000,0.417)
		(1.200,0.42)
		(1.650,0.424)
	}; \label{plot12_y1}

	\addplot[color=green,mark=x] coordinates {
		(.001,0.424)
		(.100,0.424)
		(.250,0.424)
		(.500,0.424)
		(.600,0.424)
		(.800,0.424)
		(.900,0.424)
		(1.000,0.424)
		(1.200,0.424)
		(1.650,0.424)
	}; \label{plot12_y2}

	\addlegendimage{/pgfplots/refstyle=plot12_y1}\addlegendentry{Stockouts}
	\addlegendimage{/pgfplots/refstyle=plot12_y2}\addlegendentry{Classical}
	\end{axis}	
\end{tikzpicture}
}
\end{center}
\end{figure}
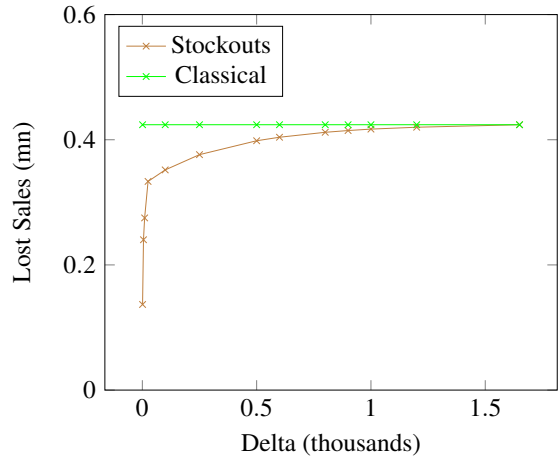

\begin{figure}[!htb]
	\centering
	\caption{DD Method: FPM Plots for $\xi=1$}%
	\subfloat{\scalebox{0.2}[0.2]{\includegraphics{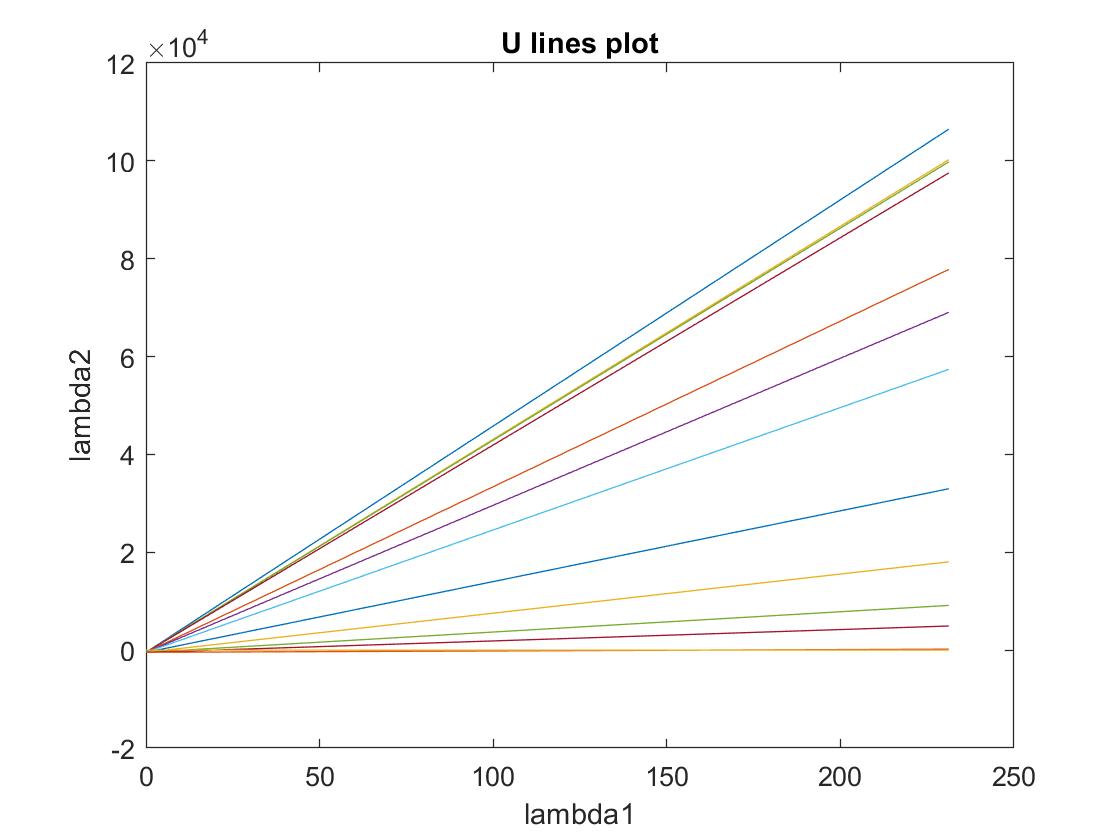}}}%
	\quad
	\subfloat{\scalebox{0.225}[0.2]{\includegraphics{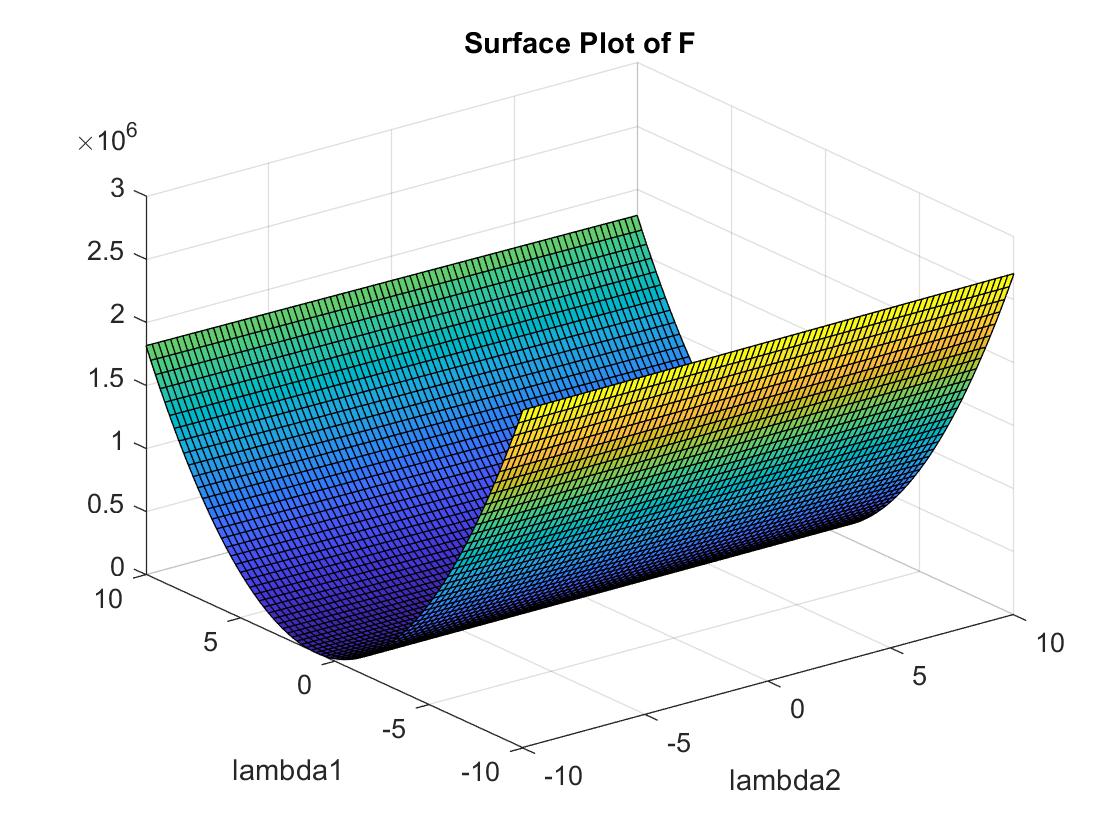}}}%
\end{figure}

\begin{figure}[H]
\caption{Number of Stockouts at $90\textsuperscript{th}$ Percentile }
\begin{center}
\scalebox{0.9}{
\begin{tikzpicture}[scale=0.975]
	\begin{axis}[legend pos=north west,
		xlabel=Delta (thousands),
		ylabel=Lost Sales (mn),
		ymin = 0.0, ymax = 25.0 ]
	\addplot[color=brown,mark=x] coordinates {
		(0.001,1.05)
		(0.010,1.59)
		(0.100,3.485)
		(0.500,6.844)
		(1,9.06)
		(2,11.98)
		(3,13.35)
		(4,14.38)
		(4.5,14.80)
		(5,15.24)
		(6,15.80)
	}; \label{plot2_y1}

	\addplot[color=green,mark=x] coordinates {
		(0.001,15.8)
		(0.010,15.8)
		(0.100,15.8)
		(0.500,15.8)
		(1,15.8)
		(2,15.8)
		(3,15.8)
		(4,15.80)
		(4.5,15.80)
		(5,15.80)
		(6,15.80)
	}; \label{plot2_y2}

	\addlegendimage{/pgfplots/refstyle=plot2_y1}\addlegendentry{Stockouts}
	\addlegendimage{/pgfplots/refstyle=plot2_y2}\addlegendentry{Classical}
	\end{axis}
\end{tikzpicture}
}
\end{center}
\end{figure}
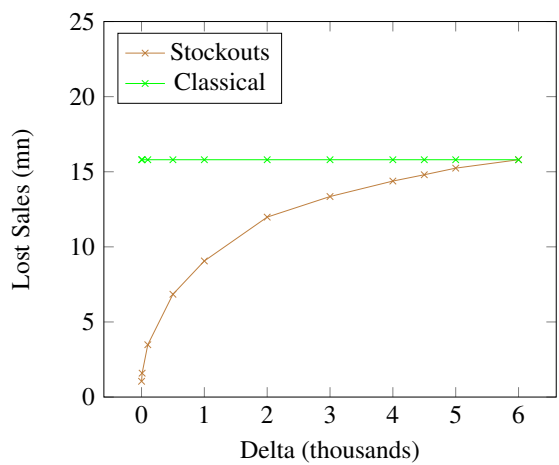

\subsection{Investment Portfolio}
Basket trading involves simultaneous trading of a basket of stocks. This example looks at the trajectory of semideviation of monthly portfolio returns for a small, equal dollar-weighted ``buy and hold" basket of U.S. equities from the S\&P 500 index used in the statistical arbitrage study by \citep{singh2020robust}. Data is sourced from the Yahoo finance website. Table 6 below lists the stock tickers, names, and industries. Table 7 displays a \textit{partial} listing of the 5y historical market data set (of 60 month end stock prices) from September 2015 through September 2020 used in this study. For convenience, and ease of interpretation, the monthly portfolio returns are scaled up by a factor of 100, so that a 1\% return is recorded in the data set as 1 instead of 0.01. This rescaling of the data only affects interpretation of the ambiguity parameter, $\delta$. For both the lower and upper second partial moment problems, (LSPM and USPM respectively), we set $\tau = \mu = 1.11\%$; note that $\sigma = 9.43\%$ for this data set. Figure 7 shows the solutions using SM algorithm, as a function of $\delta$. Results were cross-checked using the DD method. A robust number for lower semideviation at $\beta = 0.95$ (which corresponds to $\delta \approx 15.4$ via \ref{R1} with $r \approx 30\%$) would be 8.13\%.
For LSPM, the lower semideviation result approaches the classical solution (CSPM) of $\sigma = 9.43\%$ at $\delta^* \approx 175$. A robust number for upper semideviation at $\beta = 0.95$ (which corresponds to $\delta \approx 22.1$ via \ref{R1} with $r \approx 43.3\%$) would be 8.8\%. 
For USPM, the upper semideviation result approaches the classical solution (CSPM) of $\sigma = 9.43\%$ at $\delta^* \approx 160$. Both trajectories start to flatten out around $\delta = 75$, which corresponds to semideviation $\approx 9.2\%$. One can infer that semideviation beyond this is remote; both of these curves exhibit a ``long right tail" tail that slowly converges to the classical solution. More precisely, for LSPM $\delta \approx 24.6$ corresponds to $\beta = 0.999$ and for USPM $\delta \approx 35.3$ corresponds to $\beta = 0.999$.

\begin{table}[!htb]
\begin{center}
\caption{Basket Constituents}
\begin{tabular}{ |c|c|c|c| }
 \hline
  Ticker & Name & Industry & Market Cap (bn) \\
 \hline
 APA & Apache Corporation & Energy: Oil and Gas & 4.11 \\
 \hline
 AXP & American Express Company & Credit Services & 76.80 \\
 \hline
 CAT & Caterpillar Inc. & Farm Machinery & 78.60 \\
 \hline
 COF & Capital One Financial Corp. & Credit Services & 31.10 \\
 \hline
 FCX & Freeport-McMoRan Inc. & Copper & 22.35 \\
 \hline
 IBM & 1nternational Business Machines Corp. & Technology & 105.17 \\
 \hline
 MMM & 3M Company & Industrial Machinery & 92.41 \\
 \hline
\end{tabular}
\end{center}
\end{table}

\begin{table}[H]
\begin{center}
\caption{Basket 2020 Market Data}
\begin{tabular}{ |r|r|r|r|r|r|r|r|r|r| }
 \hline
Date & 01/01 & 02/01 & 03/01 & 04/01 & 05/01 & 06/01 & 07/01 & 08/01 & 09/01 \\
 \hline
 APA & 27.10 & 24.80 & 4.16 & 13.02 & 10.77 & 13.48 & 15.32 & 14.8 & 11.1 \\
 \hline
 AXP & 128.13 & 108.83 & 84.75 & 90.34 & 94.64 & 94.77 & 92.90 & 101.60 & 95.56 \\
 \hline 
 CAT & 128.29 & 122.19 & 114.13 & 114.46 & 119.24 & 125.56 & 131.89 & 142.31 & 144.38 \\
 \hline
 COF & 98.61 & 87.20 & 50.01 & 64.23 & 67.49 & 62.50 & 63.70 & 68.92 & 68.13 \\
 \hline
 FCX & 11.06 & 9.96 & 6.75 & 8.83 & 9.07 & 11.57 & 12.92 & 15.61 & 14.77 \\
 \hline
 IBM & 138.55 & 125.46 & 108.05 & 122.30 & 121.65 & 119.21 & 121.35 & 121.72 & 118.83 \\
 \hline
 MMM & 154.29 & 145.13 & 133.95 & 149.07 & 153.51 & 154.58 & 149.11 & 161.55 & 159.51 \\
 \hline
\end{tabular}
\end{center}
\end{table}

\begin{figure}[H]
	\centering
	\caption{DD Method: SPM Plots for $\xi=1$}%
	\subfloat{\scalebox{0.2}[0.25]{\includegraphics{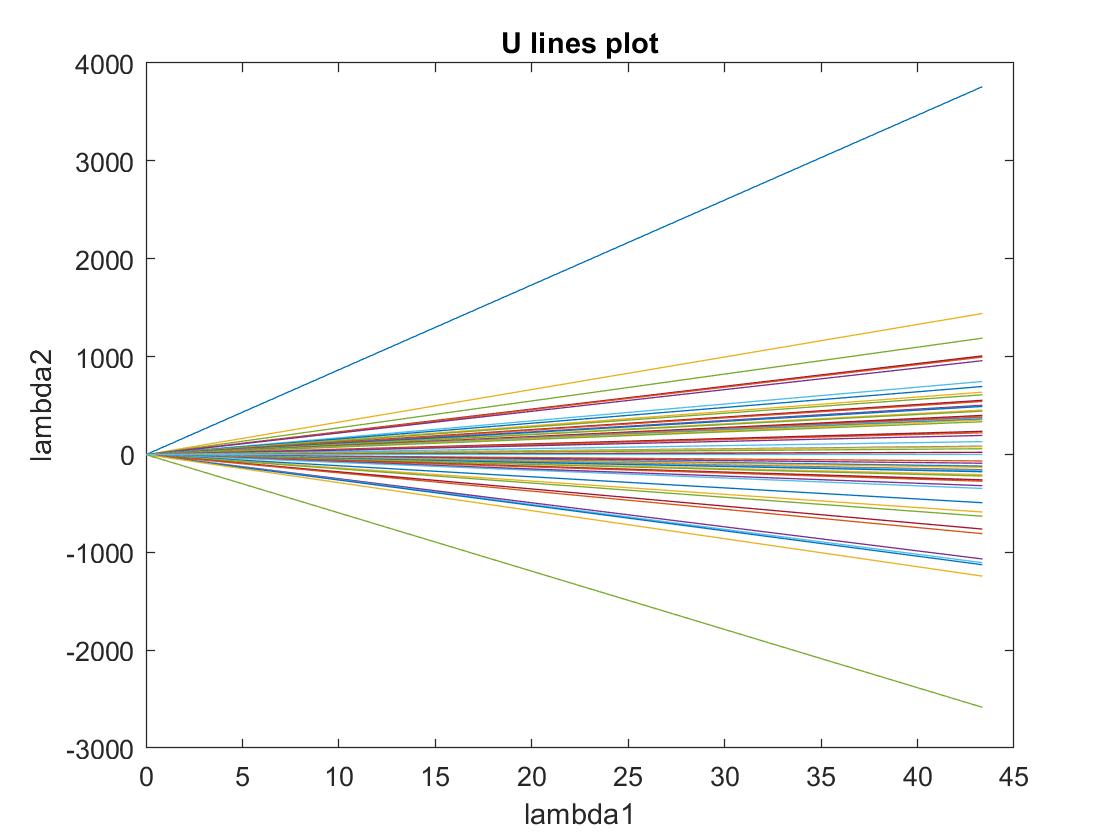}}}%
	\quad
	\subfloat{\scalebox{0.225}[0.25]{\includegraphics{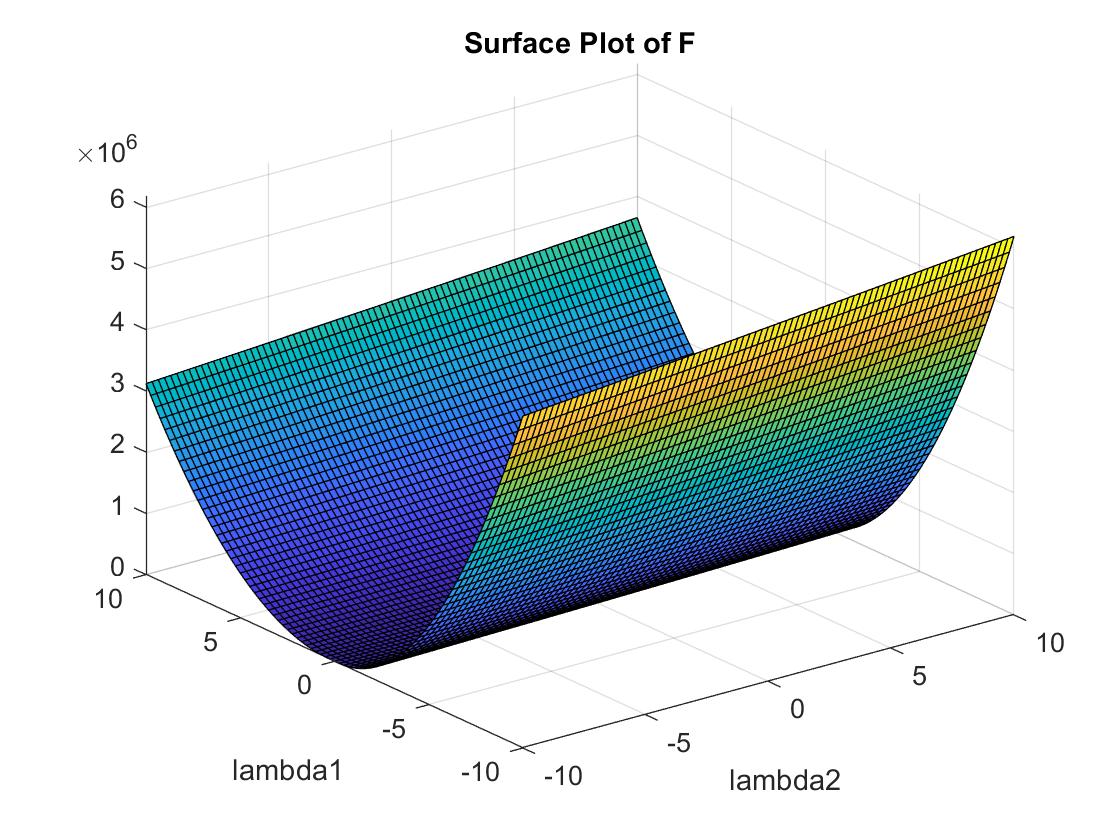}}}%
\end{figure}

\begin{figure}[!htb]
	\centering
	\caption{Investment Portfolio}%
	\subfloat{\scalebox{0.70}[0.70]{\includegraphics{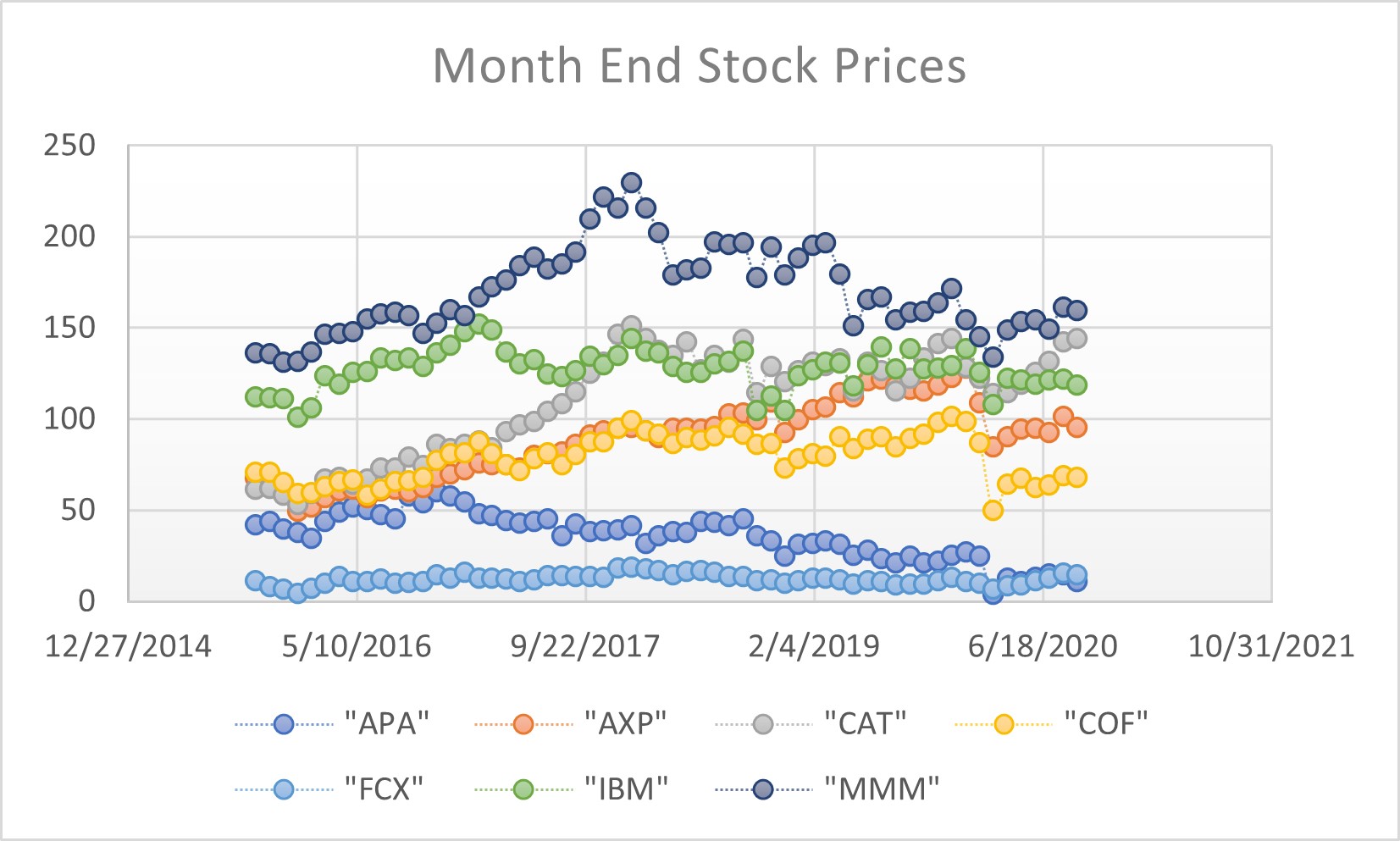}}}%
	\quad
	\subfloat{\scalebox{0.70}[0.70]{\includegraphics{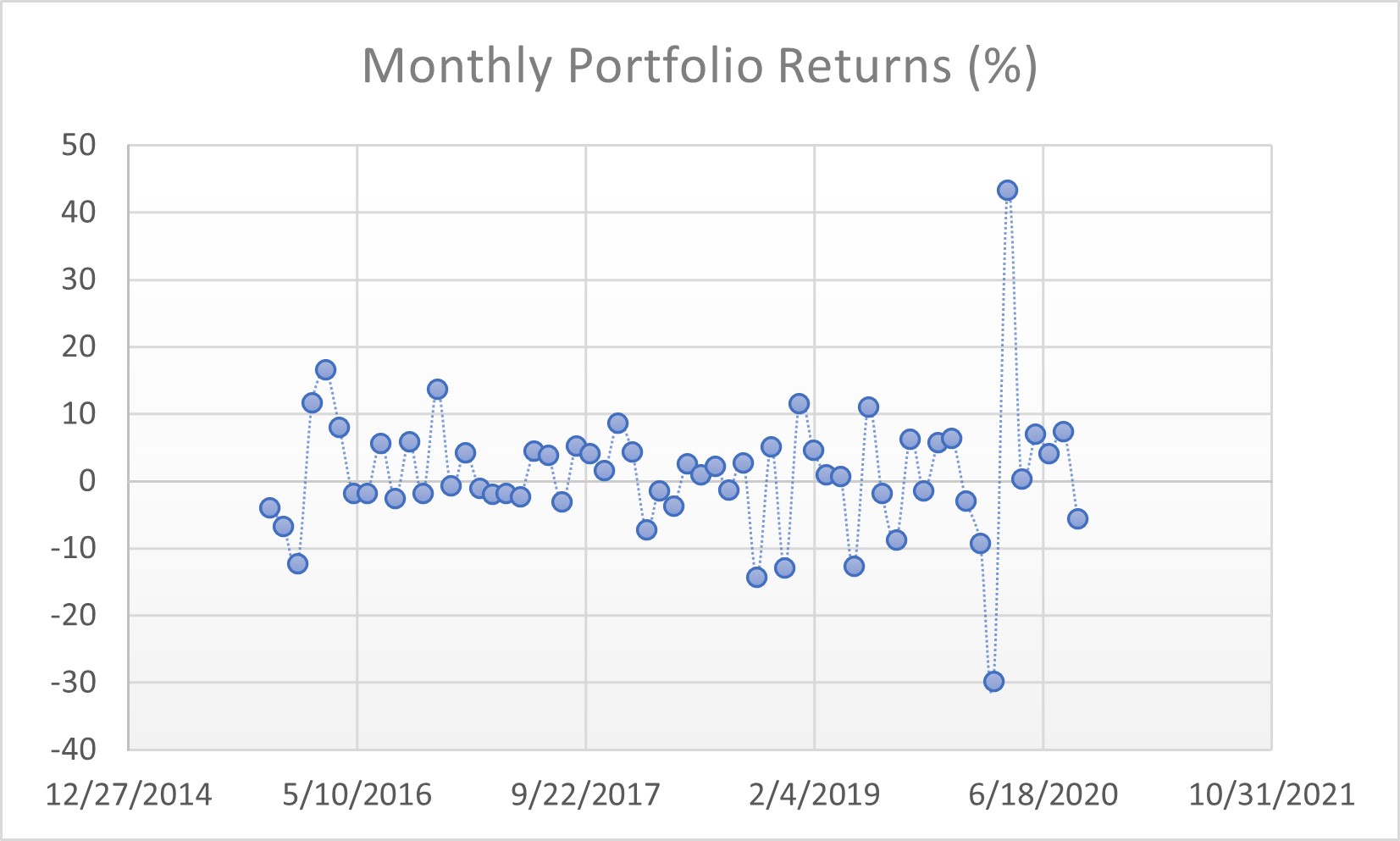}}}%
\end{figure}

\begin{figure}[H]
\caption{Semideviation of Portfolio Returns (\%)}
\begin{center}
\scalebox{0.9}{
\begin{tikzpicture}
	\begin{axis}[legend pos=north west,
		xlabel=Delta,
		ylabel=Semideviation (\%),
		ymin = 0, ymax = 16,
		axis y line* = left ]
	\addplot[color=magenta,mark=x] coordinates {
		(1,6.82)
		(10,7.83)
		(30,8.63)
		(60,9.12)
		(100,9.35)
		(150,9.42)
		(175,9.428)
	}; \label{plot3_y1}
	\end{axis}

	\begin{axis}[legend pos=north west,
		xlabel=Delta,
		ylabel=Semideviation (\%),
		ymin = 0, ymax = 16,
		axis y line* = right,
		axis x line = none ]
	\addplot[color=cyan,mark=*] coordinates {
		(1,7.49)
		(10,8.34)
		(30,8.98)
		(60,9.31)
		(100,9.39)
		(150,9.42)
		(160,9.426)
	}; \label{plot3_y2}

	\addplot[color=green,mark=+] coordinates {
		(1,9.43)
		(10,9.43)
		(30,9.43)
		(60,9.43)
		(100,9.43)
		(150,9.43)
		(160,9.43)
	}; \label{plot3_y3}

    \addlegendimage{/pgfplots/refstyle=plot3_y1}\addlegendentry{USPM}
	\addlegendimage{/pgfplots/refstyle=plot3_y2}\addlegendentry{CSPM}
	\addlegendimage{/pgfplots/refstyle=plot3_y3}\addlegendentry{LSPM}
	\end{axis}
\end{tikzpicture}
}
\end{center}
\end{figure}
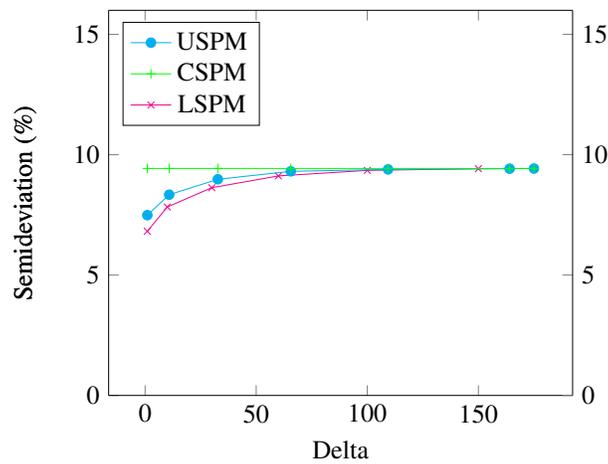

\section{Conclusions and Further Work}
This work has developed theoretical results and investigated calculations of univariate DRMPs using Wasserstein distance as an ambiguity measure. The moments problem overview and foundational notation and problem definitions were introduced in Section 1. Using problem of moments duality results, the simpler dual formulation and its mixture of analytic and computational solutions were derived in Section 2. In Section 3, we developed a computational approach (the spherical method) to solve these DRMPs in a simpler way. In Section 4, we applied our results to particular problem instances in inventory control and option pricing (univariate setting). Finally, we conclude with some commentary on directions for further research. \par
One direction for future research would be to investigate DRMPs in a multivariate setting, using the tools of SDP. Another direction for future research would be to extend the methods developed in Sections 2 and 3 to address additional moments problems beyond Cheybyshev-Cantelli or the first two partial moments. Finally, perhaps a third direction for future research would be to investigate extensions of the distributionally robust framework to compute worst case distributions and/or incorporate a decision problem. \par

\section*{Data and Code Availability Statement}
The raw and/or processed data, as well as the Matlab code, required to reproduce the findings from this research can be obtained from the corresponding author, [D.S.], upon reasonable request.

\section*{Conflict of Interest Statement}
The authors declare they have no conflict of interest.

\section*{Funding Statement}
The authors received no specific funding for this work.

\bibliographystyle{apalike}
\bibliography{DRMP}

\end{document}